\newcommand\munderbar[1]{%
  \underaccent{\bar}{#1}}
\newtheorem{theorem}{Theorem}
\newtheorem{proposition}{Proposition}
\newtheorem{lemma}{Lemma}
\newtheorem{definition}{Definition}
\newtheorem{remark}{Remark}
\newtheorem{corollary}{Corollary}
\newtheorem{hypothesis}{Hypothesis}
\def\balpha{\boldsymbol{\alpha}}
\def\balpha{\boldsymbol{\alpha}}
\def\bphi{\boldsymbol{\phi}}
\def\cG{\mathcal G}
\def\cH{\mathcal H}
\def\cP{\mathcal P}
\def\cT{\mathcal T}
\begin{document}

\renewenvironment{proof}{\noindent{\bf Proof.}\hspace*{1em}}{\qed\bigskip\\}
\newenvironment{proof-sketch}{\noindent{\bf Sketch of Proof.}\hspace*{1em}}{\qed\bigskip\\}

\title{Finite State Mean Field Games with Major and Minor Players}
\author{Rene Carmona and Peiqi Wang\\
Department of Operations Research and Financial Engineering\\
Princeton University}
\maketitle

\begin{abstract}
The goal of the paper is to develop the theory of finite state mean field games with major and minor players when the state space of the game is finite. We introduce the finite player games and derive a mean field game formulation in the limit when the number of minor players tends to infinity. In this limit, we prove that the value functions of the optimization problems are viscosity solutions of PIDEs of the HJB type, and we construct the best responses for both types of players. From there, we prove existence of Nash equilibria under reasonable assumptions.
Finally we prove that a form of propagation of chaos holds in the present context and use this result to prove existence of approximate Nash equilibria for the finite player games from the solutions of the mean field games. this vindicate our formulation of the mean field game problem.
\end{abstract}

\section{Introduction}
Mean field games with major and minor players were introduced to accommodate the presence of subgroups of players whose influence on the behavior of the remaining population does not vanish in the asymptotic regime of large games. In this paper we develop the theory of
these dynamic games  when the optimization problems faced by the players are over the dynamics of continuous time 
controlled processes with values in finite state spaces. The theory of finite state mean field games for a single homogeneous population of players was introduced in \cite{Gueant_tree,Gueant_congestion} and  \cite{GomesMohrSouza_continuous}. The interested reader may also consult Chapter 7 of the book \cite{CarmonaDelarue_book_I} for a complete presentation of the theory. 
The present paper is concerned with the extension to models with major and minor players. We search for closed loop Nash equilibria and for this reason, we use the approach which was advocated in \cite{CarmonaWang_LQ}, and called \emph{an alternative approach} in Chapter 13 of \cite{CarmonaDelarue_book_II}. 

\vskip 2pt
Our interest in mean field games with major and minor players when the state space is finite was sparked by the four state model \cite{KolokoltsovBensoussan} for the behavior of computer owners facing cyber attacks. Even though the model was not introduced and treated as a game with major and minor players, clearly, it is of this type if the behaviors of the attacker and the targets are strategic.
Practical applications amenable to these models abound and a better theoretical understanding of their structures should lead to sorely needed numerical procedures to compute Nash equilibria.

Early forms of mean field games with major and minor players appeared in \cite{Huang} in an infinite-horizon setting, in \cite{NguyenHuang1} for finite time horizons, and \cite{NourianCaines} offered a first generalization to non linear-quadratic cases. In these models, the state of the major player does not enter the dynamics of the states of the minor players: it only appears in their cost functionals. this was remedied in \cite{NguyenHuang2}  for linear quadratic models. 
The asymmetry between major and minor players was emphasized in \cite{BensoussanChauYam}
where the authors insist on the fact that the statistical distribution of the state  of a generic minor player should be derived endogenously.  Like \cite{NourianCainesMalhame}, \cite{BensoussanChauYam} characterizes the limiting problem by a set of stochastic partial differential equations. 
However, \cite{BensoussanChauYam} seems to be solving a Stackelberg game, and only the population of minor players ends up in a Nash equilibrium. this is in contrast with \cite{CarmonaZhu} which also insists on the endogenous nature of the statical distribution of the state of a generic minor player, but which formulates the search for a mean field equilibrium as the search for a Nash equilibrium in a two player game 
over the time evolutions of states, some of which being of a McKean-Vlasov type. The recent technical report \cite{JaimungalNourian} adds a major player to the particular case (without idiosyncratic random shocks) of extended mean field game model of optimal execution introduced in Chapter 1 and solved in  Chapter 4 of \cite{CarmonaDelarue_book_I}.

In this paper, we cast the search for Nash equilibria as a search for fixed points of the best response function constructed from the optimization problems of both types of players. 
Typically, in a mean field game with major and minor players, the dynamics of the state $X^0_t$ of the major player (as well as its costs) depend upon the statistical distribution $\mu_t$ of the state $X_t$ of a generic minor player. Throughout the paper we consider that the players are gender neutral and we use "its" instead of "his" or "her". Alternatively,  the dynamics of the state $X_t$ of a generic minor player (as well as its costs) depend upon the values of the state $X^0_t$ and the control $\alpha^0_t$ of the major player as well as the statistical distribution $\mu_t$ which captures the mean field interactions between the minor players. In this paper, we prove that he processes $(X_t^0, \mu_t)$ and $(X_t^0, X_t, \mu_t)$ are Markovian and we characterize their laws by their infinitesimal generators.
We start from the finite player version of the model and show convergence when the number of minor players goes to infinity. We rely on standard results on the convergence of Markov semigroups.
Note that the control of the major player implicitly influences $\mu_t$ through the major player's state, so the major player's optimization problem should be treated as an optimal control problem for McKean-Vlasov dynamics. On the other hand, for the representative minor player's problem, we are just dealing with a classical Markov decision problem in continuous time.
this allows us to adapt to the finite state space the approach introduced in \cite{CarmonaWang_LQ} and reviewed in Chapter 13 of \cite{CarmonaDelarue_book_II}, to define and construct Nash equilibria. We emphasize that these are Nash equilibria for the whole system \emph{Major + Minor Players} and not only for the minor players. this is fully justified by our results on the propagation of chaos and their applications to the proof that our mean field game equilibria provide approximate Nash equilibria for finite games, including both major and minor players.

\vskip 4pt
The paper is structured as follows. Games with finitely many minor players and a major player are introduced in Section \ref{se:finite} where we explain the conventions and notations we use to describe continuous time controlled Markov processes in finite state spaces. We also identify the major and minor players by specifying the information structures available to them, the types of actions they can take, and the costs they incur.
The short and non-technical Section \ref{se:mfg} describes the mean field game strategy and emphasizes the steps needed in the search for Nash equilibria for the system. this is in contrast with some earlier works where the formulation of the problem lead to Stackelberg equilibria, only the minor players being in an approximate Nash equilibrium. To keep with the intuition that the mean field game strategy is to implement a form of limit when the number of minor players grows to infinity, Section \ref{se:convergence} considers the convergence of the state Markov processes in this limit, and identifies the optimization problems which the major and minor players need to solve in order to construct their best responses. this leads to the formalization of the search for a mean field equilibrium as the search of fixed points for the best response map so constructed. The optimization problems underpinning the definition of the best response map are studied in Section \ref{se:optimizations}. There, we use dynamic programming to prove that the value functions of these optimization problems are viscosity solutions of HJB type Partial Integro - Differential Equations (PIDEs). Section \ref{se:Nash} proves existence of the best response map and of Nash equilibria under reasonable conditions. Next, Section \ref{se:master} gives a verification theorem based on the existence of a classical solution to the master equation.
The longer Section \ref{se:chaos} proves that the solution of the mean field game problem provides approximate Nash equilibria for the finite player games. this vindicates our formulation as the right formulation of the problem if the goal is to find Nash equilibria for the system including both major and minor players. The strategy of the proof in by now standard in the literature on mean field games. It relies on propagation of chaos results. However, the latter are usually derived for stochastic differential systems with mean field interactions, and because we could not find the results we needed in the existing literature, we provide proofs of the main steps of the derivations of these results in the context of controlled Markov evolutions in finite state spaces. Finally, an appendix provides the proofs of some of the technical results we used in the text.

\section{Game Model with Finitely Many Players}
\label{se:finite}

We consider a stochastic game in continuous time, involving a major player indexed by $0$, and $N$ minor players indexed from $1$ to $N$. The states of all the players $X_t^0, X_t^1, \dots, X_t^N$ are described by a continuous-time finite-state Markov process. Let us denote $\{1,2,\dots,M^0\}$ the set of possible states of the major player, and $\{1,2,\dots,M\}$ the set of possible states of the minor players. We introduce the empirical distribution of the states of the minor players at time $t$:
\[
\mu_t^N = [\frac{1}{N}\sum_{n=1}^N \mathbbm{1}(X_t^n = 1), \frac{1}{N}\sum_{n=1}^N \mathbbm{1}(X_t^n = 2), \dots, \frac{1}{N}\sum_{n=1}^N \mathbbm{1}(X_t^n = M-1)]
\]
We denote by $\mathcal{P}$ the $(M-1)$ - dimensional simplex:
\[
\mathcal{P} := \{x\in\mathbb{R}^{M-1} | x_i \ge 0, \sum x_i \le 1\}.
\]
Obviously, $\mu_t^N\in\cP$. We consider continuous-time Markov dynamics according to which the rates of jump, say $q$, of the state of a generic minor player depends upon the value of its control, the empirical distribution of the states of all the minor players, as well as the major player's control and state. We denote by $A_0$ (resp. $A$) a convex set in which the major player (resp. all the minor players) can choose their controls. So we introduce a function $q$:
\[
[0,T]\times \{1,\dots, M\} ^2 \times A\times \{1, \dots, M^0\}\times A_0 \times \mathcal{P} \ni (t,i,j,\alpha,i^0,\alpha^0,x)\rightarrow  q(t,i,j,\alpha, i^0, \alpha^0, x)
\]
and we make the following assumption on $q$:

\begin{hypothesis}
\label{transrateminor}
For all $(t, \alpha, i^0,\alpha^0, x) \in [0,T]\;\times\;A\;\times\;\{1,\dots, M^0\}\;\times\;A_0\;\times\;\mathcal{P}$, the matrix $[q(t,i,j,\alpha, i^0, \alpha^0,x)]_{1\le i,j\le M}$ is a Q-matrix.
\end{hypothesis}
\noindent
Recall that a matrix $Q=[Q(i,j)]_{i,j}$ is said to be a Q-matrix if $Q(i,j)\ge 0$ for $i\ne j$ and 
$$
\sum_{j \neq i} Q(i,j) = -Q(i,i),\qquad \text{for all } i.
$$

\noindent Then we assume that at time $t$ if the state of minor player $n$ is $i$, this state will jump from $i$ to $j$ at a rate given by:
\[
q(t, i, j, \alpha^n_t, X_t^0, \alpha^0_t, \mu_t^N)
\]
if $X_t^0$ is the major player state, $\alpha^0_t\in A_0$ is the major player control, $\alpha_t^n \in A$ is the $n$-th minor player control and $\mu_t^N \in\mathcal{P}$ is the empirical distribution of the minor player's states. Our goal is to use these rates to completely specify the law of a continuous time  process in the following way: if at time $t$ the $n$-th minor player is in state $i$ and uses control $\alpha^n_t$, if the major player is in state $X_t^0$ and uses the control $\alpha^0_t$, and if the empirical distribution of the states of the population of minor players is $\mu_t^N$, then the probability of player $n$ remaining in the same state during the infinitesimal time interval $[t,t+\Delta t)$ is $[1 + q(t,i,i,\alpha_t^n, X_t^0,\alpha^0_t, \mu_t^N) \Delta t + o(\Delta t)]$, whereas the probability of this state changing to another state $j$ during the same time interval is given by $[q(t,i,j,\alpha_t^n, X_t^0,\alpha^0_t, \mu_t^N) \Delta t + o(\Delta t)]$.

\vskip 4pt
Similarly, to describe the evolution of the state of the major player we introduce a function $q^0$:
\[
[0,T]\times \{1,\dots, M^0\} ^2 \times A_{0}\times \mathcal{P} \ni (t,i^0,j^0,\alpha^0,x)\rightarrow  q^0(t,i^0,j^0,\alpha^0, x)
\]
which satisfies the following assumption:
\begin{hypothesis}\label{transratemajor}
For each $(t,\alpha^0, x) \in [0,T]\times A_{0}\times\mathcal{P}$, $[q^0(t,i^0,j^0,\alpha^0, x)]_{1\le i^0,j^0\le M^0}$ is a Q-matrix.
\end{hypothesis}

\noindent So if at time $t$ the state of the major player is $i^0$, its control is $\alpha^0_t\in A_0$, and the empirical distribution of the states of the minor players is $\mu_t^N$, we assume the state of the major player will jump to state $j^0$ at rate $q^0(t, i^0, j^0, \alpha^0_t, \mu_t^N)$.

\vskip 4pt
We now define the  control strategies which are admissible to the major and minor players. In our model, we assume that the major player can only observe its own state and the empirical distribution of the states of the minor players, whereas each minor player can observe its own state, the state of the major player as well as the empirical distribution of the states of all the minor players. Furthermore, we only allow for Markov strategies given by feedback functions. Therefore the major player's control should be of the form $\alpha^0_t = \phi^0(t, X_t^0, \mu_t^N)$ for some feedback function $\phi^0:[0,T]\times\{1,\cdots,M^0\}\times\cP\mapsto A_0$, and the control of minor player $n$ should be of the form $\alpha_t^n = \phi^n(t, X_t^n, X_t^0. \mu_t^N)$ for some feedback function $\phi^n:[0,T]\times\{1,\cdots,M\}\times\{1,\cdots,M^0\}\times\cP\mapsto A$. We denote the sets of admissible control strategies by $\AA^0$ and $\AA^n$ respectively. Depending upon the application, we may add more restrictive conditions to the definitions of these sets of admissible control strategies.

\vspace{3mm}
We now define the joint dynamics of the states of all the players. We assume that conditioned on the current state of the system, the changes of states are independent for different players. this means that for all $i^0, i^1, \dots, i^N$ and $j^0, j^1, \dots, j^N$, where $i^0, j^0 \in \{1,2,\dots, M^0\}$ and $i^n, j^n \in \{1,2,\dots, M\}$ for $n=1,2,\dots, N$, we have:
\begin{align*}
&\mathbb{P}[X_{t+\Delta t}^0 = j^0, X_{t+\Delta t}^1 = j^1, \dots, X_{t+\Delta t}^N = j^N | X_t^0 = i^0, X_t^1 = i^1, \dots, X_t^N = i^N]\\
&\hskip 35pt
:= [\mathbbm{1}_{i^0 = j^0} + q^0(t, i^0, j^0, \phi^0(t, i^0, \mu_t^N), \mu_t^N)\Delta t + o(\Delta t)]\\
&\hskip 55pt
 \times \prod_{n=1}^N [\mathbbm{1}_{i^n = j^n} + q^n(t, i^n, j^n, \phi^n(t, i^n, i^0, \mu_t^N), i^0, \phi^0(t,i^0, \mu_t^N), \mu_t^N)\Delta t + o(\Delta t)]
\end{align*}
Formally, this statement is equivalent to the definition of the Q-matrix , say $Q^{(N)}$ of the continuous-time Markov chain $(X_t^0, X_t^1, X_t^2, \dots X_t^N)$. The state space of this Markov chain is the Cartesian product of each player's state space. Therefore $Q^{(N)}$ is a square matrix of size $M^0 \cdot M^N$. The non-diagonal entry of $Q^{(N)}$ can be found by simply retaining the first order term in $\Delta t$ when expanding the above product of probabilities. Because we assume that the transitions of states are independent among the individual players, $Q^{(N)}$ is a sparse matrix.

Each individual player aims to minimize its expected cost in the game. We assume that these costs are given by:
\begin{align*}
J^{0,N}(\alpha^0, \alpha^1,\dots, \alpha^N ) :=\;\;&\; \mathbb{E}\left[\int_{0}^{T}f^0(t,X_t^0, \phi^0(t, X_t^0, \mu_t^N),  \mu_t^N) dt + g^0(X_T^0, \mu_T^N)\right]\\
J^{n,N}(\alpha^0, \alpha^1,\dots, \alpha^N ) :=\;\;&\; \mathbb{E}\left[\int_{0}^{T}f^n(t, X_t^n,  \phi^n(t, X_t^n, X_t^0, \mu_t^N),X_t^0, \phi^0(t, X_t^0, \mu_t^N),  \mu_t^N) dt + g^n(X_T^n, X_T^0, \mu_T^N)\right].
\end{align*}
In this paper, we focus on the special case of symmetric games, for which all the minor players share the same transition rate function and cost function, i.e.: $q^n:=q$, $f^n:=f$, $g^n:=g$, $J^{n,N}:=J^N$, and we search for symmetric Nash equilibria. We say that a couple of feedback functions $(\phi^0,\phi)$ form a symmetric Nash equilibrium if the controls $(\balpha^0,\balpha^1,\cdots,\balpha^N)$ given by $\alpha^0_t=\phi^0(t,X^0_t,\mu^N_t)$ and 
$\alpha^n_t=\phi(t,X^n_t,X^0_t,\mu^N_t)$ for $n=1,\cdots, N$, form a Nash equilibrium in the sense that:
\begin{align*}
J^{0,N}(\balpha^0, \balpha^1,\dots, \balpha^N) \le\;\;& J^{0,N}(\balpha', \balpha^1,\dots, \balpha^N)\\
J^{N}(\balpha^0, \balpha^1,\dots, \balpha^n, \dots \balpha^N) \le\;\;& J^{N}(\balpha^0, \balpha^1,\dots, \balpha^{'n}, \dots \balpha^N)
\end{align*}
for any choices of alternative admissible controls $\balpha^{'0}$ and $\balpha^{'n}$ of the forms $\alpha^{'0}_t=\phi^{'0}(t,X^0_t,\mu^N_t)$ and 
$\alpha^{'n}_t=\phi'(t,X^n_t,X^0_t,\mu^N_t)$.
In order to simplify the notation, we will systematically use the following notations when there is no risk of possible confusion. When $\balpha^0\in\mathbb{A}^0$ is given by a feedback function $\phi^0$ and $\balpha\in\mathbb{A}$ is given by a feedback fucntion $\phi$, we denote by $q^0_{\phi^0}$, $q_{\phi^0,\phi}$, $f^0_{\phi^0}$ and $f_{\phi^0,\phi}$ the functions:
\begin{align*}
q^{0}_{\phi^0}(t,i^0,j^0,x) :=&\;\; q^0(t,i^0,j^0, \phi^0(t,i^0,x),x)\\
q_{\phi^0,\phi}(t,i,j,i^0,x) :=&\;\; q(t,i,j,\phi(t,i,i^0,x),i^0,\phi^0(t,i^0,x), x)\\
f^0_{\phi^0}(t,i^0,x) := &\;\;f^0(t,i^0, \phi^0(t,i^0,x),x)\\
f_{\phi^0,\phi}(t,i^0,i,x) := &\;\;f(t,i,\phi(t,i,i^0,x),i^0,\phi^0(t,i^0,x),x)
\end{align*}

\section{Mean Field Game Formulation}
\label{se:mfg}
Solving for Nash equilibria when the number of players is finite is challenging. There are many reasons why the problem becomes quickly intractable. Among them is the fact that as the number of minor players increases, the dimension of the Q - matrix of the system increases exponentially. The paradigm of Mean Field Games consists in the analysis of the limiting case where the number $N$ of minor players  tends to infinity.  In this asymptotic regime, one expects that simplifications due to averaging effects will make it easier to find asymptotic solutions which could provide approximative equilibria for finite player games when the number $N$ of minor players is large enough. The rationale for such a belief is based on the intuition provided by classical results on the propagation of chaos for large particle systems with mean field interactions.
We developed these results later in the paper.

\vskip 4pt
The advantage of considering the limit case is two-fold. First, when $N$ goes to infinity, the empirical distribution of the minor players' states converges to a random measure $\mu_t$ which we expect to be the conditional distribution of any minor player's state, i.e.: 
\[
\mu_t^N \rightarrow \mu_t := \bigl(\mathbb{P}[X_t^n = 1 | X_t^0], \mathbb{P}[X_t^n = 2 | X_t^0], \dots, \mathbb{P}[X_t^n = M-1 | X_t^0]\bigr).
\]
As we shall see later on in the next section, when considered together with the major player's state and one of the minor player's state, the resulting process is Markovian and its infinitesimal generator has a tractable form. Also, when the number of minor players goes to infinity, small perturbations of a single minor player's strategy will have no significant influence on the distribution of minor player's states. this gives rise to a simple formulation of the typical minor player's search for the best response to the control choices of the major player. In the limit $N\to\infty$, we understand a Nash equilibrium as a situation in which neither the major player, nor a typical minor player could be better off by changing control strategy. In order to formulate this limiting problem, we need to define the joint dynamics of the states of the major player and a representative minor player, making sure that the dynamics of the state of the major player depend upon the statistical distribution of the states of the minor players, and that the dynamics of the state of the representative minor player depend upon the values of the state and the control of the major player, its own state, and the statistical distribution of the states of all the minor players.

As argued in \cite{CarmonaWang_LQ}, and echoed in Chapter 13 of \cite{CarmonaDelarue_book_II}, the best way to search for
Nash equilibria in the mean field limit of games with major and minor players is first to identify the best response map 
of the major and a representative of the minor players by solving the optimization problems for the strategies of 1) the major player in response to the field of minor players as represented by a special minor player with special state dynamics which we call a representative minor player, and 2)  the representative minor player in response to the behavior of the major player and the other minor players. Solving these optimization problems separately provides a definition of the best response map for the system. One can then search for a fixed point for this best response map. So the search for Nash equilibria for the mean field game with major and minor players can be summarized in the following two steps.

\vskip 6pt\noindent
\textbf{Step 1} (Identifying the Best Response Map) 

\vskip 6pt\noindent
\textbf{1.1} (Major Player's Problem) 
\vskip 1pt\noindent
Fix an admissible strategy $\balpha\in\mathbb{A}$
of the form $\alpha_t = \phi(t, X_t, X^0_t, \mu_t)$ for the representative minor player, solve for the optimal control problem of the major player given that all the minor players use the feedback function $\phi$. We denote by $\bphi^{0,*}(\phi)$ the feedback function giving the optimal strategy of this optimization problem.

Notice that, in order to formulate properly this optimization problem, we need to define Markov dynamics for the couple $(X^0_t,X_t)$ where $X_t$ is interpreted as the state of a representative minor player, and the (random) measure $\mu_t$ has to be defined clearly. this is done in the next section as the solution of the major player optimization problem, the Markovian dynamics being obtained from the limit of games with $N$ minor players. 

\vskip 6pt\noindent
\textbf{1.2}  (Representative Minor Player's Problem) 
\vskip 1pt\noindent
We first single out a minor player and we search for its best response to the rest of the other players. So we
fix an admissible strategy $\balpha^0\in \mathbb{A}^0$ of the form $\alpha^0_t= \phi^0(t, X_t^0, \mu_t)$ for the major player,
and an admissible strategy $\balpha\in \AA$ of the form $\alpha_t= \phi(t, X_t, X_t^0, \mu_t)$ for the representative of the remaining minor players.
We then assume that the minor player which we singled out responds to the other players by choosing 
an admissible strategy $\bar\balpha\in \AA$ of the form $\bar\alpha_t= \bar\phi(t,\o X_t, X_t^0, \mu_t)$. 
Clearly, if we want to find the best response of the singled out minor player to the behavior of the major player and the field of the other minor players as captured by the behavior of the representative minor player, we need to define Markov dynamics for the triple $(X^0_t,\o X_t,X_t)$, and define clearly what we mean by the (random) measure $\mu_t$. this is done in the next section as the solution of the representative minor player optimization problem, the Markovian dynamics being obtained from the limit of games with $N$ minor players.  
We denote by $\bphi^{*}(\phi^0,\phi)$ the feedback function giving the optimal strategy of this optimization problem.

\vskip 6pt\noindent
\textbf{Step 2} (Search for a Fixed Point of the Best Response Map)
\vskip 1pt\noindent
A Nash equilibrium for the mean field game with major and minor players is a fixed point $[\hat\phi^{0},\hat\phi] = [\bphi^{0,*}(\hat\phi), \bphi^*(\hat\phi^0, \hat\phi)]$.

\vskip 4pt
Clearly, in order to take Step 1, we need to formulate properly the search for these two best responses, and study the limit $N\to\infty$ of both cases of interest.

\section{Convergence of Large Finite Player Games}
\label{se:convergence}

Throughout the rest of the paper, we make the following assumptions on the regularity of the transition rate and cost functions:

\begin{hypothesis}\label{lipassump}
There exists a constant $L>0$ such that for all $i,j\in\{1,\dots, M\}$, $i^0, j^0 \in \{1, \dots, M^0\}$ and all $t,t'\in[0,T]$, $x,x'\in \mathcal{P}$, $\alpha^0, \alpha^{0 '} \in A_0$ and $\alpha, \alpha'\times A$, we have:
\begin{align*}
&|(f,f^0,g,g^0, q, q^0)(i,j,i^0, j^0, t,x,\alpha^0, \alpha) - (f,f^0,g,g^0, q, q^0)(i,j,i^0, j^0, t',x',\alpha^{0'}, \alpha')|\\
&\hskip 35pt
\le L (|t-t'| + \|x-x'\| + \| \alpha^0 - \alpha^{0'}\| + \|\alpha -\alpha'\|)
\end{align*}
\end{hypothesis}

\begin{hypothesis}\label{boundassump}
There exists a constant $C>0$ such that for all $i,j\in\{1,\dots, M\}$, $i^0\in \{1, \dots, M^0\}$ and all $t\in[0,T]$, $x\in \mathcal{P}$, $\alpha^0\in A_0$ and $\alpha\in A$, we have:
\[
|q(t,i,j,\alpha,i^0, \alpha^0, x)|\le C
\]
\end{hypothesis}

Finally, we add a boundary condition on the Markov evolution of the minor players. Intuitively speaking, this assumption rules out extinction: it says that a minor player can no longer change its state, when the percentage of minor players who are in the same state falls below a certain threshold.
\begin{hypothesis}
\label{boundaryassump}
There exists a constant $\epsilon>0$ such that for all $t\in[0,T]$, $i,j \in \{1,\dots, M-1\}, i\neq j$ and $\alpha^0\in A_0$ and $\alpha\in A$, we have:
\begin{align*}
x_i < \epsilon \implies&\;\; q(t,i,j,\alpha, i^0, \alpha^0, x) = 0\\
1 - \sum_{k=1}^{M-1} x_k < \epsilon \implies&\;\; q(t,M,i,\alpha, i^0, \alpha^0, x) = 0.
\end{align*}
\end{hypothesis}
\noindent
The purpose of this section is to identify the state dynamics which should be posited in the formulation of the mean field game problem with major and minor players.  In order to do so, we formulate the search for the best response of each player by first setting the game with finitely many minor players, and then letting the number of minor players go to $\infty$ to identify the dynamics over which the best response should be computed in the limit.

\subsection{Major Player's Problem with Finitely Many Minor Players}

For any integer $N$ (fixed for the moment), we consider a game with $N$ minor players, and we compute the best response of the major player when the minor players choose control strategies $\balpha^n=(\alpha^n_t)_{0\le t\le T}$ given by the same 
feedback function $\phi$ so that $\alpha^n_t = \phi(t, X_t^{n,N}, X^{0,N}_t, \mu^N_t)$ for $n=1,\cdots,N$. 
Here $X_t^{n,N}$ denotes the state of the $n$-th minor player at time $t$,  $X^{0,N}_t$ the state of the major player, and $\mu^N_t$ the empirical distribution of the states of the $N$ minor players at time $t$. The latter is a probability measure on the state space 
$E=\{1,\cdots,M\}$, and for the sake of convenience, we shall identify it with the element:
\[
\mu_t^N = \frac{1}{N}\bigg(\sum_{n=1}^N \mathbbm{1}(X_t^{n,N} = 1), \sum_{n=1}^N \mathbbm{1}(X_t^{n,N} = 2), \dots, \sum_{n=1}^N \mathbbm{1}(X_t^{n,N} = M-1)\bigg)
\]
of the simplex. So for each $i\in E$, $\mu_t^N(i)$ is the proportion of minor players whose state at time $t$ is equal to $i$. Consequently, for $N$ fixed,
$\mu^N_t$ can be viewed as an element of the finite space $\{0,1/N,\cdots,(N-1)/N,1\}^{M-1}$. For the sake of definiteness, we denote by 
$\cP^N$ the set of possible values of $\mu^N_t$, in other words, we set:
\[
\mathcal{P}^N:=\bigl\{\frac{1}{N}(n_1, n_2, \dots n_{M-1}) ;\; n_i \in \mathbb{N}, \sum_i n_i \le N\bigr\}.
\]
Given the choice of control strategies made by the minor players, we denote by $\balpha^{0}=(\alpha^0_t)_{0\le t\le T}$ the control strategy of the major player, and we study the time evolution of the state of the system given these choices of control strategies. Later on, we shall find the optimal choice for major player's controls $\balpha^{0}$ given by feedback functions $\phi^0$ in response to the choice of the feedback function $\phi$ of the minor players. While this optimization should be done over the dynamics of the whole state $(X^{0,N}_t,X^{1,N}_t,\cdots,X^{N,N}_t)$, we notice that the process 
$(X^{0,N}_t,\mu^N_t)_{0\le t\le T}$ is sufficient to define the optimization problem of the major player, and that it is also a continuous time Markov process in the finite state space $\{1,\dots,M^0\}\times\mathcal{P}^N$.

\vskip 4pt
Our goal is to show that as $N\to\infty$, the Markov process $(X_t^{0,N}, \mu_t^N)_{0\le t\le T}$ converges in some sense to a Markov process $(X_t^0, \mu_t)_{0\le t\le T}$. This will allow us to formulate the optimization problem of the major player in the mean field limit in terms of this limiting Markov process.

\vskip 4pt
For each integer $N$, we denote by $\cG^{0,N}_{\phi^0, \phi}$ the infinitesimal generator of the Markov process $(X_t^{0,N}, \mu_t^N)_{0\le t\le T}$. Since the process is not time homogeneous, when we say infinitesimal generator, we mean the infinitesimal generator of the space-time process $(t,X_t^{0,N}, \mu_t^N)_{0\le t\le T}$. Except for the partial derivative with respect to time, this infinitesimal generator is given by the Q-matrix of the process, namely the instantaneous rates of jump in the state space $\{1,\dots,M^0\}\times\mathcal{P}^N$. So if 
$F: [0,T] \times \{1, 2, \dots, M^0\} \times \mathcal{P}^N\rightarrow \mathbb{R}$ is $C^1$ in time, 
\begin{equation}
\label{generatorNmajor}
\begin{aligned}
[\cG^{0,N}_{\phi^0, \phi} F] (t, i^0, x)=&\;\partial_t F(t, i^0, x)+ \sum_{j_0 \neq i_0} \bigl(F(t, j^0, x) - F(t, i^0, x)\bigr)   q^0_{\phi^0}(t, i^0, j^0, x)\\
&\; + \sum_{j\neq i}\bigl(F(t, i^0, x + \frac{1}{N}e_{ij} ) -F(t, i^0, x )\bigr) N x_i  q_{\phi^0,\phi}(t, i, j, i^0, x),
\end{aligned}
\end{equation}
where the first summation in the right hand side corresponds to jumps in the state of the major player and the terms in the second summation account for the jumps of the state of one minor player from $i$ to $j$. Here we code the change in the empirical distribution $x$ of the states of the minor players 
caused by the jump from $i\in\{1,\cdots,M\}$ to $j\in\{1,\cdots,M\}$ with $j\ne i$, of the state of a single minor player as $(1/N)e_{ij}$ with the notation $e_{ij} :=  e_j \mathbbm{1}_{j\neq M} - e_i \mathbbm{1}_{i\neq M}$ where $e_i$ stands for the $i$-th vector in the canonical basis of the space $\mathbb{R}^{M-1}$. We have also used the notation $x_n = 1 - \sum_{i=1}^{n-1} x_i$ for sake of simplicity.

\vskip 4pt
Notice that the two summations appearing in \eqref{generatorNmajor} correspond to finite difference operators which are bounded. So the domain of the operator $\cG^{0,N}_{\phi^0, \phi} $ is nothing else than the domain of the partial derivative with respect to time.
Notice also that the sequence of generators ${\cG}^{0,N}_{\phi^0, \phi}$ converges, at least formally, toward a limit which can easily be identified. Indeed, it is clear from the definition \eqref{generatorNmajor} that $[\cG^{0,N}_{\phi^0, \phi} F] (t, i^0, x)$ still makes sense if $x\in\cP$, where $\cP$ is the $M-1$ dimensional simplex. 
Moreover, if $F: [0,T] \times \{1, 2, \dots, M^0\} \times \mathcal{P}\rightarrow \mathbb{R}$ is $C^1$ in both variables $t$ and $x$, we have $[\cG^{0,N}_{\phi^0,  \phi}  F] (t,i^0,x) \rightarrow [\cG^{0}_{\phi^0,  \phi}  F](t,i^0,x)$ defined by:
\begin{align*}
[\cG^{0}_{\phi^0,  \phi}  F](t,i^0,x)&:= \partial_t F(t, i^0, x) + \sum_{j^0\neq i^0}[F(t, j^0, x) - F(t, i^0, x)]  q^0_{\phi^0}(t,i^0, j^0, x)\\
&+ \sum_{i,j=1}^{M-1} \partial_{x_j} F(t, i^0, x)  x_i   q_{\phi^0,\phi}(t,i,j,i^0, x) +(1- \sum_{k=1}^{M-1} x_k)   \sum_{j=1}^{M-1} \partial_{x_j} F(t, i^0, x) q_{\phi^0,\phi}(t,M, j, i^0, x).
\end{align*}
So far, we have a sequence of time-inhomogeneous Markov processes $(X_t^{0, N}, \mu_t^N)$ characterized by their infinitesimal generators $\mathcal{G}^{0,N}_{\phi^0, \phi}$  which converge to $\mathcal{G}^{0}_{\phi^0, \phi}$. We now aim to show the existence of a limiting Markov process with infinitesimal generator $\mathcal{G}^{0}_{\phi^0, \phi}$. The proof consists of first showing the existence of a Feller semigroup generated by the limiting generator $\mathcal{G}^{0}_{\phi^0, \phi}$, and then applying an argument of convergence of semigroups. 
\begin{remark}
The standard results in the theory of semigroup are tailor-made for time-homogeneous Markov processes. However, they can easily be adapted to the case of time-inhomogeneous Markov process by simply considering the space-time expansion, specifically by augmenting the process $(X_t^{0, N},\mu_t^N)$ into  $(t, X_t^{0, N}, \mu_t^N)$ and considering the uniform convergence on all bounded time intervals.
\end{remark}

\vskip 4pt
Let us introduce some notations which are useful for the functional analysis of the infinitesimal generators and their corresponding semigroups. We set 
$E^N=[0,T]\times \{1,\dots,M^0\}\times \mathcal{P}^N$ and $E^\infty=[0,T]\times \{1,\dots,M^0\}\times \mathcal{P}$  for the state spaces, and we denote by $C(E^\infty)$ the Banach space for  the norm $\|F\|_\infty=\sup_{t,i^0,x} |F(t,i^0,x)|$, of the real valued continuous functions defined on $E^\infty$. We also denote by $C^1(E^\infty)$ the collection of functions in $C(E^\infty)$ that are $C^1$ in $t$ and $x$ for all $i^0 \in \{1,\dots,M^0\}$.

Note that the Markov process $(t, X_t^{0,N},\mu_t^N)$ lives in $E^N$ while the candidate limiting process $(t, X_t^{0,N},\mu_t^N)$ lives in $E^\infty$. The difference is that $\mu_t^N$ only takes values in $\mathcal{P}^N$, which is a finite subset of $\mathcal{P}$. Thus if we want to show the convergence, we need to reset all the processes on the same state space, and our first step should be to extend the definition of $(t,X_t^{0,N},\mu_t^N)$ to a Markov process taking value in $E^\infty$. To do so, we extend the definition of the generator $\cG^{0,N}_{\phi^0,\phi}$ to accommodate functions $F$ defined on the whole $E^\infty$:
\begin{align*}
[\mathcal{G}^{0,N}_{\phi^0,\phi} F] (t, i^0, x)=\;&\partial_t F(t, i^0, x)+ \sum_{j_0 \neq i_0} \bigl(F(t, j^0, x) - F(t, i^0, x)\bigr)  q^0_{\phi^0}(t, i^0, j^0, x)\\
&\; + \sum_{j\neq i}\bigl(F(t, i^0, x + \frac{1}{N}e_{ij} ) -F(t, i^0, x )\bigr) N x_i  \mathbbm{1}_{x_i \ge \frac{1}{N}}  q_{\phi^0,\phi}(t, i, j, i^0, x).
\end{align*}
We claim that for $N$ large enough, $\mathcal{G}^{0,N}_{\phi^0,\phi}$ generates a Markov process with a Feller semigroup taking values in $E^\infty$. Indeed, when the initial distribution is a probability measure on $\{1,\dots, M^0\}\times\mathcal{P}^N$, the process has exactly the same law as $(X_t^{0,N}, \mu_t^N)$. To see why this is true, let us denote for all $x\in\mathcal{P}$ the set $\mathcal{P}_{x}^N:=(x + \frac{1}{N}\mathbb{Z}^{M-1})\cap \mathcal{P}$. Then we can construct a Markov process starting from $(i,x)$ and living in the space of finite states $\{1,\dots,M\}\times \mathcal{P}_{x}^N$, which has the same transition rates as those appearing in the definition of $\mathcal{G}^{0,N}_{\phi^0,\phi}$. In particular, the indicator function $\mathbbm{1}_{x_i \ge \frac{1}{N}}$ forbids the component $x$ to exit the domain $\mathcal{P}$. Hypothesis \ref{boundaryassump} implies that the transition function is continuous on $E$ when $N \ge 1/\epsilon$, where $\epsilon$ is the extinction threshold in the assumption. So this process is a continuous time Markov process with continuous probability kernel in a compact space. By Proposition 4.4 in \cite{swart}, it is a Feller process. In the following, we will still denote this extended version of the process as $(X_t^{0,N}, \mu_t^N)$.

\begin{proposition}\label{existfeller}
There exists a Feller semigroup $\cT=(\cT_t)_{t\ge 0}$ on the space $C(E^\infty)$ such that the closure of $\mathcal{G}^{0}_{\phi^0, \phi}$ is the infinitesimal generator of $\mathcal{T}$.
\end{proposition}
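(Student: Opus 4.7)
The plan is to realize the candidate limit process $(X^0_t, \mu_t)$ explicitly as a piecewise deterministic Markov process (PDMP), and then read off its semigroup and identify the generator. Between jumps of $X^0$, the measure component $\mu$ should follow the deterministic ODE on $\cP$
\begin{equation*}
\dot y_j(s) = \sum_{i=1}^{M-1} y_i(s)\, q_{\phi^0,\phi}(s,i,j,i^0,y(s)) + \bigl(1-\textstyle\sum_{k=1}^{M-1} y_k(s)\bigr)\, q_{\phi^0,\phi}(s,M,j,i^0,y(s)),\quad j=1,\dots,M-1,
\end{equation*}
with $i^0$ held fixed, while $X^0$ jumps at rate $-q^0_{\phi^0}(t,X^0_t,X^0_t,\mu_t)$. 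By Hypotheses \ref{lipassump} and \ref{boundassump} (together with continuity of the feedback functions) the right hand side is Lipschitz and bounded, so the ODE has a unique global solution on $[0,T]$ for every initial datum in $\cP$; denote the flow by $\Phi^{i^0}_{t_0,t}(x)$. The boundary condition in Hypothesis \ref{boundaryassump} is the key ingredient that keeps $\cP$ invariant: when $y_i$ approaches $0$ the outgoing rates from $i$ vanish, so $\dot y_i \ge 0$, and the analogous statement at $\{y_M=0\}$ follows from the second half of Hypothesis \ref{boundaryassump}. Joint continuity of $(t_0,t,x)\mapsto \Phi^{i^0}_{t_0,t}(x)$ follows from Gronwall.

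I would next construct $(X^0_t, \mu_t)$ starting from $(t_0,i^0,x)$ by interlacing $\Phi^{X^0}$ with jumps of $X^0$: draw the first jump time from the appropriate survival function, advance $\mu$ along the flow until then, then select the new state of $X^0$ with probability $q^0_{\phi^0}(\cdot,i^0,j^0,\mu)/(-q^0_{\phi^0}(\cdot,i^0,i^0,\mu))$ and iterate. Boundedness of $q^0$ yields almost surely finitely many jumps on $[0,T]$, so this is a bona fide time-inhomogeneous strong Markov process on $E^\infty$. Define on the space-time state space the semigroup
$$
\cT_h F(t_0, i^0, x) := \EE_{(t_0,i^0,x)}\bigl[F(t_0+h, X^0_h, \mu_h)\bigr],\qquad F\in C(E^\infty),
$$
extending $F$ in the $t$-variable for $t>T$. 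The Feller property $\cT_h F\in C(E^\infty)$ follows from joint continuity of the flow, Lipschitz continuity of the jump rates, and a standard coupling of two PDMPs started from nearby initial conditions against a common dominating Poisson clock. Strong continuity $\|\cT_h F-F\|_\infty\to 0$ as $h\to 0^+$ comes from the $O(h)$ probability of any jump in $[t_0,t_0+h]$ together with uniform continuity of $F$ on the compact set $E^\infty$.

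For $F\in C^1(E^\infty)$, a short computation (decomposing $\EE_{(t_0,i^0,x)}[F(t_0+h,X^0_h,\mu_h)]$ according to whether $X^0$ has jumped before $t_0+h$, Taylor-expanding $F$ along the flow $\Phi^{i^0}_{t_0,\cdot}(x)$, and collecting terms of order $h$) yields
$$
\frac{1}{h}\bigl(\cT_h F - F\bigr)(t_0, i^0, x) \;\longrightarrow\; \cG^{0}_{\phi^0,\phi} F(t_0, i^0, x) \qquad (h\to 0^+)
$$
uniformly on $E^\infty$. Hence $C^1(E^\infty)$ is contained in the domain of the generator $A$ of $\cT$ and $A|_{C^1(E^\infty)} = \cG^{0}_{\phi^0,\phi}$. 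To conclude $A = \overline{\cG^0_{\phi^0,\phi}}$ I would verify that $C^1(E^\infty)$ is a core for $A$; by a standard criterion (e.g.\ Proposition 3.1 of Chapter 1 of Ethier--Kurtz) it is enough to show that the range of $\lambda - \cG^0_{\phi^0,\phi}$ is dense in $C(E^\infty)$ for some $\lambda>0$, which follows by writing the resolvent of $\cT$ as an integral of expectations and approximating a given $G\in C(E^\infty)$ by smooth data.

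The main obstacle is the final core/density step: because the coefficients are only Lipschitz, the flow $\Phi^{i^0}_{t_0,t}$ is not $C^1$ jointly in its arguments, so one cannot argue by invariance of $C^1(E^\infty)$ under $\cT_h$, and the range-density approach requires some care. Should this prove delicate, a fall-back is to use the sequence of bounded operators $\cG^{0,N}_{\phi^0,\phi}$ (which, by the Feller property established just before the statement via Proposition 4.4 of \cite{swart}, generate Feller semigroups on $E^\infty$ once extended as described above) together with pointwise convergence $\cG^{0,N}_{\phi^0,\phi}F\to\cG^0_{\phi^0,\phi}F$ on $C^1(E^\infty)$ and a Trotter--Kato argument, invoking the PDMP just constructed as the candidate limit to close the loop.
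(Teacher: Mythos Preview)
Your PDMP construction is correct and would yield the result, but it takes a different and longer route than the paper. The paper's argument is a clean two-line perturbation: write $\cG^{0}_{\phi^0,\phi} = \cH + \mathcal{K}$ where $\cH F = \partial_t F + \mathbf{v}\cdot\nabla_x F$ is the transport part along your ODE vector field and $\mathcal{K}$ is the finite-difference (jump) part in $i^0$. Since $\mathcal{K}$ is a \emph{bounded} operator on $C(E^\infty)$, the bounded-perturbation theorem (e.g.\ Ethier--Kurtz, Chapter~1, Theorem~7.1) reduces the problem to showing that the closure of $\cH$ generates a Feller semigroup. That semigroup is simply composition with the flow $\Phi^{i^0}$ you already wrote down, and its Feller and strong-continuity properties follow from continuity of the flow in the initial data and uniform continuity of $F$ on the compact $E^\infty$.

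The payoff of the paper's decomposition is precisely that it dissolves the obstacle you flagged. Once $\mathcal{K}$ is bounded, the perturbation theorem tells you the generator of the combined semigroup is $\overline{\cH} + \mathcal{K}$ with domain $\mathcal{D}(\overline{\cH})$, and since $\mathcal{K}$ is bounded, the closure of $\cH+\mathcal{K}$ on $C^1(E^\infty)$ coincides with $\overline{\cH}+\mathcal{K}$; no separate core argument for the full PDMP generator is needed. Your approach instead builds the full process first and then has to recover the core property afterward, which---as you noticed---is awkward when the flow is only Lipschitz. Your Trotter--Kato fallback would also work but is heavier than necessary. In short: your strategy is sound, but splitting off the bounded jump part \emph{before} invoking semigroup theory, rather than after, is the simplification you are looking for.
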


\begin{proof}
We use a simple perturbation argument. Observe that $\mathcal{G}^{0}_{\phi^0, \phi}$ is the sum of two linear operator $\mathcal{H}$ and $\mathcal{K}$ on $C(E^{\infty})$:
\begin{align*}
[\mathcal{H}F](t,i^0,x) :=& \partial_t F(t,i^0,x) + \mathbf{v}(t,i^0,x) \cdot \nabla F(t,i^0,x)\\
[\mathcal{K}F](t,i^0,x) :=& \sum_{j^0\neq i^0}[F(t, j^0, x) - F(t, i^0, x)]  q^0_{\phi^0}(t,i^0, j^0, x)
\end{align*}
where we denote by $\nabla F(t,i^0,x)$ the gradient of $F$ with respect to $x$, and by $\mathbf{v}$ the vector field:
\[
\mathbf{v}_j(t,i^0,x) := \sum_{i=1}^{M-1} x_i q_{\phi^0,\phi}(t,i,j,i^0,x) + \Bigl(1 - \sum_{i=1}^{M-1} x_i \Bigr) \;q_{\phi^0,\phi}(t,M,j,i^0,x).
\]
Being a finite difference operator, $\mathcal{K}$ is a bounded operator on $C(E^{\infty})$ so the proof reduces to showing that $\cH$ generates a Feller semigroup. See for example Theorem 7.1, Chapter 1 in \cite{EthierKurtz}.
To show that the closure of $\mathcal{H}$ generates a strongly continuous semigroup on $C(E^\infty)$ we use the characteristics of the vector field $\mathbf{v}$. For any $(t,i^0,x) \in E^{\infty}$, let $(Y^{t,i^0,x}_u)_{u\ge 0}$ be the solution of the Ordinary Differential Equation (ODE):
\[
dY^{t,i^0,x}_u = \mathbf{v}(t+u,i^0,Y^{t,i^0,x}_u) du,\;\;\;\;Y^{t,i^0,x}_0 = x.
\]
Existence and uniqueness of solutions are guaranteed by the Lipschitz property of the vector field $\mathbf{v}$, which in turn is a consequence of the Lipschitz property of $q_{\phi^0, \phi}$. Notice that by Hypothesis \ref{boundaryassump}, the process $Y^{t,i^0,x}_u$ is confined to $\mathcal{P}$. So we can define the linear operator $\mathcal{T}_s$ on $C(E^{\infty})$:
\[
[\mathcal{T}_s F](t,i^0,x) := F(s+t, i^0, Y^{t,i^0,x}_s)
\]
Uniqueness of solutions implies that $(\mathcal{T}_s)_{s\ge 0}$ is a semigroup. The latter is strongly continuous. Indeed, by the boundedness of $\mathbf{v}$, for a fixed $h>0$, there exists a constant $C_0$ such that $|Y^{t,i^0,x}_s - x| \le C_0 s$ for all $s\le h$ and $(t,i^0,x) \in E^{\infty}$. Combining this estimation with the fact that $F$ is uniformly continuous in $(t,x)$ for all $F\in C(E^{\infty})$, we obtain that $\|\mathcal{T}_s F - F\| \rightarrow 0, s\rightarrow 0$. Finally the semigroup $\mathcal{T}$ is Feller, since the solution of ODE $Y^{t,i^0,x}_s$ depends continuously on the initial condition as a consequence of the Lipschitz property of the vector field $\mathbf{v}$.

It is plain to check that $\mathcal{H}$ is the infinitesimal generator of the semigroup $\mathcal{T}$, and the domain of $\mathcal{H}$ is $C^1(E^{\infty})$. \end{proof}

The following lemma is a simple adaptation of Theorem 6.1, Chapter 1 in \cite{EthierKurtz} and is an important ingredient in the proof of the convergence. It says that the convergence of the infinitesimal generators implies the convergence of the corresponding semigroups.
\begin{lemma}\label{convgeneratorsemigroup}
For $N=1,2,\dots $, let $\{T_N(t)\}$ and $\{T(t)\}$ be strongly continuous contraction semigroups on $L$ with generator $\mathcal{G}_N$ and $\mathcal{G}$ respectively. Let $D$ be a core for $\mathcal{G}$ and assume that $D \subset \mathcal{D}(\mathcal{G}_N)$ for all $N\ge 1$. If $\lim_{N\to\infty} \mathcal{G}_N F = \mathcal{G}  F$ for all $F \in D$, then for each $F\in L$, $\lim_{N\to\infty} T_N(t) F = T(t) F$ for all $t\ge 0$.
\end{lemma}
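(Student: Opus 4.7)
The plan is to deduce this Trotter--Kato type statement from the standard correspondence between convergence of infinitesimal generators, convergence of resolvents, and convergence of semigroups. I would carry out three clean steps.

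First, I would translate the generator convergence on $D$ into resolvent convergence on a dense subset of $L$. Fix any $\lambda > 0$, and denote $R^N_\lambda = (\lambda I - \mathcal{G}_N)^{-1}$ and $R_\lambda = (\lambda I - \mathcal{G})^{-1}$. Since $T_N$ and $T$ are strongly continuous contraction semigroups, Hille--Yosida guarantees that these resolvents exist, map $L$ bijectively onto $\mathcal{D}(\mathcal{G}_N)$ and $\mathcal{D}(\mathcal{G})$ respectively, and satisfy $\|R^N_\lambda\|, \|R_\lambda\| \le 1/\lambda$. For $F \in D \subset \mathcal{D}(\mathcal{G}_N)$, the assumption $\mathcal{G}_N F \to \mathcal{G} F$ yields $(\lambda I - \mathcal{G}_N) F \to (\lambda I - \mathcal{G}) F$. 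Setting $g := (\lambda I - \mathcal{G}) F$ and applying $R^N_\lambda$ on both sides gives $R^N_\lambda g \to F = R_\lambda g$.

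Second, I would extend the resolvent convergence to all $g \in L$. Since $D$ is a core for $\mathcal{G}$, it is dense in $\mathcal{D}(\mathcal{G})$ in the graph norm; hence its image $(\lambda I - \mathcal{G}) D$ is dense in the range of $\lambda I - \mathcal{G}$, which by Hille--Yosida equals all of $L$. Combined with the uniform contraction bound $\|R^N_\lambda\| \le 1/\lambda$, a routine three-epsilon argument then promotes pointwise convergence on the dense set $(\lambda I - \mathcal{G}) D$ to pointwise convergence on $L$, so that $R^N_\lambda g \to R_\lambda g$ for every $g \in L$.

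Third, I would invoke the classical Trotter--Kato theorem, which asserts that for strongly continuous contraction semigroups on a Banach space, strong convergence of the resolvents at a single value of $\lambda > 0$ is equivalent to strong convergence of the semigroups, uniformly in $t$ on compact subsets of $[0, \infty)$. Applied with the resolvent convergence from the second step, this yields $T_N(t) F \to T(t) F$ for all $F \in L$ and all $t \ge 0$.

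The main (mild) technical point is the second step: extracting the density of $(\lambda I - \mathcal{G}) D$ in $L$ from the abstract definition of a core. Once this is in hand, the first and third steps are direct applications of classical semigroup results. The contractivity hypothesis plays a double role, providing both the bijectivity of $\lambda I - \mathcal{G}_N$ through Hille--Yosida and the uniform resolvent bound needed to promote pointwise convergence from a dense subset to the whole space.
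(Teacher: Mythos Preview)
Your proof is correct. Note, however, that the paper does not actually prove this lemma: it is stated without proof and attributed as ``a simple adaptation of Theorem 6.1, Chapter 1 in \cite{EthierKurtz}.'' What you have written is precisely the standard Trotter--Kato argument underlying that theorem, so your proposal supplies the details the paper chose to omit by citation.
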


We are now ready to state and prove the main result of this section: the Markov process $(X_t^{0,N}, \mu_t^N)$ describing the dynamics of the state of the major player and the empirical distribution of the states of the $N$ minor players converges  weakly to a Markov process with infinitesimal  generator $\mathcal{G}^{0}_{\phi^0, \phi}$, when the players choose Lipschitz strategies. 

\begin{theorem}\label{convtheomajor}
Assume that the major player chooses a control strategy $\balpha^0$ given by a Lipschitz feedback function $\phi^0$ and that all the minor players choose control strategies given by the same Lipschitz feedback function $\phi$. Let $i^0\in \{1,\dots, M^0\}$ and for each integer $N\ge 1$, $x^N \in \mathcal{P}^N$ 
with limit $x \in \mathcal{P}$. Then the sequence of processes $(X_t^{0,N}, \mu_t^N)$ with initial conditions $X_0^{0,N} = i^0$, $\mu_t^N = x^N$ converges weakly to a Markov process $(X_t^0, \mu_t)$ with initial condition $X_0^0 = i^0$, $\mu_0 = x$. The infinitesimal generator for $(X_t^0, \mu_t)$ is given by:
\begin{align*}
[\mathcal{G}^0_{\phi^0, \phi} F] (t, i^0, x):=&\;\partial_t F(t, i^0, x) + \sum_{j^0\neq i^0}[F(t, j^0, x) - F(t, i^0, x)] q^0_{\phi^0}(t,i^0, j^0, x)\\
& + \sum_{i,j=1}^{M-1} \partial_{x_j} F(t, i^0, x)  x_i   q_{\phi^0,\phi}(t,i,j, i^0, x)+(1- \sum_{k=1}^{M-1} x_k)   \sum_{k=1}^{M-1} \partial_{x_j} F(t, i^0, x) q_{\phi^0,\phi}(t,M, j, i^0, x).
\end{align*}
\end{theorem}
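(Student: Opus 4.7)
The plan is to combine the two ingredients already assembled in the paper, Proposition~\ref{existfeller} and Lemma~\ref{convgeneratorsemigroup}, with a standard criterion for weak convergence of Markov processes on Skorokhod path space. From the discussion preceding Proposition~\ref{existfeller}, for $N \ge 1/\epsilon$ each extended operator $\cG^{0,N}_{\phi^0,\phi}$ generates a Feller semigroup $\cT^N$ on $C(E^\infty)$, and Proposition~\ref{existfeller} provides the limiting Feller semigroup $\cT$ whose generator is the closure of $\cG^0_{\phi^0,\phi}$. So the remaining work splits into (i) verifying the convergence of generators on a suitable core, and (ii) upgrading the resulting semigroup convergence to weak convergence of processes.

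For step (i), I would take $D = C^1(E^\infty)$ as a core for the closure of $\cG^0_{\phi^0,\phi}$; this is natural since the proof of Proposition~\ref{existfeller} already identifies $C^1(E^\infty)$ as the domain of the transport generator $\cH$, and the perturbation $\cK$ is bounded. The convergence $\cG^{0,N}_{\phi^0,\phi} F \to \cG^0_{\phi^0,\phi} F$ for $F \in C^1(E^\infty)$ follows from the integral Taylor identity
\[
N\bigl[F(t,i^0,x+\tfrac{1}{N}e_{ij}) - F(t,i^0,x)\bigr] \;=\; \int_0^1 \nabla_x F(t,i^0,x+\tfrac{s}{N}e_{ij}) \cdot e_{ij}\, ds,
\]
whose right-hand side converges uniformly in $(t,i^0,x)$ on the compact set $E^\infty$ to $\nabla_x F(t,i^0,x)\cdot e_{ij}$ by uniform continuity of $\nabla_x F$. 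Multiplying by $x_i\, q_{\phi^0,\phi}(t,i,j,i^0,x)$ (bounded by Hypothesis~\ref{boundassump}) and summing yields exactly the drift component of $\cG^0_{\phi^0,\phi}$, after noticing that the term $1-\sum_{k<M} x_k$ appears from the $i=M$ contributions. The indicator $\mathbbm{1}_{x_i \ge 1/N}$ in the extended operator is innocuous: by Hypothesis~\ref{boundaryassump}, whenever $x_i < \epsilon$ the rate $q$ vanishes, so for $N \ge 1/\epsilon$ the indicator is identically $1$ on the support of $q$. The jump part in the major player's state matches term-for-term between $\cG^{0,N}_{\phi^0,\phi}$ and $\cG^0_{\phi^0,\phi}$. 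Applying Lemma~\ref{convgeneratorsemigroup} then gives $\cT^N_t F \to \cT_t F$ in $C(E^\infty)$ for every $F \in C(E^\infty)$ and every $t \ge 0$.

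For step (ii), the hypothesis $(i^0,x^N)\to (i^0,x)$ together with the uniform semigroup convergence implies that for every $F \in C(E^\infty)$ and every $t$, $\EE[F(t,X_t^{0,N},\mu_t^N)] = [\cT^N_t F](0,i^0,x^N) \to [\cT_t F](0,i^0,x)$, yielding convergence of one-dimensional and (by the Markov property) finite-dimensional distributions. To obtain weak convergence on the Skorokhod space $D([0,T],\{1,\dots,M^0\}\times\cP)$, I would invoke Theorem 2.5 of Chapter 4 in \cite{EthierKurtz}: since the state space is compact, the limit is Feller, and the core condition holds, the sequence $(X_t^{0,N},\mu_t^N)$ is automatically tight and converges in distribution to the Feller process generated by $\cT$.

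The main obstacle is the verification that $C^1(E^\infty)$ is actually a \emph{core} for the closure of $\cG^0_{\phi^0,\phi}$, a prerequisite for Lemma~\ref{convgeneratorsemigroup}; density in the domain is not sufficient, one needs density in the graph norm. I would establish this by showing that the semigroup $\cT$ leaves $C^1(E^\infty)$ invariant: for the transport part $\cH$ this follows from the Lipschitz (in fact $C^1$) dependence of the characteristic flow $Y^{t,i^0,x}_u$ on the initial datum $x$, which is a consequence of the Lipschitz regularity of the vector field $\mathbf{v}$ coming from Hypotheses~\ref{lipassump} and~\ref{boundassump}; invariance is preserved under bounded perturbation by $\cK$. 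Once invariance under $\cT$ is known, a standard result (e.g., Proposition 3.3, Chapter 1 of \cite{EthierKurtz}) identifies $C^1(E^\infty)$ as a core.
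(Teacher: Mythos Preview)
Your proposal is correct and follows essentially the same route as the paper: invoke Proposition~\ref{existfeller} for the limiting Feller semigroup, apply Lemma~\ref{convgeneratorsemigroup} with the core $C^1(E^\infty)$ to get semigroup convergence, and conclude weak convergence via Theorem~2.5, Chapter~4 of \cite{EthierKurtz}. The paper uses the mean value theorem where you use the integral Taylor identity, but these are equivalent; your treatment of the core property (via semigroup invariance of $C^1(E^\infty)$) is in fact more careful than the paper's, which simply asserts ``it is easy to see that $C^1(E^\infty)$ is a core'' without further justification.
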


\begin{proof}
Let us denote $\mathcal{T}^N$ the semigroup associated with the time inhomogeneous Markov process $(t, X_t^{0,N},\mu_t^N)$ and the infinitesimal generator $\mathcal{G}^{0,N}_{\phi^0,\phi}$. Recall that by the procedure of extension we described above, the process $(t, X_t^{0,N},\mu_t^N)$ now lives in $E^{\infty}$ and the domain for $\mathcal{G}^{0,N}_{\phi^0,\phi}$ is $C(E^{\infty})$. In light of Theorem 2.5, Chapter 4 in \cite{EthierKurtz} and Proposition \ref{existfeller} we just proved, it boils down to proving that for any $F \in E^{\infty}$ and $t\ge 0$, $\mathcal{T}_t^N F$ converges to $\mathcal{T}_t F$, where $(\mathcal{T}_t)_{t\ge 0}$ is the strongly continuous semigroup generated by the closure of $\mathcal{G}_{\phi^0,\phi}^0$.

To show the convergence, we apply Lemma \ref{convgeneratorsemigroup}. It is easy to see that $C^1(E^\infty)$ is a core for $\bar{\mathcal{G}}_{\phi^0,\phi}^0$ and $C^1(E^\infty)$ is included in the domain of $\mathcal{G}^{0,N}_{\phi^0,\phi}$. Therefore it only remains to show that for all $F\in C^1(E^\infty)$, $\mathcal{G}^{0,N}_{\phi^0,\phi} F$ converges to $\mathcal{G}^{0}_{\phi^0,\phi} F$ in the space $(C(E^\infty), \|\cdot\|)$. Using the notation $x_M := 1 - \sum_{i=1}^{M-1} x_i$, we have:
\begin{equation}\label{generatorapproxest}
\begin{aligned}
&|[\mathcal{G}_{\phi^0,\phi}^{0,N} F](t,i^0,x) - [\mathcal{G}^{0}_{\phi^0,\phi} F](t,i^0,x)|\\
= & \sum_{j\neq i}|N(F(t, i^0, x + \frac{1}{N}e_{ij} ) -F(t, i^0, x )) - (\mathbbm{1}_{j\neq M} \partial_{x_j} F(t, i^0, x ) - \mathbbm{1}_{i\neq M} \partial_{x_i}F(t, i^0, x ))| x_i  q^N(t, i, j, i^0, x)\\
\le & \sum_{j\neq i}( |\partial_{x_j} F(t, i^0, x + \frac{\lambda_{i,j}}{N}e_{i,j} ) -\partial_{x_j} F(t, i^0, x )|+ |\partial_{x_i}F(t, i^0, x + \frac{\lambda_{i,j}}{N}e_{i,j}  ) - \partial_{x_i}F(t, i^0, x )|) x_i  q^N(t, i, j, i^0, x)
\end{aligned}
\end{equation}
where we applied intermediate value theorem at the last inequality and $\lambda_{i,j}\in[0,1]$. Note that $\lambda_{i,j}$ also depends on $t,x,i^0$ but we omit them for sake of the simplicity. Remark that $F\in C^1(E^\infty)$ and $E^\infty$ is compact, therefore $\partial_{x_i}F$ is uniformly continuous on $E^\infty$ for all $i$, which immediately implies that $\|\mathcal{G}_{\phi^0,\phi}^{0,N} F - \mathcal{G}_{\phi^0,\phi}^{0} F\| \rightarrow 0, N\rightarrow +\infty$. This completes the proof.
\end{proof}

\noindent
\textbf{Mean Field Major Player's Optimization Problem}

\noindent Given that all the minor players are assumed to use a control strategy based on the same feedback function $\phi$, the best response of the major player is to use the strategy $\hat\balpha^0$ given by the feedback function $\hat\phi^0$ solving the optimal control problem:
\[
\inf_{\balpha^0\leftrightarrow\phi^0 \in\mathbb{A}^0}\mathbb{E}\left[\int_{0}^{T}f^0(t, X_t^0, \phi^0(t, X_t^0, \mu_t), \mu_t) dt + g^0(X_T^0, \mu_T)\right]
\]
where $(X_t^0, \mu_t)_{0\le t\le T}$ is the continuous time Markov process with infinitesimal generator $\mathcal{G}^0_{\phi^0,\phi}$. 

\subsection{Representative Minor Player's Problem}
We turn to the computation of the best response of a generic minor player. We assume that the major player chooses a strategy $\balpha^0\in \mathbb{A}^0$ of the form $\alpha^0_t= \phi^0(t, X_t^{0,N}, \mu^N_t)$ and that the minor players in $\{2,\cdots,N\}$ all use strategy  $\balpha^i\in \mathbb{A}$ of the form $\alpha^i_t= \phi(t, X^{i,N}_t,X_t^{0,N}, \mu^N_t)$ for $i=2,\cdots,N$, and that the first minor player uses strategy  $\o\balpha\in \mathbb{A}$ of the form $\o\alpha_t= \o\phi(t, X^{1,N}_t,X_t^{0,N}, \mu^N_t)$. Clearly, by symmetry, whatever we are about to say after we singled the first minor player out, can be done if we single out any other minor player. As before, for each fixed $N$, the process $(X_t^{0,N}, X_t^{1,N}, \mu_t^N)$ is a finite-state continuous time Markov process with state space $\{1,\dots, M^0\}\times \{1,\dots, M\}\times\mathcal{P}^N$ whose infinitesimal generator $\mathcal{G}^{N}_{\phi^0, \phi, \o\phi}$ is given, up to the time derivative, by the corresponding Q-matrix of infinitesimal jump rates. In the present situation, its value on any real valued function $F$ defined on $[0,T]\times \{1,\dots, M^0\}\times \{1,\dots, M\}\times\mathcal{P}^N$ such that $t\rightarrow F(t,i^0,i,x)$ is $\mathcal{C}^1$ for any $i^0,i$ and $x$ is given by the formula:
\begin{equation}\label{generatorNminor}
\begin{aligned}
[\mathcal{G}^{N}_{\phi^0, \phi,\o\phi}  F] (t, i^0, i, x) =& \partial_t F(t, i^0, i, x)+ \sum_{j^0, j_0 \neq i_0} [F(t, j^0,i, x) - F(t, i^0,i, x)]  q^0_{\phi^0}(t, i^0, j^0, x)\\
&+ \sum_{j, j\neq i}[F(t, i^0, j, x + \frac{1}{N}e_{ij} ) -F(t, i^0,i, x )] q_{\phi^0, \o\phi}(t, i, j, i^0, x)\\
&+ \sum_{(j,k), j\neq k}[F(t, i^0, i, x +\frac{1}{N} e_{kj} ) - F(t, i^0, i, x)] (N x_k -  \mathbbm{1}_{k=i}) q_{\phi^0,\phi}(t, k, j, i^0, x).
\end{aligned}
\end{equation}
As before the summations appearing above correspond to single jumps when 1) only the state of the major player changes from state $i^0$ to $j^0$, 2) only the state of the singled out first minor player changes from state $i$ to $j$, and finally 3) the state of one of the last $N-1$ minor players jumps from state $k$ to $j$. 

\vskip 4pt
Following the same treatment as in major player's problem, we have the convergence result for the process $(X_t^{N}, X_t^{0,N},\mu_t^N)$:

\begin{theorem}\label{convtheominor}
Assume that for each integer $N$, the major player chooses a control $\balpha^0$ given by a Lipschitz feedback function $\phi^0$, the first minor player chooses a control $\o\balpha$ given by a Lipschitz feedback function $\o\phi$, all the other minor players choose strategies given by the same Lipschitz feedback function $\phi$, and that these three feedback functions do not depend upon $N$. Let $i^0\in \{1,\dots, M^0\}$ and for each integer $N\ge 2$, 
$x^N \in \mathcal{P}^N$ with limit $x \in \mathcal{P}$. Then the sequence of processes $(X_t^{N}, X_t^{0,N}, \mu_t^N)_{0\le t\le T}$ with initial conditions $X_0^N = i$, $X_0^{0,N}= i^0$ and $\mu_0^N = x^N$ converges weakly to a Markov process $(X_t, X_t^0, \mu_t)$ with initial condition $X_0 = i$, $X_0^0 = i^0$ and $\mu_0 = x$. Its infinitesimal generator is given by:
\begin{align*}
[\mathcal{G}_{\phi^0, \phi, \o\phi}  F] (t, i, i^0, x) :=& \;\partial_t F(t,i, i^0, x) + \sum_{j^0, j^0\neq i^0}[F(t,i, j^0, x) - F(t,i, i^0, x)] q^0_{\phi^0}(t, i^0, j^0, x) \\
&\hskip -35pt
 + \sum_{j, j\neq i}[F(t,j, i^0, x) - F(t,i, i^0, x)] q_{\phi^0,\o\phi}(t,i, j, i^0, x)+ \sum_{i,j=1}^{M-1} \partial_{x_j} F(t, i, i^0, x)  x_i   q_{\phi^0,\phi}(t,i,j,i^0,x)\\
&\;+(1- \sum_{k=1}^{M-1} x_k)   \sum_{j=1}^{M-1} \partial_{x_k} F(t,i, i^0, x) q_{\phi^0,\phi}(t,M, j, i^0, x).
\end{align*}
\end{theorem}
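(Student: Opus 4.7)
The plan is to follow the same blueprint as Theorem \ref{convtheomajor}, now on the enlarged state space $\bar E^\infty := [0,T]\times\{1,\dots,M^0\}\times\{1,\dots,M\}\times\cP$ that carries the additional coordinate $i$ tracking the state of the singled-out minor player. First I would extend the generator $\mathcal{G}^N_{\phi^0,\phi,\o\phi}$ from functions on the grid $[0,T]\times\{1,\dots,M^0\}\times\{1,\dots,M\}\times\cP^N$ to functions on the whole $\bar E^\infty$, inserting an indicator factor such as $\mathbbm{1}_{Nx_k - \mathbbm{1}_{k=i}\ge 1}$ in the third summation of \eqref{generatorNminor} so that the $x$-component cannot leave $\cP$. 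Hypothesis \ref{boundaryassump}, together with $N\ge 1/\epsilon$, ensures that the resulting jump rates are continuous on the compact set $\bar E^\infty$, so Proposition 4.4 in \cite{swart} provides a Feller process on $\bar E^\infty$ whose law agrees with that of $(X_t^{N},X_t^{0,N},\mu_t^N)$ whenever the initial value of $\mu^N$ lies on the grid $\cP^N$.

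Second, I would reproduce Proposition \ref{existfeller} for the limiting operator by writing $\mathcal{G}_{\phi^0,\phi,\o\phi} = \mathcal{H} + \mathcal{K}_0 + \mathcal{K}_1$, where $\mathcal{H}$ is the transport-plus-time-derivative operator $\partial_t + \mathbf{v}(t,i^0,x)\cdot\nabla_x$ with the same vector field $\mathbf{v}$ as before (now carrying the additional coordinate $i$ as an inert parameter), $\mathcal{K}_0$ is the finite-difference operator carrying the $q^0_{\phi^0}$ jumps of $i^0$, and $\mathcal{K}_1$ is the finite-difference operator carrying the $q_{\phi^0,\o\phi}$ jumps of $i$. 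Each of $\mathcal{K}_0$ and $\mathcal{K}_1$ is bounded on $C(\bar E^\infty)$ by Hypothesis \ref{boundassump}, while $\mathcal{H}$ generates a Feller semigroup through the flow of $\mathbf{v}$ exactly as in the earlier proof, since $\mathbf{v}$ is Lipschitz in $x$ uniformly in $(t,i^0,i)$ and the characteristics stay in $\cP$ by Hypothesis \ref{boundaryassump}. Adding the two bounded perturbations via Theorem 7.1, Chapter 1 in \cite{EthierKurtz} then yields the desired Feller semigroup $\mathcal{T}$.

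Third, I would verify that $\mathcal{G}^N_{\phi^0,\phi,\o\phi} F \to \mathcal{G}_{\phi^0,\phi,\o\phi} F$ uniformly on $\bar E^\infty$ for every $F\in C^1(\bar E^\infty)$, which provides a core for the closure of the limiting generator. The $i^0$ and $i$ jump contributions in \eqref{generatorNminor} are already finite differences of order one and agree with their limits, so only the last summation, which discretizes the transport in $x$, needs an estimate; it is handled by the same Taylor-expansion / intermediate-value argument as in \eqref{generatorapproxest}. Lemma \ref{convgeneratorsemigroup} then upgrades this convergence of generators to convergence of the corresponding semigroups, and Theorem 2.5, Chapter 4 in \cite{EthierKurtz} converts it into the weak convergence of processes under the given initial data.

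The main obstacle is the $-\mathbbm{1}_{k=i}$ correction appearing in the coefficient $Nx_k - \mathbbm{1}_{k=i}$ of the third summation of \eqref{generatorNminor}. It encodes the fact that the singled-out minor player is excluded from the empirical measure driving the transitions of the remaining $N-1$ minor players. Although this is an $O(1)$ modification of $Nx_k$, its contribution after multiplication by the finite-difference increment $F(t, i^0, i, x + e_{kj}/N) - F(t, i^0, i, x)$ is only $O(1/N)$, uniformly in $(t,i^0,i,x)$ by the uniform continuity of $F$ on the compact set $\bar E^\infty$; hence it vanishes in the limit and the estimate collapses to the one in \eqref{generatorapproxest}. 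Once this bookkeeping is in place, the proof proceeds exactly as in Theorem \ref{convtheomajor}.
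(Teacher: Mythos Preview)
Your proposal is correct and follows exactly the approach the paper takes: the paper does not give a separate proof but simply states that one follows the same treatment as in Theorem \ref{convtheomajor}, which is precisely what you have outlined, including the correct identification and handling of the $-\mathbbm{1}_{k=i}$ correction term. One small point you glossed over is that the second summation in \eqref{generatorNminor} (the jumps of the singled-out minor player) also carries a $\tfrac{1}{N}e_{ij}$ shift in the $x$-argument, so it does not literally coincide with its limit; the discrepancy $F(t,i^0,j,x+\tfrac{1}{N}e_{ij})-F(t,i^0,j,x)$ is $O(1/N)$ by uniform continuity of $F$ on the compact $\bar E^\infty$ and is absorbed in the same way.
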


\noindent \textbf{Representative Minor Player's Optimization Problem}

\noindent 
Accordingly, in the mean field game limit, we define the search for the best response of the representative minor player (i.e. the minor player we singled out) to the strategies adopted by the major player and the field of minor players as the following optimal control problem. Assuming that the major player uses a feedback function $\phi^0$ and all the other minor players the feedback function $\phi$, the best response of the representative minor player is given by the solution of:
\[
\inf_{\o\balpha\leftrightarrow \o\phi\in\mathbb{A}}\mathbb{E}\left[\int_{0}^{T} f(t,X_t, \bar\phi(t, X_t, X_t^0, \mu_t), X_t^0,\phi^0(t, X_t^0, \mu_t),  \mu_t) dt + g(X_T, X_T^0, \mu_T)\right]
\]
where $(X_t, X_t^0, \mu_t)_{0\le t\le T}$ is a  Markov process with infinitesimal generator $\mathcal{G}_{\phi^0, \phi, \o\phi}$. We shall denote by 
$\o\phi=\bphi(\phi^0,\phi)$ the optimal feedback function providing the solution of this optimal control problem.

\section{Optimization Problem for Individual Players}
\label{se:optimizations}
In this section, we use the dynamic programming principle to characterize the value functions of the major and minor players' optimization problems 
as viscosity solutions of the corresponding Hamilton-Jacobi-Bellman (HJB for short) equations.
We follow the detailed arguments given in Chapter II of \cite{FlemingSoner}. For both the major and representative minor player, we show that the value function solves a weakly coupled system of Partial Differential Equations (PDEs for short) in viscosity sense. We also prove an important uniqueness result 
for these solutions. This uniqueness result is important indeed because as the reader noticed, in defining the best response map, we implicitly assumed that these optimization problems could be solved and that their solutions were unique.

\vskip 4pt
We first consider the value function of the major player's optimization problem assuming that the minor players use the feedback function $\phi$:
\[
V^0_{\phi}(t,i^0, x) := \inf_{\balpha^0\leftrightarrow\phi^0}\mathbb{E}\left[\int_{t}^{T}f^0_{\phi^0}(s, X_s^0, \mu_s) ds + g^0(X_T^0, \mu_T) | X_t^0 = i^0, \mu_t = x\right]
\]

\begin{theorem}\label{hjbtheomajor}
Assume that for all $i^0 \in \{1,\dots, M^0\}$, the mapping $(t,x) \rightarrow V^{0}_{\phi}(t,i^0, x)$ is continuous on $[0,T]\times \mathcal{P}$. Then $V^0_{\phi}$ is a viscosity solution to the system of $M^0$ PDEs on $[0,T]\times \cP$:
\begin{equation}\label{hjbmajor}
\begin{aligned}
&0=\partial_t v^0(t, i^0, x) + \inf_{\alpha^0\in A^0}\bigg\{f^0(t, i^0, \alpha^0, x) + \sum_{j^0\neq i^0} [v^0(t,j^0, x) - v^0(t,i^0, x)] q^0(t,i^0, j^0, \alpha^0, x)\\
&\hskip 25pt
+(1-\sum_{k=1}^{M-1} x_k)  \sum_{k=1}^{M-1} \partial_{x_k} v^0(t,i^0,x)  q(t,M, k, \phi(t,M, i^0, x), i^0, \alpha^0, x)\\
&\hskip 25pt
+\sum_{i,j=1}^{M-1} \partial_{x_j} v^0(t, i^0, x)  x_i  q(t,i,j,\phi(t,i,i^0,x), i^0,\alpha^0, x)\bigg\},\qquad (i_0,t,x)\in \{1,\dots, M^0\} \times [0,T[ \times \mathcal{P},\\
&v^0(T,i^0, x) = g^0(i^0, x),\qquad (i^0, x)\in \{1,\dots, M^0\}\times\mathcal{P}.
\end{aligned}
\end{equation}
\end{theorem}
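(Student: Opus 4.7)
The plan is to follow the classical viscosity-solution approach for HJB equations driven by a controlled Markov process, adapted to our mixed finite/continuous state setting $(i^0,x) \in \{1,\dots,M^0\}\times\mathcal{P}$. The generator $\mathcal{G}^0_{\phi^0,\phi}$ identified in Theorem \ref{convtheomajor} plays the role of the infinitesimal operator in the Dynkin-type expansion used in the viscosity argument. The terminal condition $v^0(T,i^0,x) = g^0(i^0,x)$ follows immediately from the definition of $V^0_\phi$, so the real content is in showing the PDE holds in the viscosity sense on $[0,T[\times\{1,\dots,M^0\}\times\mathcal{P}$.

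The first step is to establish the dynamic programming principle: for every $(t,i^0,x)$ and every stopping time $\tau$ valued in $[t,T]$,
\[
V^0_\phi(t,i^0,x) = \inf_{\balpha^0\leftrightarrow\phi^0}\mathbb{E}\left[\int_t^\tau f^0_{\phi^0}(s,X^0_s,\mu_s)\,ds + V^0_\phi(\tau,X^0_\tau,\mu_\tau)\,\Big|\,X^0_t=i^0,\mu_t=x\right].
\]
The proof is standard once one notes that, for fixed $\phi$, the controlled dynamics of $(X^0_t,\mu_t)$ are Markov with generator $\mathcal{G}^0_{\phi^0,\phi}$ (Theorem \ref{convtheomajor}), and that Markov feedback strategies can be concatenated at a stopping time, using measurable selection to pass from the pointwise inequality to the essential inequality on trajectories. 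The Lipschitz assumptions (Hypothesis \ref{lipassump}) and boundedness (Hypothesis \ref{boundassump}), together with the confinement of $\mu_t$ to $\mathcal{P}$ ensured by Hypothesis \ref{boundaryassump}, guarantee that all quantities involved are bounded and the running cost is uniformly integrable.

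The second step is the subsolution inequality. Let $\varphi \in C^1([0,T]\times\mathcal{P})$ be a test function and assume $(t_0,x_0)$ is a local maximum of $V^0_\phi(\cdot,i^0,\cdot)-\varphi$ with $V^0_\phi(t_0,i^0,x_0)=\varphi(t_0,x_0)$. Fix any constant $\alpha^0 \in A_0$ and apply the DPP with the constant feedback $\phi^0\equiv \alpha^0$ and $\tau = t_0+h$, so that
\[
\varphi(t_0,x_0) \le \mathbb{E}\left[\int_{t_0}^{t_0+h}f^0(s,X^0_s,\alpha^0,\mu_s)\,ds + \varphi(t_0+h,\mu_{t_0+h})\mathbbm{1}_{X^0_{t_0+h}=i^0}\right] + (\text{terms with }X^0\ne i^0).
\]
Expanding $\varphi(t_0+h,\mu_{t_0+h})$ and treating the change in $X^0$ via the jump rates $q^0_{\alpha^0}(t_0,i^0,j^0,x_0)$, dividing by $h$ and letting $h\downarrow 0$ using Dynkin's formula associated with $\mathcal{G}^0_{\phi^0,\phi}$, yields the inequality $\text{PDE}(\alpha^0)\ge 0$. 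Taking infimum over $\alpha^0\in A_0$ and using continuity gives the subsolution property. The supersolution property is analogous: given a local minimum point of $V^0_\phi-\varphi$, use an $\varepsilon$-optimal Lipschitz feedback $\phi^0_\varepsilon$, invoke the DPP to get the reverse inequality, divide by $h$ and pass to the limit, then take $\varepsilon \downarrow 0$.

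The main obstacles are two. First, proving the DPP rigorously requires measurable selection so that one can concatenate an approximately optimal strategy with an arbitrary initial one, and one must check that the class of admissible feedback controls $\mathbb{A}^0$ is stable under such concatenation; this is technical but standard. Second, handling the $\mu_t$-component carefully: since $\mu_t$ evolves continuously between jumps of the population and only jumps of size $O(1/N)$ arise in the prelimit, the limiting dynamics is purely drift in $x$ (as reflected in the PDE's gradient terms), so the Dynkin expansion for $\mathcal{G}^0_{\phi^0,\phi}$ only produces first-order $x$-derivatives. We must ensure the test functions only need to be $C^1$, not $C^2$, matching the form of the equation. Continuity of $V^0_\phi$ in $(t,x)$ is assumed in the statement, which bypasses the subtlety of working with the semicontinuous envelopes.
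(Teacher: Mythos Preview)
Your overall strategy---dynamic programming principle followed by a short-time expansion---is the same as the paper's. The difference lies in how you handle the coupling between the $M^0$ components of the system, and this is precisely where the technical content of the proof sits.

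In the viscosity inequality (Definition \ref{viscositydefi}), the smooth test function $\theta$ replaces $V^0_\phi(\cdot,i^0,\cdot)$ only at the distinguished index $i^0$; the values $V^0_\phi(t,j^0,x)$ for $j^0\neq i^0$ appear \emph{as is} in the jump term $\sum_{j^0\neq i^0}[V^0_\phi(t,j^0,x)-V^0_\phi(t,i^0,x)]q^0(\cdots)$. Your displayed inequality correctly isolates these as ``terms with $X^0\neq i^0$'', but then you invoke ``Dynkin's formula associated with $\mathcal{G}^0_{\phi^0,\phi}$'' to pass to the limit. Dynkin's formula requires the function to be in the domain of the generator, i.e.\ $C^1$ in $(t,x)$, and $V^0_\phi(\cdot,j^0,\cdot)$ is only assumed continuous. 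So the limit
\[
\frac{1}{h}\,\mathbb{E}\bigl[V^0_\phi(t_0+h,j^0,\mu_{t_0+h})\mathbbm{1}_{X^0_{t_0+h}=j^0}\bigr]\ \longrightarrow\ V^0_\phi(t_0,j^0,x_0)\,q^0(t_0,i^0,j^0,\alpha^0,x_0)
\]
needs a separate argument (continuity of $V^0_\phi$, of $\mu_\cdot$, and the $O(h)$ jump probability), not Dynkin.

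The paper sidesteps this by building a hybrid function $\Phi$ with $\Phi(\cdot,i^0,\cdot)=\theta$ and $\Phi(\cdot,j^0,\cdot)=V^0_\phi(\cdot,j^0,\cdot)$ for $j^0\neq i^0$, so that $\Phi\ge V^0_\phi$ everywhere; monotonicity of the dynamic programming operator $\mathcal{T}_{t,t+h}$ immediately gives
\[
\frac{1}{h}\bigl[(\mathcal{T}_{t,t+h}\Phi(t+h,\cdot,\cdot))(i^0,x)-\Phi(t,i^0,x)\bigr]\ \ge\ 0,
\]
and a dedicated lemma (Lemma \ref{dynamicprogoperator}) computes this limit assuming only that $\Phi(\cdot,i^0,\cdot)$ is $C^1$ while $\Phi(\cdot,j^0,\cdot)$ is merely continuous for $j^0\neq i^0$. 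The proof of that lemma uses Weierstrass approximation to upgrade the continuous components to $C^1$ ones. This packaging also handles the supersolution side uniformly: the $h^2$-optimal control (your ``$\varepsilon$-optimal feedback'') is chosen inside the lemma once and for all, rather than interleaved with the viscosity argument.

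Your route can be made to work, but as written it does not identify the step that carries the weight.
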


The notion of viscosity solution in the above result is specified by the following definition:

\begin{definition}\label{viscositydefi}
A real valued function $v^0$ defined on $[0,T]\times\{1,\dots,M^0\}\times\mathcal{P}$ such that $v^0(\cdot, i^0, \cdot)$ is continuous on $[0,T]\times\{1,\dots,M^0\}\times\mathcal{P}$ for all $i^0 \in \{1,\dots,M^0\}$ is said to be a viscosity subsolution (resp. supersolution) if for any $(t,i^0,x) \in [0,T]\times\{1,\dots,M^0\}\times\mathcal{P}$ and any $\mathcal{C}^\infty$ function $\theta$ defined on $[0,T]\times\mathcal{P}$ such that the function $(v^0(\cdot, i^0, \cdot) - \theta)$ attains a maximum (resp. minimum) at $(t,x)$ and $v^0(t,i^0,x) = \theta(t,x)$, the following inequalities holds:
\begin{align*}
0\le \text{(resp.}\ge)\;\;&\; \partial_t \theta(t, x) + \inf_{\alpha^0\in A^0}\bigg\{f^0(t, i^0, \alpha^0, x) + \sum_{j^0\neq i^0} [v^0(t,j^0, x) - v^0(t,i^0, x)] q^0(t,i^0, j^0, \alpha^0, x)\\
&\;+(1-\sum_{k=1}^{M-1} x_k)  \sum_{k=1}^{M-1} \partial_{x_k}\theta(t,x)  q(t,M, k, \phi(t,M, i^0, x), i^0,\alpha^0, x)\\
&\;+\sum_{i,j=1}^{M-1} \partial_{x_j} \theta(t, x)  x_i  q(t,i,j,\phi(t,i, i^0,x), i^0, \alpha^0,x)\bigg\},\;\text{if}\;\;\;t<T\\
0\le \text{(resp.}\ge)\;\;&g^0(i^0, x) - v^0(t,i^0,x),\;\;\;\text{if}\;\;\; t=T
\end{align*}
If $v^0$ is both a viscosity subsolution and supersolution, we call it a viscosity solution.
\end{definition}

\begin{proof}
Define $\mathcal{C}(\mathcal{P})^{M^0}$ the collection of functions $\theta(i^0, x)$ defined on $\{1,\dots,M^0\}\times\mathcal{P}$ such that $\theta(i^0,\cdot)$ is continuous on $\mathcal{P}$ for all $i^0$. Define the dynamic programming operator $\mathcal{T}_{t,s}$ on $\mathcal{C}(\mathcal{P})^{M^0}$ by:
\begin{equation}\label{dynamicprogoperatordef}
[\mathcal{T}_{t,s}\theta](i^0, x) := \inf_{\balpha^0\leftrightarrow\phi^0}\mathbb{E}\left[\int_{t}^{s} f^0_{\phi^0}(u, X_u^0, \mu_u) du + \theta(X_s^0, \mu_s) | X_t^0 = i^0, \mu_t = x\right]
\end{equation}
where the Markov process $(X_u^0, \mu_u)_{0\le t\le T}$ has infinitesimal generator $\mathcal{G}^0_{\phi^0,\phi}$. Then the value function can be expressed as:
\[
V^0_{\phi}(t,i^0,x) = [\mathcal{T}_{t,T} g^0](i^0, x)
\]
and the dynamic programming principle says that:
\[
V^0_{\phi}(t, i^0, x) = [\mathcal{T}_{t,s} V^0_{\phi}(s, \cdot, \cdot)](i^0, x), \qquad (t,s, i^0, x) \in[0,T]^2\times \{1,\dots,M_0\}\times \mathcal{P}.
\]
We will use the following lemma whose proof we give in the appendix.

\begin{lemma}\label{dynamicprogoperator}
Let $\Phi$ be a function on $[0,T]\times\{1,\dots, M^0\} \times \mathcal{P}$ and $i^0\in\{1,\dots, M^0\}$ such that $\Phi(\cdot, i^0, \cdot)$ is $\mathcal{C}^1$ in $[0,T]\times\mathcal{P}$ and $\Phi(\cdot, j^0, \cdot)$ is continuous in $[0,T]\times\mathcal{P}$ for all $j^0\neq i^0$. Then we have:
\begin{align*}
&\lim_{h \rightarrow 0}\frac{1}{h}\left[(\mathcal{T}_{t,t+h} \Phi(t+h, \cdot, \cdot))(i^0, x) - \Phi(t, i^0, x)\right] \\
&=\partial_t \Phi(t,i^0, x) + \inf_{\alpha^0\in A^0}\bigg\{f^0(t, i^0, \alpha^0, x) +(1-\sum_{k=1}^{M-1} x_k)  \sum_{k=1}^{M-1} \partial_{x_k} \Phi(t,i^0, x)  q(t,M, k, \phi(t,M, i^0, x), i^0, \alpha^0, x)\\
&\;+\sum_{i,j=1}^{M-1} \partial_{x_j} \Phi(t,i^0, x)  x_i  q(t,i,j,\phi(t,i, i^0,x), i^0,\alpha^0, x)+ \sum_{j^0\neq i^0} [\Phi(t,j^0, x) - \Phi(t,i^0, x)] q^0(t,i^0, j^0, \alpha^0, x)\bigg\}.
\end{align*}
\end{lemma}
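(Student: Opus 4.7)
My plan is to derive the short-time asymptotic expansion of $[\mathcal{T}_{t,t+h}\Phi(t+h,\cdot,\cdot)](i^0,x)$ around $\Phi(t,i^0,x)$ and to identify the coefficient of $h$ with the right-hand side of the lemma. The argument splits into matching upper and lower bounds: an upper bound obtained by restricting the infimum in $\mathcal{T}_{t,t+h}$ to constant feedbacks $\phi^0\equiv\alpha^0$, and a lower bound obtained by a pointwise minimization inside the integrand produced by the Dynkin expansion. Both rely on a single one-step expansion for the expected cost under an arbitrary admissible $\phi^0$.

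The central technical step is to show that, to leading order in $h$,
\[
\mathbb{E}\!\left[\int_t^{t+h} f^0_{\phi^0}(u,X^0_u,\mu_u)\,du + \Phi(t+h,X^0_{t+h},\mu_{t+h}) \,\Big|\, X^0_t = i^0,\, \mu_t = x\right] = \Phi(t,i^0,x) + h\bigl[\partial_t\Phi + H_{\phi^0(t,i^0,x)}\bigr](t,i^0,x) + o(h),
\]
where $H_{\alpha^0}(t,i^0,x)$ denotes the bracketed expression inside the infimum in the lemma evaluated at the constant control $\alpha^0$. To obtain this I would exploit the piecewise-deterministic structure of $(X^0_s,\mu_s)$ under $\mathcal{G}^0_{\phi^0,\phi}$: between jumps of $X^0$ the measure $\mu_s$ follows the ODE driven by the vector field $\mathbf{v}$ from the proof of Proposition \ref{existfeller}, while $X^0$ jumps from $i^0$ to $j^0$ at rate $q^0_{\phi^0}(s,i^0,j^0,\mu_s)$. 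Let $\tau$ denote the first jump time of $X^0$ and condition on $\{\tau>t+h\}$ versus $\{\tau\le t+h\}$. On the no-jump event, of conditional probability $1 - h\sum_{j^0\neq i^0}q^0_{\phi^0}(t,i^0,j^0,x) + o(h)$, the process stays at $i^0$, so a first-order Taylor expansion of $\Phi(t+h,i^0,\mu_{t+h})$, legitimate thanks to the $C^1$ regularity of $\Phi(\cdot,i^0,\cdot)$, produces the $\partial_t\Phi + \mathbf{v}\cdot\nabla_x\Phi$ contribution. On the complementary event, conditioning on the post-jump state $j^0$ and using the continuity of $\Phi(\cdot,j^0,\cdot)$ yields at leading order the term $h\sum_{j^0\neq i^0}q^0_{\phi^0}(t,i^0,j^0,x)\Phi(t,j^0,x)$, which together with the compensator coming from the survival factor reproduces the jump part of $H_{\phi^0(t,i^0,x)}$; the running-cost integral contributes $h f^0(t,i^0,\phi^0(t,i^0,x),x) + o(h)$ by continuity.

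Once the expansion is in hand, the upper bound follows by specializing to $\phi^0\equiv\alpha^0$ for each fixed $\alpha^0 \in A_0$, dividing by $h$, passing to $\limsup_{h\to 0}$, and then taking the infimum over $\alpha^0$. For the lower bound, the pointwise inequality $H_{\phi^0(u, X^0_u, \mu_u)}(u, X^0_u, \mu_u) \ge \inf_{\alpha^0\in A_0}H_{\alpha^0}(u, X^0_u, \mu_u)$ inside the Dynkin integrand, combined with uniform continuity in $(u,y)$ of $\inf_{\alpha^0}H_{\alpha^0}(u,i^0,y)$ (a consequence of the uniform-in-$\alpha^0$ Lipschitz regularity provided by Hypothesis \ref{lipassump}), yields the matching lower bound at $(t,i^0,x)$. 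The principal obstacle is to guarantee the uniformity in $\phi^0$ of the $o(h)$ remainders; this is handled by combining the boundedness of $q$ from Hypothesis \ref{boundassump}, the Lipschitz regularity of $q^0$ and $f^0$ from Hypothesis \ref{lipassump}, and the uniform continuity on the compact set $[0,T]\times\cP$ of $\Phi(\cdot,i^0,\cdot)$ together with $\partial_t\Phi(\cdot,i^0,\cdot)$ and $\nabla_x\Phi(\cdot,i^0,\cdot)$, which collectively absorb all dependence on $\phi^0$ into lower-order terms.
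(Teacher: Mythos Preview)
Your approach is correct and takes a genuinely different route from the paper's. The paper proceeds in two stages: it first assumes $\Phi(\cdot,j^0,\cdot)$ is $C^1$ for \emph{every} $j^0$, so that Dynkin's formula applies directly (the full generator $\mathcal{G}^0_{\phi^0,\phi}\Phi$ is then well defined), and the lower bound is obtained by picking an $h^2$-optimal feedback $\phi^0_h$, writing the Dynkin integral, and minimizing pointwise inside it. It then removes the extra smoothness by Weierstrass approximation of $\Phi(\cdot,j^0,\cdot)$ for $j^0\neq i^0$, sandwiching $I_h$ via monotonicity of $\mathcal{T}_{t,t+h}$ and letting the approximation parameter tend to zero. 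Your argument instead exploits the piecewise-deterministic structure of $(X^0,\mu)$ to produce a Dynkin-type representation directly under the mixed regularity hypothesis: on the no-jump event $X^0$ stays at $i^0$, so only $\Phi(\cdot,i^0,\cdot)$ is ever differentiated along the deterministic flow of $\mu$, while on the jump event mere continuity of $\Phi(\cdot,j^0,\cdot)$ suffices to extract the leading-order term. This bypasses the approximation step entirely.

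One small caution on presentation: the displayed ``central technical step'' with leading term $h\,H_{\phi^0(t,i^0,x)}(t,i^0,x)$ is literally correct only for continuous $\phi^0$, which is fine for the constant feedbacks used in the upper bound. For the lower bound the right object is the integral form $\int_t^{t+h}[\partial_t\Phi+H_{\phi^0(s,i^0,\mu_s)}](s,i^0,\mu_s)\,ds+o(h)$ with the $o(h)$ uniform in $\phi^0$; your pointwise minimization then removes all $\phi^0$-dependence (the residual dependence through $\mu_s$ is harmless since $|\mu_s-x|\le Ch$ uniformly), and continuity of $(s,y)\mapsto\inf_{\alpha^0}H_{\alpha^0}(s,i^0,y)$ finishes the job. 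The paper's route is more routine but needs the extra approximation layer; yours is more structural and self-contained.
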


We now prove the subsolution property. Let $\theta$ be a function defined on $[0,T]\times\mathcal{P}$ such that $(V^0(\cdot, i^0, \cdot) - \theta)$ attains maximum at $(t,x)$ and $V^0(t,i^0,x) = \theta(t,x)$. Define the function $\Phi$ on $[0,T]\times\{1,\dots, M^0\}\times\mathcal{P}$ by $\Phi(\cdot, i^0, \cdot) := \theta$ and $\Phi(\cdot, j^0, \cdot) := V^0(\cdot, j^0, \cdot)$ for $j^0\neq i^0$. Then clearly $\Phi \ge V^0$, which implies:
\[
(\mathcal{T}_{t,s} \Phi(s, \cdot, \cdot))(i^0, x) \ge (\mathcal{T}_{t,s} V^0(s, \cdot, \cdot))(i^0, x)
\]
By the dynamic programming principle and the fact that $\Phi(t,i^0, x) = V^0(t,i^0, x)$ we have:
\[
\lim_{h \rightarrow 0}\frac{1}{h}\left[(\mathcal{T}_{t,s} \Phi(s, \cdot, \cdot))(i^0, x) - \Phi(t, i^0, x)\right] \ge 0.
\]
Then applying the lemma we obtain the desired inequality. The viscosity property for supersolution can be checked in exactly the same way.
\end{proof}
For later reference, we state the comparison principle for the HJB equation we just derived. Again, its proof is postponed to the appendix.

\begin{theorem}\label{compmajor}(Comparison Principle)
Let us assume that the feedback function $\phi$ is Lipschitz, and let $w$ (resp. v) be a viscosity subsolution (resp. supersolution) of the equation (\ref{hjbmajor}). Then we have $w\le v$.
\end{theorem}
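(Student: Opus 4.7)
The plan is the doubling-of-variables argument for first-order Hamilton--Jacobi equations, adapted to the weakly coupled system over the finite index set $\{1,\dots,M^0\}$. I would argue by contradiction: assume $\sup(w - v) > 0$ on the compact set $[0,T] \times \{1,\dots, M^0\} \times \cP$. The terminal ordering $w(T, i^0, x) \le g^0(i^0, x) \le v(T, i^0, x)$ is built into the viscosity definition, so I may fix $\eta > 0$ small enough that $\sup(w - v - \eta(T-t)) > 0$ and, for $\epsilon > 0$, introduce the penalized functional
\[
\Psi_\epsilon(t, s, i^0, x, y) := w(t, i^0, x) - v(s, i^0, y) - \frac{|x-y|^2}{2\epsilon} - \frac{(t-s)^2}{2\epsilon} - \eta(T-t)
\]
on $[0,T]^2 \times \{1,\dots, M^0\} \times \cP^2$. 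By compactness, $\Psi_\epsilon$ attains its maximum at some point $(t_\epsilon, s_\epsilon, i^0_\epsilon, x_\epsilon, y_\epsilon)$, and the standard penalty estimates yield $|x_\epsilon - y_\epsilon|^2/\epsilon + (t_\epsilon - s_\epsilon)^2/\epsilon \to 0$ while $\max \Psi_\epsilon$ stays bounded below by a positive constant; combined with the terminal ordering this forces $t_\epsilon, s_\epsilon < T$ for all small $\epsilon$.

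I would then apply the subsolution inequality for $w$ at $(t_\epsilon, i^0_\epsilon, x_\epsilon)$ with the $C^\infty$ test function $\theta_w(t,x) := \tfrac{1}{2\epsilon}|x-y_\epsilon|^2 + \tfrac{1}{2\epsilon}(t-s_\epsilon)^2 + \eta(T-t)$, and the supersolution inequality for $v$ at $(s_\epsilon, i^0_\epsilon, y_\epsilon)$ with $\theta_v(s,y) := -\tfrac{1}{2\epsilon}|x_\epsilon - y|^2 - \tfrac{1}{2\epsilon}(t_\epsilon - s)^2$. By design the two spatial gradients coincide, $\nabla_x \theta_w(t_\epsilon, x_\epsilon) = \nabla_y \theta_v(s_\epsilon, y_\epsilon) = (x_\epsilon - y_\epsilon)/\epsilon =: p_\epsilon$, and the time derivatives satisfy $\partial_t \theta_w(t_\epsilon, x_\epsilon) = (t_\epsilon - s_\epsilon)/\epsilon - \eta$ and $\partial_s \theta_v(s_\epsilon, y_\epsilon) = (t_\epsilon - s_\epsilon)/\epsilon$. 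Subtracting the two inequalities and using $\inf \Phi_1 - \inf \Phi_2 \le \sup(\Phi_1 - \Phi_2)$ gives
\[
\eta \le \sup_{\alpha^0 \in A_0}\Bigl\{\bigl[f^0(t_\epsilon, i^0_\epsilon, \alpha^0, x_\epsilon) - f^0(s_\epsilon, i^0_\epsilon, \alpha^0, y_\epsilon)\bigr] + \Delta_{\mathrm{coupl}}(\alpha^0) + \Delta_{\mathrm{drift}}(\alpha^0)\Bigr\},
\]
where $\Delta_{\mathrm{coupl}}$ collects the difference of the $q^0$ coupling sums and $\Delta_{\mathrm{drift}}$ the difference of the $q$ drift sums at the two points.

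The main obstacle is $\Delta_{\mathrm{coupl}}$, since the subsolution inequality for $w$ involves the values of $w$ at the other states $j^0 \ne i^0_\epsilon$. The key observation is that $i^0_\epsilon$ is itself a maximizer of the map $i^0 \mapsto w(t_\epsilon, i^0, x_\epsilon) - v(s_\epsilon, i^0, y_\epsilon)$, so
\[
w(t_\epsilon, j^0, x_\epsilon) - w(t_\epsilon, i^0_\epsilon, x_\epsilon) \le v(s_\epsilon, j^0, y_\epsilon) - v(s_\epsilon, i^0_\epsilon, y_\epsilon), \qquad j^0 \ne i^0_\epsilon.
\]
Combined with the off-diagonal positivity of $q^0$ (Hypothesis \ref{transratemajor}), the Lipschitz regularity of $q^0$ (Hypothesis \ref{lipassump}), and the uniform boundedness of $w, v$ on the compact domain, this yields $\Delta_{\mathrm{coupl}}(\alpha^0) \le C(|t_\epsilon - s_\epsilon| + |x_\epsilon - y_\epsilon|)$ uniformly in $\alpha^0$. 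The $f^0$ difference is bounded by $L(|t_\epsilon - s_\epsilon| + |x_\epsilon - y_\epsilon|)$ by Hypothesis \ref{lipassump}, and using the Lipschitz regularity of $q$ together with the assumed Lipschitz property of $\phi$ (the precise place where the hypothesis on $\phi$ enters), $|\Delta_{\mathrm{drift}}(\alpha^0)|$ is bounded by $C|p_\epsilon|(|t_\epsilon - s_\epsilon| + |x_\epsilon - y_\epsilon|) \le C'(|x_\epsilon-y_\epsilon|^2 + |x_\epsilon-y_\epsilon||t_\epsilon-s_\epsilon|)/\epsilon$. All terms on the right-hand side vanish as $\epsilon \to 0$ by the penalty estimates (the cross term by AM--GM), producing $\eta \le 0$ and the desired contradiction.
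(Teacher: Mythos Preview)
Your proposal is correct and follows the same doubling-of-variables strategy as the paper's proof, with the same Lipschitz estimates on $f^0$, $q^0$, $q\circ\phi$ and the same use of off-diagonal positivity of $q^0$ to control the coupling term. The only cosmetic difference is that you maximize the penalized functional jointly over $i^0$ (obtaining the key inequality $w(t_\epsilon,j^0,x_\epsilon)-w(t_\epsilon,i^0_\epsilon,x_\epsilon)\le v(s_\epsilon,j^0,y_\epsilon)-v(s_\epsilon,i^0_\epsilon,y_\epsilon)$ directly), whereas the paper fixes $i^0$, passes to the limit to identify $N_{i^0}$, and then selects the maximizing index; your variant is slightly more streamlined but otherwise identical in substance.
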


We now turn to the representative minor agent's optimization problem assuming that the major player uses the feedback function $\phi^0$ and all the other minor players use the feedback function $\phi$. We define the value function:
\[
V_{\phi^0,\phi}(t,i, i^0, x) := \inf_{\o\balpha\leftrightarrow \o\phi}\mathbb{E}\left[\int_{t}^{T} f_{\phi^0,\o\phi}(s, X_s,X^0_s,\mu_s) ds + g(X_T, X_T^0, \mu_T)  |X_t = i, X_t^0 = i^0, \mu_t = x\right]
\]
where the Markov process $(X_t,X^0_t,\mu_t)_{0\le t\le T}$ has infinitesimal generator $\cG_{\phi^0,\phi,\o\phi}$.
In line with the analysis of the major player's problem, we can show that $V_{\phi^0,\phi}$ is the unique viscosity solution to a coupled system of PDEs.

\begin{theorem}\label{hjbtheominor}
Assume that for all $i \in \{1,\dots, M\}$ and $i^0 \in \{1,\dots, M^0\}$, the mapping $(t,x) \rightarrow V_{\phi^0,\phi}(t,i,i^0, x)$ is continuous on $[0,T]\times \mathcal{P}$. Then $V_{\phi^0,\phi}$ is a viscosity solution to the system of PDEs:
\begin{equation}\label{hjbminor}
\begin{aligned}
&0=\partial_t v(t,i, i^0, x) \\
&\;\;+\inf_{\o\alpha\in A}\bigg\{f(t,i, \o\alpha, i^0, \phi^0(t, i^0, x), x) + \sum_{j\neq i} [v(t,j, i^0, x) - v(t,i, i^0, x)] q(t, i, j, \o\alpha, i^0, \phi^0(t, i^0, x), x)\bigg\}\\
&\;\;+ \sum_{j^0\neq i^0}[v(t,i, j^0, x) - v(t,i, i^0, x)] q^0_{\phi^0}(t, i^0, j^0, x) +(1-\sum_{k=1}^{M-1} x_k)  \sum_{k=1}^{M-1} \partial_{x_k} v(t,i,i^0,x)  q_{\phi^0,\phi}(t,M, k, i^0, x)\\
&\;\;+\sum_{i,j=1}^{M-1} \partial_{x_j} v(t,i, i^0, x)  x_i  q_{\phi^0,\phi}(t,i,j, i^0, x), \hskip 25pt (i,i_0,t,x)\in \{1,\dots, M\}\times\{1,\dots, M^0\} \times [0,T[ \times \mathcal{P}\\
&v(T,i,i^0, x) = g(i,i^0, x),\qquad (i,i_0,x)\in  \{1,\dots, M\}\times\{1,\dots, M^0\}\times\mathcal{P}.
\end{aligned}
\end{equation}
Moreover, if the feedback functions $\phi^0$ and $\phi$ are Lipschitz, then the above system of PDEs satisfies the comparison principle.
\end{theorem}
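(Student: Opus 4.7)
The plan is to mirror the proof of Theorem \ref{hjbtheomajor} line by line. The only genuinely new element is that the singled-out minor player's state $X_t$ is itself a controlled jump process, jointly Markov with $(X^0_t, \mu_t)$, and it is the one whose transition rates depend on the control $\o\alpha$. Accordingly, I would introduce the dynamic programming operator on continuous functions of $(i, i^0, x) \in \{1,\dots,M\}\times\{1,\dots,M^0\}\times\mathcal{P}$ by
\[
[\mathcal{T}_{t,s}\theta](i,i^0,x) := \inf_{\o\balpha \leftrightarrow \o\phi}\mathbb{E}\left[\int_t^s f_{\phi^0,\o\phi}(u, X_u, X^0_u, \mu_u)\,du + \theta(X_s, X^0_s, \mu_s) \,\Big|\, X_t=i,\, X^0_t=i^0,\, \mu_t=x\right],
\]
where the triple $(X, X^0, \mu)$ is the Markov process with infinitesimal generator $\mathcal{G}_{\phi^0,\phi,\o\phi}$ from Theorem \ref{convtheominor}. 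Standard arguments then give $V_{\phi^0,\phi}(t,i,i^0,x) = [\mathcal{T}_{t,T} g](i,i^0,x)$ together with the flow property $V_{\phi^0,\phi}(t,\cdot,\cdot,\cdot) = \mathcal{T}_{t,s} V_{\phi^0,\phi}(s,\cdot,\cdot,\cdot)$.

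The main technical step is an analog of Lemma \ref{dynamicprogoperator}: for a function $\Phi$ that is $\mathcal{C}^1$ in $(t,x)$ at the chosen discrete state $(i,i^0)$ and merely continuous at other discrete states, $h^{-1}\{[\mathcal{T}_{t,t+h}\Phi(t+h,\cdot,\cdot,\cdot)](i,i^0,x) - \Phi(t,i,i^0,x)\}$ converges as $h \to 0^+$ to the right-hand side of \eqref{hjbminor} with $v$ replaced by $\Phi$. The crucial observation, visible already in the generator $\mathcal{G}_{\phi^0,\phi,\o\phi}$, is that $\o\alpha$ enters only through the singled-out player's jump rate $q(t,i,j,\o\alpha,i^0,\phi^0(t,i^0,x),x)$; the jumps of the major player and the McKean--Vlasov-type drift of $\mu_t$ involve only $\phi^0$ and $\phi$. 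Consequently, the infimum over $\o\alpha$ can be pulled inside and isolates precisely the two-term expression in \eqref{hjbminor}. With the lemma in hand, the sub- and supersolution properties follow exactly as in Theorem \ref{hjbtheomajor}: given a test function $\theta$ with $V_{\phi^0,\phi}(\cdot,i,i^0,\cdot) - \theta$ attaining a local extremum at $(t,x)$ with matching value, one extends it to a global test function $\Phi$ equal to $\theta$ at $(i,i^0)$ and to $V_{\phi^0,\phi}$ at other discrete states, applies the flow property to obtain the sign of the incremental ratio, and passes to the limit.

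The comparison principle is handled, as in Theorem \ref{compmajor}, by doubling of variables in the continuous variables $(t,x)$, with the discrete variables $(i,i^0)$ treated by localizing $w-v$ at a pair $(\hat i,\hat i^0)$ realizing $\max_{i,i^0}(w-v)(t,i,i^0,x)$. At such a pair the non-local terms become cooperative: $(w-v)(t,\hat i,j^0,x) \le (w-v)(t,\hat i,\hat i^0,x)$ and $(w-v)(t,j,\hat i^0,x) \le (w-v)(t,\hat i,\hat i^0,x)$, so the jump contributions have the favorable sign when the subsolution inequality for $w$ is subtracted from the supersolution inequality for $v$. The Lipschitz assumption on $\phi^0,\phi$ combined with Hypothesis \ref{lipassump} guarantees that $q^0_{\phi^0}$, $q_{\phi^0,\phi}$, $f^0_{\phi^0}$, and $f_{\phi^0,\phi}$ are Lipschitz in $(t,x)$, which is what is needed to close the usual penalization estimates. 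The main obstacle I expect is obtaining a uniform Lipschitz bound on the Hamiltonian associated with the controlled rate $q(\cdot,\o\alpha,\cdot)$ via a standard selection argument for the infimum; this, however, is routine under Hypotheses \ref{lipassump}--\ref{boundassump}. As with Theorem \ref{compmajor}, the detailed verification is deferred to the appendix.
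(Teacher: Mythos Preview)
Your proposal is correct and follows essentially the same approach as the paper. The paper does not give a separate proof of Theorem \ref{hjbtheominor}; it simply states that the argument is ``in line with the analysis of the major player's problem,'' i.e.\ exactly the mirroring of Theorem \ref{hjbtheomajor} and Theorem \ref{compmajor} that you describe, with the discrete index enlarged from $i^0$ to the pair $(i,i^0)$ and the infimum over $\o\alpha$ isolating the two controlled terms in the generator $\mathcal{G}_{\phi^0,\phi,\o\phi}$.
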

It turns out that the value functions $V^0_{\phi}$ and $V_{\phi^0,\phi}$ are Lipschitz in $(t,x)$. To establish this regularity property and estimate the Lipschitz constants, we need to first study the regularity of the value functions for the finite player games, and control the convergence in the regime of large games. We will state these results in Section \ref{se:chaos}, where we deal with the propagation of chaos and highlight more connections between finite player games and mean field games. 


We conclude this section with a result which we will use frequently in the sequel. To state it, we denote $J^0_{\phi^0,\phi}$ the expected cost of the major player when it uses the feedback function $\phi^0$ and  the minor players all use the feedback function $\phi$. Put differently:
\[
J^0_{\phi^0,\phi}(t,i^0,x) := \mathbb{E}\left[\int_{t}^{T}f^0_{\phi^0}(s, X_s^0, \mu_s) ds + g^0(X_T^0, \mu_T) | X_t^0 = i^0, \mu_t = x\right]
\]
where the Markov process $(X_s^0, \mu_s)_{t\le s\le T}$ has infinitesimal generator $\mathcal{G}^0_{\phi^0,\phi}$. Then by definition, we have $V_{\phi}^0(t,i^0, x) = \inf_{\balpha^0\leftrightarrow\phi^0} J^0_{\phi^0,\phi}(t, i^0, x)$. Similarly, we denote $J_{\phi^0,\phi,\o\phi}$ the expected cost of the representative minor player when it uses the feedback function $\o\phi$, while the major player uses the feedback function $\phi^0$ and all the other minor players use the same feedback function $\phi$:
\[
J_{\phi^0,\phi,\o\phi}(t,i,i^0,x) := \mathbb{E}\left[\int_{t}^{T}f_{\phi^0,\o\phi}(s, X_s, X_s^0,  \mu_s) ds + g(X_T, X_T^0, \mu_T) | X_t =i, X_t^0 = i^0, \mu_t = x\right]
\]
where the Markov process $(X_s^0,X^s, \mu_s)_{t\le s\le T}$ has infinitesimal generator $\mathcal{G}_{\phi^0,\phi,\o\phi}$.

\begin{proposition}\label{pdecharacterizepayoff}
If the feedback functions $\phi^0$, $\phi$ and $\o\phi$ are Lipschitz, then $J^0_{\phi^0,\phi}$ and $J_{\phi^0,\phi,\o\phi}$ are respectively continuous viscosity solutions of the PDEs (\ref{majorJpde}) and (\ref{minorJpde})
\begin{equation}
\label{majorJpde}
\begin{cases}
&0 =\;[\mathcal{G}^0_{\phi^0,\phi} v^0](t, i^0, x) + f^0_{\phi^0}(t, i^0, x)\\
&0 =\;g^0(i^0, x) - v^0(T,i^0, x),\qquad (i^0,x)\in\{1,\dots, M^0\}\times \mathcal{P}.
\end{cases}
\end{equation}

\begin{equation}
\label{minorJpde}
\begin{cases}
&0 =\; [\mathcal{G}_{\phi^0,\phi,\o\phi} v](t, i,i^0, x) + f_{\phi^0,\o\phi}(t, i, i^0,  x)\\
&0 =\;g(i,i^0, x) - v(T,i,i^0, x),\qquad (i,i_0,x)\in  \{1,\dots, M\}\times\{1,\dots, M^0\}\times\mathcal{P}.
\end{cases}
\end{equation}
Moreover, the PDEs (\ref{majorJpde}) and (\ref{minorJpde}) satisfy the comparison principle.
\end{proposition}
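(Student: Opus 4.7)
Proposition \ref{pdecharacterizepayoff} is a linear companion of Theorems \ref{hjbtheomajor}--\ref{compmajor}: the controls are frozen at fixed Lipschitz feedbacks rather than optimized, so everything reduces to Feynman--Kac/Dynkin arguments for the Feller semigroups generated by $\mathcal{G}^0_{\phi^0,\phi}$ and $\mathcal{G}_{\phi^0,\phi,\o\phi}$. I would proceed in the same three-step pattern: continuity of the payoffs, then the viscosity property, then the comparison principle.

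For continuity, I would observe that Lipschitz regularity of $\phi^0$, $\phi$ and $\o\phi$ together with Hypotheses \ref{lipassump}--\ref{boundaryassump} makes the coefficients $q^0_{\phi^0}$, $q_{\phi^0,\phi}$, $q_{\phi^0,\o\phi}$, $f^0_{\phi^0}$ and $f_{\phi^0,\o\phi}$ all Lipschitz and bounded. Proposition \ref{existfeller} then delivers a Feller semigroup associated with $\mathcal{G}^0_{\phi^0,\phi}$ on $E^\infty$, and exactly the same perturbation-plus-characteristics argument, now with one additional bounded finite-difference operator on the $X$-coordinate (associated with the jump rates $q_{\phi^0,\o\phi}$), gives a Feller semigroup for $\mathcal{G}_{\phi^0,\phi,\o\phi}$ on $[0,T]\times\{1,\dots,M\}\times\{1,\dots,M^0\}\times \mathcal{P}$. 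Expressing $J^0_{\phi^0,\phi}$ as this semigroup applied to the bounded continuous terminal cost $g^0$ plus the integrated semigroup of the bounded running cost $f^0_{\phi^0}$, strong continuity of the semigroup yields joint continuity of $J^0_{\phi^0,\phi}$ in $(t, x)$, and identically for $J_{\phi^0,\phi,\o\phi}$.

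For the viscosity property I would rerun the proof of Theorem \ref{hjbtheomajor}, but use the Markov flow identity
\[
J^0_{\phi^0,\phi}(t, i^0, x) = \mathbb{E}\Bigl[\int_t^{t+h} f^0_{\phi^0}(u, X_u^0, \mu_u)\, du + J^0_{\phi^0,\phi}(t+h, X^0_{t+h}, \mu_{t+h}) \,\Big|\, X_t^0=i^0, \mu_t=x\Bigr]
\]
in place of the dynamic programming identity. The analogue of Lemma \ref{dynamicprogoperator} then collapses to the Dynkin-type identity
\[
\lim_{h \to 0^+}\frac{1}{h}\Bigl[\mathbb{E}[\Phi(t+h, X^0_{t+h}, \mu_{t+h}) \mid X_t^0=i^0, \mu_t=x] - \Phi(t, i^0, x)\Bigr] = [\mathcal{G}^0_{\phi^0,\phi}\Phi](t, i^0, x)
\]
for any $\Phi \in C^1(E^\infty)$, a statement that involves no infimum and follows directly from the Feller construction. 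Touching $J^0_{\phi^0,\phi}(\cdot, i^0, \cdot)$ from above or below by a smooth test function in Definition \ref{viscositydefi} then produces the viscosity sub/supersolution inequalities for \eqref{majorJpde}; the same argument applied to the triple-process generator $\mathcal{G}_{\phi^0,\phi,\o\phi}$ handles \eqref{minorJpde}.

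For the comparison principle, I would note that \eqref{majorJpde} is simply \eqref{hjbmajor} with the infimum over $\alpha^0 \in A^0$ restricted to the singleton $\{\phi^0(t, i^0, x)\}$, and likewise \eqref{minorJpde} collapses \eqref{hjbminor} at $\o\alpha = \o\phi(t, i, i^0, x)$. Because $\phi^0$, $\phi$ and $\o\phi$ are Lipschitz, the resulting degenerate Hamiltonians inherit the Lipschitz dependence on $(t, x)$ that is the sole structural input to the proofs of Theorems \ref{compmajor} and \ref{hjbtheominor}, so those proofs apply verbatim. The main obstacle I anticipate is purely bookkeeping: verifying that the Feller construction of Proposition \ref{existfeller} genuinely extends to the triple process $(X_t, X^0_t, \mu_t)$, and that the Lipschitz dependence required to run the comparison argument is preserved when the infimum is collapsed to a point-evaluation. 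Both items should be routine given the estimates already available for the generators.
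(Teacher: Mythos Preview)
Your proposal is correct and follows essentially the same approach as the paper: continuity via the Feller property of the underlying processes (the paper cites Theorems \ref{convtheomajor} and \ref{convtheominor} rather than Proposition \ref{existfeller} directly, but these rest on the same construction), the viscosity property by rerunning the argument of Theorem \ref{hjbtheomajor} with the Markov/Dynkin identity replacing the dynamic programming principle, and the comparison principle by observing that \eqref{majorJpde} and \eqref{minorJpde} are the HJB equations with the infimum collapsed to a Lipschitz point-evaluation, so the proof of Theorem \ref{compmajor} goes through with only cosmetic changes. The paper's own proof is in fact just a two-sentence sketch pointing to exactly these three ingredients, so your write-up is a faithful expansion of it.
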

\begin{proof}
The continuity of $J^0_{\phi^0,\phi}$ and $J_{\phi^0,\phi,\o\phi}$ follows from the fact that $(X_t^0, \mu_t)$ and $(X_t, X_t^0,\mu_t)$ are Feller processes, which we have shown in Theorem \ref{convtheomajor} and Theorem \ref{convtheominor}. The viscosity property can be shown using the exact same technique as in the proof of Theorem \ref{hjbtheomajor}. Finally the comparison principle is a consequence of the Lipschitz property of $f^0,f,q^0,q,\phi^0,\phi,\o\phi$ and can be shown by slightly modifying the proof of Theorem \ref{compmajor}. We leave the details of the proof to the reader.
\end{proof}

\section{Existence of Nash Equilibria}
\label{se:Nash}

In this section, we prove existence of Nash equilibria when the minor player's jump rates and cost functions do not depend upon the major player's control. We work under the following assumption:

\begin{hypothesis}\label{assumpopt1}
Hypothesis \ref{lipassump} and Hypothesis \ref{boundassump} are in force. In addition, the transition rate function $q$ and the cost function $f$ for the minor player do not depend upon the major player's control $\alpha^0\in A_0$. 
\end{hypothesis}

The following assumptions will guarantee the existence of optimal strategies for both the major player and representative minor player.

\begin{hypothesis}\label{assumpopt2}
For all $i^0 = 1,\dots, M^0$, $(t,x) \in [0,T]\times\mathcal{P}$ and $v^0\in\mathbb{R}^{M^0}$, the function $\alpha^0 \rightarrow f^0(t,i^0,\alpha^0,x) + \sum_{j^0\neq i^0}(v^0_{j^0} - v^0_{i^0}) q^0(t,i^0, j^0, \alpha^0, x)$ has a unique maximizer in $A^0$ denoted as $\hat\alpha^0(t,i^0,x,v^0)$. Additionally, 
$\hat\alpha^0$ is Lipschitz in $(t,x,v^0)$ for all $i^0=1,\dots,M^0$ with common Lipschitz constant $L_{\alpha^0}$.
\end{hypothesis}

\begin{hypothesis}\label{assumpopt3}
For all $i = 1, \dots, M$, $i^0 = 1,\dots, M^0$, $(t,x) \in [0,T]\times\mathcal{P}$ and $v\in\mathbb{R}^{M \times M^0}$, the function $\alpha \rightarrow f(t,\alpha,i, i^0,x) + \sum_{j\neq i}(v_{j,i^0} - v_{i,i^0}) q(t,i, j, \alpha, i^0, x)$ has a unique maximizer in $A$ denoted as $\hat\alpha(t,i,i^0,x,v)$. Additionally, 
$\hat\alpha$ is Lipschitz in $(t,x,v)$  for all $i^0=1,\dots,M^0$ and $i=1,\dots,M$ with common Lipschitz constant $L_\alpha$.
\end{hypothesis}

\begin{proposition}\label{optimalresponseprop}
Under Hypothesis \ref{assumpopt1} - \ref{assumpopt3}, we have:

(i) For any Lipschitz feedback function $\phi$ for the representative minor player, the best response $\bphi^{0*}(\phi)$ of the major player exists and is given by:
\begin{equation}\label{optimalresponse1}
\bphi^{0*}(\phi)(t,i^0,x) = \hat\alpha^0(t, i^0, x, V^0_{\phi}(t,\cdot,x))
\end{equation}
where $\hat\alpha^0$ is the minimizer defined in Hypothesis \ref{assumpopt2} and $V^0_{\phi}$ is the value function of the major player's optimization problem.

(ii) For any Lipschitz feedback function $\phi^0$ for the major player and $\phi$ for the other minor players, the best response $\bphi^*(\phi^0, \phi)$ 
of the representative minor player exists and is given by:
\begin{equation}\label{optimalresponse2}
\bphi^*(\phi^0,\phi)(t,i,i^0,x) = \hat\alpha(t,i, i^0, x, V_{\phi^0,\phi}(t,\cdot,\cdot,x))
\end{equation}
where $\hat\alpha$ is the minimizer defined in Hypothesis \ref{assumpopt3} and $V_{\phi^0,\phi}$ is the value function of representative minor player's optimization problem.
\end{proposition}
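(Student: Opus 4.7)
The plan is a verification argument carried out entirely in the viscosity sense, leveraging the HJB characterization of $V^0_\phi$ (Theorem \ref{hjbtheomajor}), the linear PDE characterization of $J^0_{\phi^0, \phi}$ (Proposition \ref{pdecharacterizepayoff}), and uniqueness of viscosity solutions via the respective comparison principles. The two parts of the proposition are parallel, so I describe part (i) in detail and indicate that part (ii) follows by identical reasoning.

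For part (i), set $\hat\phi^0(t, i^0, x) := \hat\alpha^0(t, i^0, x, V^0_\phi(t, \cdot, x))$. The first step is to confirm that $\hat\phi^0$ is a Lipschitz feedback, so that $J^0_{\hat\phi^0, \phi}$ is well defined and, by Proposition \ref{pdecharacterizepayoff}, is a continuous viscosity solution of the linear equation (\ref{majorJpde}) for which the comparison principle holds. By Hypothesis \ref{assumpopt2}, $\hat\alpha^0$ is Lipschitz in $(t, x, v^0)$, so Lipschitz regularity of $\hat\phi^0$ reduces to Lipschitz regularity of $V^0_\phi$ in $(t,x)$, uniformly in the discrete argument. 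This is exactly the regularity announced in the excerpt as being established in Section \ref{se:chaos}, and I take it as an input to the present argument.

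The core of the proof is to show that $V^0_\phi$ is itself a viscosity solution of (\ref{majorJpde}) with $\phi^0 = \hat\phi^0$. For the subsolution inequality, let $\theta$ touch $V^0_\phi(\cdot, i^0, \cdot)$ from above at $(t,x)$ with equality. By Theorem \ref{hjbtheomajor}, the HJB subsolution inequality holds at $\theta$; under Hypothesis \ref{assumpopt1} the drift terms involving $q$ are independent of $\alpha^0$, so the infimum inside the HJB reduces exactly to the scalar minimization of Hypothesis \ref{assumpopt2} with $v^0 = V^0_\phi(t,\cdot,x)$, and is therefore attained at $\hat\alpha^0(t, i^0, x, V^0_\phi(t,\cdot,x)) = \hat\phi^0(t, i^0, x)$. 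Substituting this minimizer collapses the infimum and yields precisely the subsolution inequality for (\ref{majorJpde}) at $\phi^0 = \hat\phi^0$. The supersolution direction is symmetric: for $\theta$ touching $V^0_\phi(\cdot, i^0, \cdot)$ from below, the HJB supersolution inequality applied at the particular choice $\alpha^0 = \hat\phi^0(t, i^0, x)$ gives the supersolution inequality for (\ref{majorJpde}), using that for any fixed $\alpha^0$ the Hamiltonian dominates its infimum.

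With both $V^0_\phi$ and $J^0_{\hat\phi^0, \phi}$ viscosity solutions of the same linear PDE (\ref{majorJpde}) with the same terminal condition, the comparison principle from Proposition \ref{pdecharacterizepayoff} forces $V^0_\phi = J^0_{\hat\phi^0, \phi}$. Since $V^0_\phi \le J^0_{\phi^0, \phi}$ for every admissible $\phi^0$ by definition, $\hat\phi^0$ attains the infimum and formula (\ref{optimalresponse1}) holds. Part (ii) is established by repeating these three steps for $\bphi^*(\phi^0, \phi)(t,i,i^0,x) := \hat\alpha(t, i, i^0, x, V_{\phi^0, \phi}(t, \cdot, \cdot, x))$, using Hypothesis \ref{assumpopt3} to identify the closed-loop minimizer, Theorem \ref{hjbtheominor} in place of Theorem \ref{hjbtheomajor}, and the comparison principle for (\ref{minorJpde}). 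The principal obstacle I expect is the Lipschitz regularity of the value functions in the continuous argument, which is the only input borrowed from later in the paper; once it is granted, everything else is a careful transfer of the HJB viscosity property to the closed-loop linear equation, making the argument essentially a soft consequence of the two comparison principles.
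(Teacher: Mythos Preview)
Your proof is correct and follows essentially the same approach as the paper's own proof: define the candidate feedback via the pointwise minimizer, invoke the Lipschitz regularity of the value function from Section \ref{se:chaos} (specifically Corollary \ref{valuefunctionlipschitzconstant}) to make the candidate Lipschitz, observe that $V^0_\phi$ then solves the linear equation (\ref{majorJpde}) with $\phi^0=\hat\phi^0$, and conclude by the comparison principle of Proposition \ref{pdecharacterizepayoff}. Your explicit remark that Hypothesis \ref{assumpopt1} is what allows the $q$-drift terms to be pulled out of the infimum---so that the HJB infimum collapses exactly to the minimization in Hypothesis \ref{assumpopt2}---is a useful clarification that the paper leaves implicit.
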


\begin{proof}
Consider the expected total cost  $J^0_{\phi^{0*}(\phi),\phi}$ of the major player when all the minor players use the feedback function $\phi$ and the major player uses the strategy given by the feedback function $\phi^{0*}(\phi)$ defined by (\ref{optimalresponse1}). Also consider $V^0_{\phi}$ the value function of the major player's optimization problem. By definition of $\phi^{0*}(\phi)$ and the PDE (\ref{hjbmajor}), we see that $V^0_{\phi}$ is a viscosity solution of the PDE (\ref{majorJpde}) with $\phi^0 = \phi^{0*}(\phi)$ and $\phi= \phi$ in Proposition \ref{pdecharacterizepayoff}. To be able to use the comparison principle, we need to show that $\phi^0 = \phi^{0*}(\phi)$ and $\phi$ are Lipschitz. Indeed the Lipschitz property follows from Hypothesis \ref{assumpopt2} and Corollary \ref{valuefunctionlipschitzconstant} (see Section \ref{se:chaos}). Now since $J^0_{\phi^{0*}(\phi),\phi}$ is another viscosity solution for the same PDE, we conclude that $J^0_{\phi^{0*}(\phi),\phi} = V^0_{\phi} = \inf_{\balpha^0\leftrightarrow\phi^0} J^0_{\phi^0, \phi}$ and hence the optimality of $\phi^{0*}(\phi)$. Likewise we can show that $\bphi^*(\phi^0,\phi)$ is the best response of the representative minor player.
\end{proof}
In order to show that the Nash Equilibrium is actually given by a couple of Lipschitz feedback functions, we need an additional assumption on the regularity of value functions.

\begin{hypothesis}\label{assumpopt4}
There exists two constants $L_{\phi^0}, L_{\phi}$, such that for all $L_{\phi^0}$-Lipschitz feedback function $\phi^0$ and $L_{\phi}$-Lipschitz feedback function  $\phi$, $V^0_{\phi}$ is $(L_{\phi^0}/L_{\alpha^0} - 1)$-Lipschitz and $V_{\phi^0,\phi}$ is $(L_{\phi}/L_\alpha - 1)$-Lipschitz.
\end{hypothesis}

The above assumption holds, for example, when the horizon of the game is sufficiently small. We shall provide more details (see Remark \ref{remarkassumptionlipschitz} below) after we reveal important connections between finite player games and mean field games in Section \ref{se:chaos}. We now state and prove existence of Nash equilibrium.

\begin{theorem}
Under Hypothesis \ref{assumpopt1} - \ref{assumpopt4}, there exists a Nash equilibrium in the sense that there exists Lipschitz feedback functions $\hat\phi^0$ and $\hat\phi$ such that:
\[
[\hat\phi^0,\hat\phi] = [\bphi^{0*}(\hat\phi), \bphi^*(\hat\phi^0, \hat\phi)].
\]
\end{theorem}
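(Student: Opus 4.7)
The plan is to apply Schauder's fixed point theorem to the best response map
$\Psi(\phi^0,\phi) := (\bphi^{0,*}(\phi),\,\bphi^*(\phi^0,\phi))$ built in Proposition~\ref{optimalresponseprop}, on a suitable compact convex set of Lipschitz feedback pairs.

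\vskip 3pt\noindent\textbf{Setup and self-mapping.} I would work with
\[
\mathcal{S} := \bigl\{(\phi^0,\phi) : \phi^0 \text{ is } L_{\phi^0}\text{-Lipschitz with values in } A_0,\ \phi \text{ is } L_{\phi}\text{-Lipschitz with values in } A\bigr\},
\]
viewed as a subset of $C([0,T]\times\{1,\dots,M^0\}\times\cP;A_0)\times C([0,T]\times\{1,\dots,M\}\times\{1,\dots,M^0\}\times\cP;A)$ endowed with the uniform norm. Taking $A_0$ and $A$ to be convex and compact (which is in the spirit of Hypotheses~\ref{assumpopt2}--\ref{assumpopt3}, since they guarantee unique maximizers), $\mathcal{S}$ is convex, uniformly bounded, and equi-Lipschitz, hence compact by Arzel\`a--Ascoli. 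For $(\phi^0,\phi)\in\mathcal{S}$, Hypothesis~\ref{assumpopt4} guarantees that $V^0_{\phi}$ and $V_{\phi^0,\phi}$ are $(L_{\phi^0}/L_{\alpha^0}-1)$- and $(L_{\phi}/L_\alpha-1)$-Lipschitz in $(t,x)$. Inserting these into the representations \eqref{optimalresponse1}--\eqref{optimalresponse2} and using the $L_{\alpha^0}$- and $L_{\alpha}$-Lipschitz properties of the selectors $\hat\alpha^0,\hat\alpha$,
\[
\mathrm{Lip}\bigl(\bphi^{0,*}(\phi)\bigr) \le L_{\alpha^0}\bigl(1+\mathrm{Lip}(V^0_\phi)\bigr) \le L_{\phi^0},\qquad \mathrm{Lip}\bigl(\bphi^*(\phi^0,\phi)\bigr)\le L_{\phi},
\]
so that $\Psi$ maps $\mathcal{S}$ into itself. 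This is precisely why the constants $L_{\phi^0},L_{\phi}$ in Hypothesis~\ref{assumpopt4} are tuned the way they are.

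\vskip 3pt\noindent\textbf{Continuity of $\Psi$.} This is the delicate step. It suffices to prove that the maps $\phi\mapsto V^0_\phi$ and $(\phi^0,\phi)\mapsto V_{\phi^0,\phi}$ are continuous from $\mathcal{S}$ to $C$ in the uniform norm, since continuity of $\Psi$ then follows by composing with the Lipschitz selectors. Given $(\phi^0_n,\phi_n)\to(\phi^0,\phi)$ uniformly in $\mathcal{S}$, Hypothesis~\ref{lipassump} yields uniform convergence of the coefficients $q^0_{\phi^0_n}$, $q_{\phi^0_n,\phi_n}$, $f^0_{\phi^0_n}$, $f_{\phi^0_n,\phi_n}$. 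Applying Lemma~\ref{convgeneratorsemigroup} to the associated generators $\cG^0_{\tilde\phi^0,\phi_n}$ and $\cG_{\phi^0_n,\phi_n,\o\phi}$ (which act on a common core $C^1(E^\infty)$) produces convergence of the corresponding Feller semigroups, hence pointwise convergence of the cost functionals $J^0_{\tilde\phi^0,\phi_n}\to J^0_{\tilde\phi^0,\phi}$ for each auxiliary $\tilde\phi^0$ in the Lipschitz class, and similarly for $J_{\phi^0_n,\phi_n,\o\phi}$. The equi-Lipschitz bounds from Proposition~\ref{pdecharacterizepayoff} together with Hypothesis~\ref{boundassump} upgrade this to uniform convergence on the state space, and the bound is uniform in $\tilde\phi^0$ because the constants produced depend only on $L$, $C$, and $L_{\phi^0}$. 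Passing to the infimum in $V^0_\phi=\inf_{\tilde\phi^0} J^0_{\tilde\phi^0,\phi}$ (and the analogous representation for $V_{\phi^0,\phi}$) then preserves uniform convergence, giving the sought continuity of $\Psi$ on $\mathcal{S}$.

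\vskip 3pt\noindent\textbf{Conclusion and main obstacle.} Schauder's theorem applied to the continuous self-map $\Psi$ on the compact convex set $\mathcal{S}$ produces a fixed point $(\hat\phi^0,\hat\phi)\in\mathcal{S}$, which by construction satisfies $[\hat\phi^0,\hat\phi]=[\bphi^{0,*}(\hat\phi),\bphi^*(\hat\phi^0,\hat\phi)]$ and is automatically Lipschitz. The principal obstacle is the continuity step, and more specifically the need to uniformize in $\tilde\phi^0$ the stability of the cost functional $J^0_{\tilde\phi^0,\phi}$ under perturbations of $\phi$ before taking the infimum; without that uniformity, the infimum operation could destroy continuity. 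The semigroup convergence machinery developed in Section~\ref{se:convergence} together with the Lipschitz regularity of the data is exactly what is needed to handle this point.
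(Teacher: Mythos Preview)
Your overall architecture---Schauder on a compact convex set of equi-Lipschitz feedback pairs, with the self-mapping forced by Hypothesis~\ref{assumpopt4} and the selector Lipschitz constants---is exactly the paper's. The divergence is in the continuity step, and there your argument has a real gap.

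You propose to prove $V^0_{\phi_n}\to V^0_\phi$ by first establishing $J^0_{\tilde\phi^0,\phi_n}\to J^0_{\tilde\phi^0,\phi}$ for each fixed auxiliary $\tilde\phi^0$ via semigroup convergence, and then passing the infimum over $\tilde\phi^0$. For this last step to work you need the convergence to be \emph{uniform in $\tilde\phi^0$} over the whole Lipschitz class, and you have not established that. Equi-Lipschitz regularity of $J^0_{\tilde\phi^0,\phi_n}$ in $(t,x)$ (which, incidentally, is not what Proposition~\ref{pdecharacterizepayoff} provides---that result only gives continuity and a comparison principle; the quantitative Lipschitz bounds live in Propositions~\ref{majorNproperty}--\ref{majorvalueodeproperty} for the finite-$N$ objects) upgrades pointwise to uniform convergence in $(t,x)$ for each fixed $\tilde\phi^0$, but says nothing about uniformity across $\tilde\phi^0$. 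What you would actually need is a stability estimate of the form $\|J^0_{\tilde\phi^0,\phi_n}-J^0_{\tilde\phi^0,\phi}\|_\infty\le C\|\phi_n-\phi\|_\infty$ with $C$ independent of $\tilde\phi^0$; this is plausible via a Gronwall argument on the flow of the first-order PDE~\eqref{majorJpde}, but it is additional work that neither Lemma~\ref{convgeneratorsemigroup} nor Proposition~\ref{pdecharacterizepayoff} delivers. You flag this as the ``principal obstacle'' but do not actually discharge it.

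The paper avoids the issue entirely by proving continuity of the value functions directly, without ever representing them as infima of cost functionals. Its Lemma~\ref{lemexistencenash} uses the Barles--Perthame half-relaxed limit method: one shows that $\limsup^* V^0_{\phi_n}$ and $\liminf_* V^0_{\phi_n}$ are respectively a viscosity sub- and supersolution of the HJB system~\eqref{hjbmajor} associated with the limiting $\phi$, and then the comparison principle (Theorem~\ref{compmajor}) squeezes them both to $V^0_\phi$. This route needs only pointwise convergence of the Hamiltonians (immediate from $\phi_n\to\phi$ and Hypothesis~\ref{lipassump}) plus comparison, and never confronts the uniformity-in-control problem.
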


\begin{proof}
We apply Schauder's fixed point theorem. To this end, we need to: (i) specify a Banach space $\mathbb{V}$ containing the admissible feedback functions $(\phi^0, \phi)$ as elements, and a relatively compact convex subset $\mathbb{K}$ of $\mathbb{V}$; (ii) show that the mapping 
$\mathcal{R}: [\phi^0,\phi] \rightarrow [\bphi^{0*}(\phi), \bphi^*(\phi^0, \phi)]$ is continuous and leaves $K$ invariant (i.e. $\mathcal{R}(K) \subset K$). 

\vspace{3mm}
\noindent (i) Define $\mathbb{C}^0$ as the collection of $A_0$ - valued functions $\phi^0$ on $[0,T]\times\{1,\dots,M^0\}\times\mathcal{P}$ such that $(t,x)\times \phi^0(t,i^0,x)$ is continuous for all $i^0$, $\mathbb{C}$ as the collection of $A$ - valued functions $\phi$ on $[0,T]\times\{1,\dots,M\}\times\{1,\dots,M^0\}\times\mathcal{P}$ such that $(t,x)\times \phi(t,i,i^0,x)$ is continuous for all $i,i^0$, and set $\mathbb{V} := \mathbb{C}^0 \times \mathbb{C}$. For all $(\phi^0,\phi)\in\mathbb{V}$, we define the norm:
\[
\|(\phi^0,\phi)\| := \max\left\{\sup_{i^0,t\in[0,T],x\in\mathcal{P}} |\phi^0(t,i^0,x)|, \sup_{i, i^0,t\in[0,T],x\in\mathcal{P}} |\phi(t,i,i^0,x)|\right\}.
\]
It is easy to check that $(\mathbb{V}, \|\cdot\|)$ is a Banach space. Next, we define $\mathbb{K}$ as the collection of elements in $\mathbb{V}$ such that the mappings $(t,x)\rightarrow \phi^0(t,i^0,x)$ are $L^0-$Lipschitz and $(t,x)\rightarrow \phi(t,i,i^0,x)$ are $L-$Lipschitz in $(t,x)$ for all $i^0=1,\dots,M^0$ and $i=1,\dots,M$, where $L^0, L$ are specified in Hypothesis \ref{assumpopt4}. Clearly $\mathbb{K}$ is convex. Now consider the family $(\phi^0(\cdot, i^0, \cdot))_{(\phi^0,\phi)\in \mathbb{K}}$ of functions defined on $[0,T]\times\mathcal{P}$. Thanks to the Lipschitz property, we see immediately that the family is equicontinuous and pointwise bounded. Therefore by Arzel\`a-Ascoli theorem, the family is compact with respect to the uniform norm. Repeating this argument for all $i,i^0$ we see that $\mathbb{K}$ is compact under the norm $\|\cdot\|$. Moreover, thanks to Hypothesis \ref{assumpopt2} - \ref{assumpopt4}, we obtain easily that $\mathbb{K}$ is stable by $\mathcal{R}$.

\vspace{3mm}
\noindent(ii) It remains to show that $\mathcal{R}$ is a continuous mapping.  We use the following lemma:

\begin{lemma}\label{lemexistencenash}
Let $(\phi^0_n, \phi_n)$ be a sequence in $\mathbb{K}$ converging to $(\phi^0, \phi)$ in $\|\cdot\|$, and denote by $V^{0}_n$ and $V_n$ the value functions of the major and representative minor players associated with $(\phi^0_n, \phi_n)$. Then $V^{0}_n$ and $V_n$ converge uniformly to $V^0$ and $V$ respectively where $V^0$ and $V$ are the value functions of the major player and the representative minor player associated with $(\phi^0, \phi)$.
\end{lemma}

The proof of the lemma uses standard arguments from the theory of viscosity solutions. We give it in the Appendix. Now the continuity of the mapping $\mathcal{R}$ follows readily from Lemma \ref{lemexistencenash}, Proposition \ref{optimalresponseprop} and Hypothesis \ref{assumpopt2} \& \ref{assumpopt3}. This completes the proof.
\end{proof}

\section{The Master Equation and the Verification Argument for Nash Equilibria}
\label{se:master}
If a Nash equilibrium exists and is given by feedback functions $\hat\phi^0$ for the major player  and $\hat\phi$ for the minor players,
these functions should also be equal to the respective minimizers of the Hamiltonians in the HJB equations of the optimization problems. This informal remark leads to a system of coupled PDEs with terminal conditions specified at $t=T$, which we expect to hold if the equilibrium exists. Now the natural question to ask is: if this system of PDEs has a solution, does this solution provide a Nash equilibrium? The following result provides a verification argument:

\begin{theorem}\label{verificationtheo}
(Verification Argument) Assume that there exists two function $\hat\phi^0: [0,T] \times \{1,\dots, M^0\} \times \mathcal{P}\ni(t, i^0, x) \rightarrow \hat\phi^0(t, i^0, x) \in\mathbb{R}$ and $\hat\phi: [0,T] \times \{1,\dots, M\} \times \{1,\dots, M^0\} \times \mathcal{P}\ni(t, i, i^0, x) \rightarrow \hat\phi(t, i, i^0, x)\in\mathbb{R}$ such that the system of PDEs in $(v^0, v)$:
\begin{equation}
\label{hjbmaster}
\begin{aligned}
&0=[\mathcal{G}^0_{\hat\phi^0,\hat\phi} v^0](t, i^0, x) + f^0(t, i^0, \hat\phi^0(t,i^0, x), x)\\
&\hskip 75pt
v^0(T,i^0, x) = g^0(i^0, x),\qquad (i^0,x)\in\{1,\dots, M^0\}\times \mathcal{P}\\
&0 = [\mathcal{G}_{\hat\phi^0,\hat\phi,\hat\phi} v](t, i,i^0, x) + f(t,i, \hat\phi(t, i,i^0, x), i^0, \hat\phi^0(t,i^0, x),  x)\\
&\hskip 75pt
v(T,i,i^0, x) = g(i,i^0, x),\qquad (i,i_0,x)\in  \{1,\dots, M\}\times\{1,\dots, M^0\}\times\mathcal{P}
\end{aligned}
\end{equation}
admits a classical solution $(\hat V^{0}, \hat V)$ (i.e. the solution are $\mathcal{C}^1$ in $t$ and $x$). Assume in addition that:
\begin{equation}
\label{minimizers}
\begin{aligned} 
&\hat\phi^0(t,i^0,x) = \hat\alpha^0(t, i^0, x, \hat V^0(t,\cdot,x))\\
&\hat\phi(t,i,i^0,x) = \hat\alpha(t,i, i^0, x, \hat V(t,\cdot,\cdot,x))
\end{aligned}
\end{equation}
Then $\hat\phi^0$ and $\hat\phi$ form a Nash equilibrium and $\hat V^{0}(0,X_0^0, \mu_0)$ and $\hat V(0, X_0, X_0^0, \mu_0)$ are the equilibrium expected costs of the major and minor players.
\end{theorem}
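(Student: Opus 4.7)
\begin{proof-sketch}
The plan is a verification argument run symmetrically on the major player and on the representative minor player. For each of them, the observation is that the master PDE \eqref{hjbmaster} supplies \emph{two} characterizations of the same classical solution: first, because $\hat\phi^0$ (resp.\ $\hat\phi$) realizes the pointwise minimum in the corresponding HJB Hamiltonian thanks to \eqref{minimizers}, the master PDE coincides with the HJB equation of the player's optimization problem; second, evaluated along the candidate equilibrium feedbacks, the same master PDE is the linear payoff PDE of Proposition \ref{pdecharacterizepayoff}. Uniqueness via the comparison principles of Theorem \ref{compmajor}, Theorem \ref{hjbtheominor} and Proposition \ref{pdecharacterizepayoff} then collapses value function and expected cost, forcing optimality.

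Before invoking any comparison principle I would first check that $\hat\phi^0$ and $\hat\phi$ are Lipschitz feedback functions, since Lipschitz regularity is the standing requirement of Theorems \ref{compmajor} and \ref{hjbtheominor} as well as of Proposition \ref{pdecharacterizepayoff}. Because $\hat V^0$ and $\hat V$ are classical solutions, they are $\mathcal{C}^1$ on the compact set $E^\infty$ and therefore Lipschitz in $(t,x)$. Combined with Hypotheses \ref{assumpopt2}--\ref{assumpopt3}, which give Lipschitz continuity of the minimizers $\hat\alpha^0$ and $\hat\alpha$ in all of their arguments, the representation \eqref{minimizers} transfers Lipschitz regularity to $\hat\phi^0$ and $\hat\phi$.

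For the major player, fix the minor feedback at $\hat\phi$. By \eqref{minimizers}, $\hat\phi^0(t,i^0,x)$ achieves the infimum in the Hamiltonian of \eqref{hjbmajor} with $\phi=\hat\phi$, so the first line of \eqref{hjbmaster} is precisely \eqref{hjbmajor}. A classical solution is \emph{a fortiori} a viscosity solution, and Theorem \ref{compmajor} then yields $\hat V^0 = V^0_{\hat\phi}$. On the other hand, the very same first line of \eqref{hjbmaster} is the payoff PDE \eqref{majorJpde} with $(\phi^0,\phi) = (\hat\phi^0,\hat\phi)$; Proposition \ref{pdecharacterizepayoff} and its comparison principle give $\hat V^0 = J^0_{\hat\phi^0,\hat\phi}$. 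Chaining the two identifications one obtains
\[
J^0_{\hat\phi^0,\hat\phi} \;=\; V^0_{\hat\phi} \;=\; \inf_{\balpha^0\leftrightarrow\phi^0} J^0_{\phi^0,\hat\phi},
\]
which is exactly the major player's equilibrium condition.

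The representative minor player is handled symmetrically. With the major player fixed at $\hat\phi^0$ and the field of other minors at $\hat\phi$, the second half of \eqref{minimizers} turns the second equation of \eqref{hjbmaster} into the HJB equation \eqref{hjbminor} satisfied by $V_{\hat\phi^0,\hat\phi}$; evaluated at that minimizer, the same equation is also the payoff PDE \eqref{minorJpde} for $J_{\hat\phi^0,\hat\phi,\hat\phi}$. The comparison principles of Theorem \ref{hjbtheominor} and Proposition \ref{pdecharacterizepayoff}, applicable because all three feedbacks are Lipschitz, give $\hat V = V_{\hat\phi^0,\hat\phi} = J_{\hat\phi^0,\hat\phi,\hat\phi}$, establishing $\hat\phi$ as a best response of the representative minor player. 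Specializing to $t=0$ then identifies $\hat V^{0}(0,X_0^0,\mu_0)$ and $\hat V(0,X_0,X_0^0,\mu_0)$ with the equilibrium expected costs. The main obstacle, and the reason we insist on a \emph{classical} rather than a merely viscosity solution of \eqref{hjbmaster}, is precisely the Lipschitz verification: without it the comparison principles are unavailable and the two characterizations of $\hat V^0$ and $\hat V$ cannot be chained.
\end{proof-sketch}
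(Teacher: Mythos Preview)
Your proof is correct and follows essentially the same route as the paper: you first extract Lipschitz regularity of $\hat\phi^0,\hat\phi$ from the $\mathcal C^1$ regularity of $(\hat V^0,\hat V)$ together with Hypotheses \ref{assumpopt2}--\ref{assumpopt3}, then use \eqref{minimizers} to identify the master PDE simultaneously with the HJB equation and with the linear payoff PDE, and conclude via the comparison principles of Theorem \ref{compmajor}, Theorem \ref{hjbtheominor} and Proposition \ref{pdecharacterizepayoff}. This matches the paper's argument step for step; your added remark explaining why the classical (rather than viscosity) solvability of \eqref{hjbmaster} is needed is a nice clarification but does not change the strategy.
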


\begin{proof}
We show that $\hat\phi^0 = \bphi^{0*}(\hat \phi)$ and $\hat\phi = \bphi^*(\hat \phi^0, \hat \phi)$. Notice first that $\hat\phi^0$ and $\hat\phi$ are Lipschitz strategies due to the regularity of $\hat V^0, \hat V$ and Hypothesis \ref{assumpopt2}-\ref{assumpopt3}.

Consider the major player optimization problem where we let $\phi = \hat \phi$ and denote by $V^0_{\hat\phi}$ the corresponding value function. Then since $\hat V^0$ is a classical solution to (\ref{hjbmaster}) and because of (\ref{minimizers}), we deduce that $\hat V^0$ is a viscosity solution to the HJB equation (\ref{hjbmajor}) associated with the value function $V^0_{\hat\phi}$. By uniqueness of the viscosity solution, we conclude that $\hat V^0 = V^0_{\hat\phi}$.

On the other hand, if we denote by $J^0_{\hat\phi^0,\hat\phi}$ the expected cost function of the major player when it uses the feedback function $\hat\phi^0$ and all the minor players use strategy $\hat\phi$, then the fact that $\hat V^0$ is a classical solution to (\ref{hjbmaster}) implies that $\hat V^0$ is also a viscosity solution. Then by Proposition \ref{pdecharacterizepayoff} we have $J^0_{\hat\phi^0,\hat\phi} = \hat V^0$ and therefore $J^0_{\hat\phi^0,\hat\phi} = V^0_{\hat\phi} = \inf_{\balpha^0\leftrightarrow\phi^0} J^0_{\phi^0,\hat\phi}$. This means that $\hat \phi^0$ is the best response of the major player to the minor players using feedback function $\hat\phi$.

\vskip 2pt
For the optimization problem of the representative minor player, we use the same argument based on the uniqueness of solution of PDE to obtain  $J_{\hat\phi^0,\hat\phi,\hat\phi}= \hat V = V_{\hat\phi^0,\hat\phi} = \inf_{\bar\balpha\leftrightarrow\bar\phi} J_{\hat\phi^0,\hat\phi,\bar\phi}.$
This implies that $\hat\phi$ is the representative player's best response to the major player using feedback function $\hat\phi^0$ and the rest of the minor players using $\hat\phi$. We conclude that $\hat\phi^0$ and $\hat\phi$ form the desired fixed point for the best response map.
\end{proof}

It is important to keep in mind that the above verification argument of the Master equation does not speak to the problem of existence of Nash equilibria. However, it provides a convenient way to compute numerically the equilibrium via the solution of a coupled system of first-order PDEs.

\section{Propagation of Chaos and Approximate Nash Equilibria}
\label{se:chaos}

In this section we show that in the $(N+1)$-player game (see description in Section \ref{se:finite}), when the major player and each minor player apply the respective equilibrium strategy in the mean field game, the system is in an approximate Nash equilibrium. To uncover this link, we first revisit the $(N+1)$-player game. We show that for a certain strategy profile, the expected cost of individual player in the finite player game converges to that of the mean field game. Our argument is largely similar to the one used in proving the convergence of numerical scheme for viscosity solutions. One crucial intermediate result we use here is the gradient estimate for the value functions of the $(N+1)$-player game. Similar results were proved in \cite{GomesMohrSouza_continuous} for discrete state mean field game without major player. As a biproduct of the proof, we can also conclude that the value function of the mean field game is Lipschitz in the measure argument. In the rest of the section, we assume that Hypothesis \ref{assumpopt1} is in force.

\subsection{Back to the $(N+1)$-Player Game}
In this section, we focus on the game with a major player and $N$ minor players. We show that both the expected costs of individual players and the value functions of the players' optimization problems can be characterized by coupled systems of ODEs, and their gradients are bounded by some constant independent of $N$. Such a gradient estimate will be crucial in establishing results on propagation of chaos, as well as the regularity of the value functions for the limiting mean field game.

We start from the major player's optimization problem. Consider a strategy profile where the major player chooses a Lipschitz feedback function $\phi^0$ and all the $N$ minor players choose the same Lipschitz feedback function $\phi$. Recall that the process comprising the major player's state and the empirical distirbution of the states of the minor players, say $(X_t^{0,N}, \mu_t^N)$, is a finite-state Markov process in the space $\{1,\dots, M^0\} \times \mathcal{P}^N$, where $\mathcal{P}^N := \{\frac{1}{N}(k_1, \dots, k_{M-1}) | \sum_{i} k_i \le N, k_i \in \mathbb{N}\}$. Its infinitesimal generator $\mathcal{G}_{\phi^0,\phi}^{0, N}$ was given by (\ref{generatorNmajor}).  The expected cost to the  major player is given by:
\[
J^{0,N}_{\phi^0,\phi}(t,i^0,x) := \mathbb{E}\left[\int_{t}^{T} f^0_{\phi^0}(s, X_s^{0,N}, \mu_s^N) ds + g^0(X_T^{0,N}, \mu_T^N) | \; X_t^{0,N} = i^0, \mu_t^N = x\right]
\]
and the value function of the major player's optimization problem by:
\[
V^{0,N}_{\phi}(t,i^0,x) := \inf_{\balpha^0\leftrightarrow\phi^0 \in\mathbb{A}^0}J^{0,N}_{\phi^0,\phi}(t,i^0,x).
\]
Despite the notation, $J^{0,N}_{\phi^0,\phi}$ can be viewed as a function defined on $[0,T]$ with values given by vectors indexed by $(i^0, x)$. The following result shows that $J^{0,N}_{\phi^0,\phi}$ is characterized by a coupled system of ODEs.
\begin{proposition}\label{odemajorpayoffprop}
Let $\phi^0\in\mathbb{L}^0$ and $\phi\in\mathbb{L}$, then $J^{0,N}_{\phi^0,\phi}$ is the unique classical solution of the system of ODEs:
\begin{equation}\label{odemajorpayoff}
\begin{aligned}
0 =&\; \dot\theta(t,i^0,x) + f^0_{\phi^0}(t, i^0, x) + \sum_{j^0, j^0 \neq i^0} (\theta(t, j^0, x) - \theta(t, i^0, x))   q^0_{\phi^0}(t, i^0, j^0, x)\\
&\;+ \sum_{(i,j), j\neq i}(\theta(t, i^0, x + \frac{1}{N}e_{ij} ) -\theta(t, i^0, x )) N x_i  q_{\phi^0, \phi}(t, i, j, i^0,  x)\\
0 =&\; \theta(t,i^0,x) - g^0(i^0, x)
\end{aligned}
\end{equation}
\end{proposition}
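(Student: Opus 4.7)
The proposition states that the finite-player cost $J^{0,N}_{\phi^0,\phi}$ is characterized by a Feynman--Kac type equation for the Markov chain $(X_t^{0,N},\mu_t^N)$ on the finite state space $\{1,\dots,M^0\}\times \cP^N$. Indeed, the two summations appearing in (\ref{odemajorpayoff}) are exactly the spatial part of the generator $\mathcal{G}^{0,N}_{\phi^0,\phi}$ from (\ref{generatorNmajor}) applied to $\theta$, so (\ref{odemajorpayoff}) can be rewritten compactly as $[\mathcal{G}^{0,N}_{\phi^0,\phi}\theta](t,i^0,x)+f^0_{\phi^0}(t,i^0,x)=0$ together with the terminal condition $\theta(T,i^0,x)=g^0(i^0,x)$. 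My plan is therefore to first produce a classical solution by standard ODE theory, and then identify it with $J^{0,N}_{\phi^0,\phi}$ by a Dynkin's formula argument.

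For existence and uniqueness of a classical solution, I would exploit the finiteness of the state space $\{1,\dots,M^0\}\times\cP^N$: regarding $\theta(t,\cdot,\cdot)$ as a vector in $\mathbb{R}^{d}$ with $d = M^0\cdot |\cP^N|$, the system (\ref{odemajorpayoff}) becomes a finite-dimensional linear backward ODE of the form $\dot\theta(t)=A(t)\theta(t)+b(t)$ with $\theta(T)$ prescribed. Under Hypothesis \ref{lipassump}, Hypothesis \ref{boundassump} and the Lipschitz regularity of $\phi^0,\phi$, both the coefficient matrix $A(t)$ and the source $b(t)$ are bounded and continuous on $[0,T]$, so Picard--Lindel\"of yields a unique $C^1$ solution $\theta$ on $[0,T]$.

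To identify $\theta$ with $J^{0,N}_{\phi^0,\phi}$, I would apply Dynkin's formula to the space-time process $(s,X_s^{0,N},\mu_s^N)_{s\in[t,T]}$, whose infinitesimal generator is exactly $\mathcal{G}^{0,N}_{\phi^0,\phi}$. Since $\theta$ solves $[\mathcal{G}^{0,N}_{\phi^0,\phi}\theta]+f^0_{\phi^0}=0$, the compensated process
\[
M_s \;:=\; \theta(s,X_s^{0,N},\mu_s^N) \;+\; \int_t^s f^0_{\phi^0}(u, X_u^{0,N}, \mu_u^N)\,du, \qquad s\in[t,T],
\]
is a martingale started at $\theta(t,X_t^{0,N},\mu_t^N)$. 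Evaluating at $s=T$, using the terminal condition $\theta(T,\cdot,\cdot)=g^0$, and conditioning on $(X_t^{0,N},\mu_t^N)=(i^0,x)$ yields precisely $\theta(t,i^0,x)=J^{0,N}_{\phi^0,\phi}(t,i^0,x)$. Uniqueness as a classical solution was already established in the previous step.

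I do not anticipate a serious obstacle. The only care needed is in invoking Dynkin's formula for a time-inhomogeneous chain, but this is a standard consequence of the martingale characterization applied to the space-time process whose generator is $\mathcal{G}^{0,N}_{\phi^0,\phi}$, exactly as used implicitly in Section \ref{se:convergence}. The finiteness of the state space $\{1,\dots,M^0\}\times\cP^N$ bypasses entirely the viscosity / approximation subtleties encountered in Section \ref{se:optimizations}, so the argument reduces to classical finite-dimensional ODE theory combined with one application of the optional stopping / martingale identity.
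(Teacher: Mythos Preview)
Your proposal is correct and follows essentially the same approach as the paper: existence and uniqueness of the classical solution via the Cauchy--Lipschitz (Picard--Lindel\"of) theorem applied to the finite-dimensional linear ODE system, and identification of the solution with $J^{0,N}_{\phi^0,\phi}$ via Dynkin's formula. Your write-up is simply a more detailed version of the two-line argument the paper gives.
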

\begin{proof}
The existence and uniqueness of the solution to (\ref{odemajorpayoff}) is an easy consequence of the Lipschitz property of the functions $f^0, q, q^0, \phi^0,\phi$ and Cauchy-Lipschitz Theorem. The fact that $J^{0,N}_{\phi^0,\phi}$ is a solution to (\ref{odemajorpayoff}) follows from Dynkin formula. \end{proof}

We state without proof the similar result for $V^{0,N}_{\beta}$.

\begin{proposition}\label{odemajorvalueprop}
If Hypotheses \ref{assumpopt1} - \ref{assumpopt3} hold and $\phi$ is a  Lipschitz strategy, then $V^{0,N}_{\phi}$ is the unique classical solution of the system
of ODEs:
\begin{equation}\label{odemajorvalue}
\begin{aligned}
0 =&\; \dot\theta(t,i^0,x) + \inf_{\alpha^0 \in A^0} \{ f^0(t, i^0,\alpha^0, x) + \sum_{j^0, j^0 \neq i^0} (\theta(t, j^0, x) - \theta(t, i^0, x))   q^0(t, i^0, j^0, \alpha^0, x)\}\\
&+ \sum_{(i,j), j\neq i}(\theta(t, i^0, x + \frac{1}{N}e_{ij} ) -\theta(t, i^0, x )) N x_i  q(t, i, j, \phi(t, i, i^0, x), i^0, x)\\
0 =&\; \theta(t,i^0,x) - g^0(i^0, x)
\end{aligned}
\end{equation}
\end{proposition}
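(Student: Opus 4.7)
The plan is to split the proof into two parts: first establishing existence and uniqueness of a classical solution $\theta$ to the ODE system \eqref{odemajorvalue} by a Cauchy--Lipschitz argument, and then identifying $\theta$ with $V^{0,N}_{\phi}$ via a verification argument based on Dynkin's formula. Crucially, because $\mu_t^N$ takes values in the \emph{finite} set $\mathcal{P}^N$, the unknown $\theta(t,i^0,x)$ is, for each fixed $(i^0,x)\in\{1,\dots,M^0\}\times\mathcal{P}^N$, a scalar function of $t$, so the system \eqref{odemajorvalue} is a genuine finite-dimensional ODE system and both parts reduce to standard arguments.

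For existence/uniqueness of $\theta$, I would use Hypothesis \ref{assumpopt2} to replace the infimum over $\alpha^0\in A_0$ by its unique Lipschitz minimizer. More precisely, writing $\hat\alpha^0(t,i^0,x,\theta(t,\cdot,x))$ for the minimizer with value argument given by the $M^0$-vector $(\theta(t,j^0,x))_{j^0}$, the first line of \eqref{odemajorvalue} rewrites as
\begin{equation*}
\dot\theta(t,i^0,x) = -H\bigl(t,i^0,x,\theta(t,\cdot,\cdot)\bigr),
\end{equation*}
where $H$ depends on $\theta$ only through finitely many coordinates and is Lipschitz in those coordinates thanks to Hypotheses \ref{lipassump}, \ref{assumpopt2} and the Lipschitz property of $\phi$. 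Cauchy--Lipschitz on the finite-dimensional state space $\{1,\dots,M^0\}\times\mathcal{P}^N$ then yields a unique $C^1$ solution on $[0,T]$ satisfying the terminal condition $\theta(T,i^0,x)=g^0(i^0,x)$.

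For the identification, let $\theta$ denote this unique solution and set $\hat\phi^0(t,i^0,x):=\hat\alpha^0(t,i^0,x,\theta(t,\cdot,x))$, which is Lipschitz in $(t,x)$ since $\theta$ is $C^1$ and $\hat\alpha^0$ is Lipschitz. Applying Dynkin's formula to $\theta(\,\cdot\,,X_\cdot^{0,N},\mu_\cdot^N)$ under the controlled dynamics with generator $\mathcal{G}^{0,N}_{\hat\phi^0,\phi}$, the defining ODE (with the infimum attained at $\hat\alpha^0$) gives
\begin{equation*}
\theta(t,i^0,x)=\mathbb{E}\Bigl[\int_t^T f^0_{\hat\phi^0}(s,X_s^{0,N},\mu_s^N)\,ds+g^0(X_T^{0,N},\mu_T^N)\,\Big|\,X_t^{0,N}=i^0,\mu_t^N=x\Bigr]=J^{0,N}_{\hat\phi^0,\phi}(t,i^0,x)\ge V^{0,N}_{\phi}(t,i^0,x).
\end{equation*}
For the reverse inequality, for any admissible feedback $\phi^{\prime 0}$, the infimum in \eqref{odemajorvalue} implies the pointwise submartingale-type inequality $\mathcal{G}^{0,N}_{\phi^{\prime 0},\phi}\theta(t,i^0,x)+f^0_{\phi^{\prime 0}}(t,i^0,x)\ge 0$; Dynkin's formula under $\mathcal{G}^{0,N}_{\phi^{\prime 0},\phi}$ and the terminal condition then yield $\theta(t,i^0,x)\le J^{0,N}_{\phi^{\prime 0},\phi}(t,i^0,x)$, and taking the infimum over $\phi^{\prime 0}\in\mathbb{A}^0$ gives $\theta\le V^{0,N}_{\phi}$. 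The main (mild) obstacle is purely bookkeeping: one must check that $\hat\phi^0$ built from $\theta$ is admissible and that the finite-player semigroup construction of Section \ref{se:convergence} applies under the generic admissible $\phi^{\prime 0}$ so that Dynkin's formula is valid; both are routine given that the state space is finite and $q^0$, $q$ are bounded by Hypothesis \ref{boundassump} combined with the Lipschitz control of $\hat\alpha^0$.
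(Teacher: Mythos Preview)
Your proposal is correct and follows essentially the approach the paper has in mind: Proposition \ref{odemajorvalueprop} is stated without proof as the direct analogue of Proposition \ref{odemajorpayoffprop}, whose proof consists of Cauchy--Lipschitz for existence/uniqueness plus Dynkin's formula for the identification. Your two-sided verification argument via Dynkin is exactly the adaptation needed when passing from the expected cost $J^{0,N}_{\phi^0,\phi}$ to the value function $V^{0,N}_{\phi}$.

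One small simplification: for the Cauchy--Lipschitz step you do not actually need to invoke the Lipschitz minimizer $\hat\alpha^0$ from Hypothesis \ref{assumpopt2}. The Hamiltonian $h^0(t,i^0,x,v):=\inf_{\alpha^0}\{f^0+\sum_{j^0\neq i^0}(v_{j^0}-v_{i^0})q^0\}$ is automatically Lipschitz in $v$ because $q^0$ is bounded (an infimum of uniformly Lipschitz functions is Lipschitz with the same constant); this is precisely the estimate \eqref{esth} used later in the proof of Proposition \ref{majorvalueodeproperty}. Hypothesis \ref{assumpopt2} is, however, genuinely needed where you use it in the identification step, to build the candidate optimal feedback $\hat\phi^0$ and obtain $\theta=J^{0,N}_{\hat\phi^0,\phi}\ge V^{0,N}_{\phi}$.
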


The following estimates for $J^{0,N}_{\phi^0,\phi}$ and $V^{0,N}_{\phi}$ will play a crucial role in proving convergence to the solution of the mean field game. Their proofs are postponed to the appendix.

\begin{proposition}\label{majorNproperty}
For all Lipschitz strategies $\phi^0,\phi$, there exists a constant $L$ only depending on $T$ and the Lipschitz constants and bounds of $\phi^0,\phi, q^0,q,f^0, g^0$ such that for all $N>0$, $(t,i^0, x)\in [0,T]\times \{1,\dots, M^0\}\times\mathcal{P}^N$ and $j,k\in\{1,\cdots,M\}, j\neq k$, we have:
\[
|J^{0,N}_{\phi^0,\phi}(t,i^0,x)|\le \|g^0\|_{\infty} + T \|f^0\|_{\infty},
\quad\text{and}\quad
J^{0,N}_{\phi^0,\phi}(t,i^0,x+\frac{1}{N}e_{jk}) - J^{0,N}_{\phi^0,\phi}(t,i^0,x)|\le \frac{L}{N}.
\]
\end{proposition}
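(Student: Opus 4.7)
The first bound is immediate: since Hypothesis \ref{lipassump} and the compactness of $\cP$ imply that $f^0$ and $g^0$ are uniformly bounded, passing absolute values inside the expectation defining $J^{0,N}_{\phi^0,\phi}$ yields $|J^{0,N}_{\phi^0,\phi}(t,i^0,x)| \le (T-t)\|f^0\|_\infty + \|g^0\|_\infty$ directly.

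For the Lipschitz-in-measure estimate, my plan is to use a synchronous coupling of two copies of the labeled $(N+1)$-player Markov chain. Since all minor players use the common feedback function $\phi$, the cost $J^{0,N}_{\phi^0,\phi}$ depends on the minor players only through their empirical distribution, so it suffices to compare a system started from a labeled configuration $(i^0, j, \xi^2, \dots, \xi^N)$ with one started from $(i^0, k, \xi^2, \dots, \xi^N)$, the $\xi^n$ being chosen so that the two empirical distributions are $x$ and $x + \frac{1}{N}e_{jk}$. Drive both systems by a common family of independent Poisson processes running at a uniform upper rate $Q \ge \|q\|_\infty \vee \|q^0\|_\infty$, and at each atom time use a single shared uniform random variable $U_\tau$ to decide, by thinning, whether the potential jump is realized in each system. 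As long as the two systems agree on the major player's state and on every minor player except player~$1$, their instantaneous rates differ by at most a Lipschitz constant times $\|\mu^N_s - \tilde\mu^N_s\|_1$, which is $O(1/N)$ in that regime.

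Let $D_s := \#\{n \in \{1,\dots,N\} : Y^n_s \ne \tilde Y^n_s\}$ and let $\tau^\ast$ be the first time at which the two major-player trajectories separate. On $\{s < \tau^\ast\}$ one has $\|\mu^N_s - \tilde\mu^N_s\|_1 \le 2 D_s/N$, and a careful accounting of the atoms (using Hypothesis \ref{lipassump} and the Lipschitz property of $\phi^0, \phi$) bounds the instantaneous rate at which $D_s$ increases by $2 C D_s$: the factor $N$ coming from the number of coupled minor players cancels the $1/N$ from the rate difference. Since $D_t = 1$, Gronwall's inequality gives $\EE[D_s \mathbbm{1}_{s < \tau^\ast}] \le e^{2C(s-t)} \le e^{2CT}$ for all $s \in [t,T]$. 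Similarly, on $\{s < \tau^\ast\}$, the major player's decoupling rate is bounded by $C' D_s / N$, so $\PP[\tau^\ast \le T] \le C' T e^{2CT}/N$.

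To conclude, I would write the cost difference as $\Delta J = \EE[\int_t^T (f^0_{\phi^0}(s, \tilde X^{0}_s, \tilde\mu^N_s) - f^0_{\phi^0}(s, X^{0}_s, \mu^N_s))\,ds + g^0(\tilde X^{0}_T, \tilde\mu^N_T) - g^0(X^{0}_T, \mu^N_T)]$ and split over $\{\tau^\ast > T\}$ and its complement. On $\{\tau^\ast > T\}$ the major-player states agree, so the Lipschitz properties of $f^0, g^0$ in $(\alpha^0, x)$ combined with that of $\phi^0$ bound the integrand and the terminal difference by a constant times $D_s/N$, yielding an $O(1/N)$ contribution after integrating and taking expectations. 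On the complement the integrand and the terminal term are bounded uniformly by $2 T \|f^0\|_\infty + 2\|g^0\|_\infty$, contributing $O(1/N)$ via the probability bound on $\tau^\ast$. The main obstacle I anticipate is verifying the linear-in-$D_s$ bound on the rate of new discrepancies, which requires a case analysis of each pair $(Y^n, \tilde Y^n)$ depending on whether $n$ is currently coupled or decoupled, and checking that the resulting constant $L$ depends only on $T$, the Lipschitz constants and bounds appearing in the data and in the feedback functions, and not on $N$.
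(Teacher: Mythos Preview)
Your coupling argument is correct in outline and would yield the stated $L/N$ bound, but it is a genuinely different route from the paper's proof. The paper works entirely analytically: by Proposition~\ref{odemajorpayoffprop}, $\theta := J^{0,N}_{\phi^0,\phi}$ solves the linear ODE system~\eqref{odemajorpayoff}, which has the abstract form $-\dot\theta = f(t) + M(t)\theta$ with $M(t)$ a (transposed) Q-matrix. A comparison lemma (Lemma~\ref{odebound}, borrowed from \cite{GomesMohrSouza_continuous}) gives the uniform bound $\|\theta(t)\|\le \|b\| + \int_t^T\|f(s)\|\,ds$, which is the first inequality. For the second, the paper sets $z(t,i^0,x,k,l):=\theta(t,i^0,x+\tfrac1N e_{kl})-\theta(t,i^0,x)$, subtracts the two ODEs, and observes that $z$ again satisfies an equation of the same Q-matrix form, with a forcing term $F$ that the Lipschitz hypotheses bound by $C/N + C'\|z(t)\|$. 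Applying Lemma~\ref{odebound} once more and then Gronwall gives $\|z(t)\|\le L/N$.

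The trade-offs: your probabilistic argument is more intuitive and makes transparent \emph{why} the $N$ from the number of minor players cancels the $1/N$ from the rate discrepancy, but it requires the somewhat delicate case analysis you flag at the end, plus care in handling the stopping time $\tau^\ast$ inside the Gronwall step. The paper's ODE approach is shorter and more mechanical once Lemma~\ref{odebound} is in hand, and---more importantly---it transfers without change to Proposition~\ref{majorvalueodeproperty}, where an infimum over $\alpha^0$ sits inside the equation and there is no obvious coupling (each system would optimize against its own empirical measure). Your method would need a separate argument there.
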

\begin{proposition}\label{majorvalueodeproperty}
For each Lipschitz feedback function $\phi$ with Lipschitz constant $L_\phi$, there exists constants $C_0, C_1, C_2, C_3, C_4 >0$ only depending on $M^0$, $M$, the Lipschitz constants and bounds of $q^0,q,f^0, g^0$, such that for all  $N>0$, $(t,i^0, x)\in [0,T]\times \{1,\dots, M^0\}\times\mathcal{P}^N$ and $j,k\in\{1,\cdots,M\}, j\neq k$, we have:
\[
|V^{0,N}_{\phi}(t,i^0,x+\frac{1}{N}e_{jk}) - V^{0,N}_{\phi}(t,i^0,x)|\le \frac{C_0 + C_1 T + C_2 T^2}{N}\exp[(C_3 + C_4 L_\phi)T].
\]
\end{proposition}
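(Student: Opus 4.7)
The plan is to convert the estimate into a backward-in-time Gronwall argument on the quantity
$$\eta(t) := N \max_{j \neq k}\,\sup_{i^0, x}\,\bigl|V^{0,N}_{\phi}(t, i^0, x + \tfrac{1}{N}e_{jk}) - V^{0,N}_{\phi}(t, i^0, x)\bigr|,$$
with the terminal data $\eta(T)$ bounded by the Lipschitz constant of $g^0$, independently of $N$. The idea is to evaluate the ODE of Proposition \ref{odemajorvalueprop} at $y := x + \tfrac{1}{N}e_{jk}$ and at $x$, subtract, and extract a differential inequality for the difference. To compare the two infima I would invoke the standard envelope argument: choosing measurable selections $\hat\alpha^0_x$ and $\hat\alpha^0_y$ of minimizers at the two points and substituting each into the bracket at the other yields matching one-sided bounds and thereby a two-sided control of the Hamiltonian difference.

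After the envelope step, the Hamiltonian difference splits into a \emph{vertical} piece, linear in the family $\{V^{0,N}_{\phi}(t, j^0, y) - V^{0,N}_{\phi}(t, j^0, x)\}_{j^0}$ with coefficients uniformly bounded by $\|q^0\|_\infty$, and a \emph{horizontal} piece coming from the Lipschitz dependence of $f^0$ and $q^0$ on $x$. The horizontal piece is $O(\tfrac{1}{N})$ and is multiplied by the uniform sup-norm bound $\|V^{0,N}_\phi\|_\infty \le \|g^0\|_\infty + T\|f^0\|_\infty$, established by the argument underlying Proposition \ref{majorNproperty}; this produces the inhomogeneous contribution of size $(C_1 + C_2 T)/N$.

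The main obstacle is the mean-field coupling term
$$\sum_{i \neq i'} \bigl(V^{0,N}_\phi(t, i^0, z + \tfrac{1}{N}e_{ii'}) - V^{0,N}_\phi(t, i^0, z)\bigr)\, N z_i\, q(t, i, i', \phi(t, i, i^0, z), i^0, z)$$
evaluated at $z = y$ versus $z = x$. After the substitution $w = z + \tfrac{1}{N}e_{ii'}$ on the $y$-sum, one recognizes the difference as a discrete second-order difference acting on $V^{0,N}_\phi$ plus boundary corrections; this gives a contribution bounded by $C\,\eta(t)$, provided one controls three coefficient mismatches: (i) the weights $N y_i - N x_i = \pm 1$ for $i\in\{j,k\}$; (ii) the Lipschitz shift of $q$ in its $x$ argument, of size $O(\tfrac{1}{N})$; and (iii) the Lipschitz shift of the feedback $\phi(t, i, i^0, \cdot)$, of size $L_\phi \cdot O(\tfrac{1}{N})$. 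Each mismatch pairs with a $\tfrac{1}{N}$-scale finite difference of $V^{0,N}_\phi$ to contribute an additive term of order $\eta(t)/N$ or $L_\phi\,\eta(t)/N$; the delicate point is to verify that one does \emph{not} lose a factor of $N$ even though the weights $N z_i$ are of that order, which is where the explicit $L_\phi$ dependence in the Gronwall exponent originates.

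Combining the Hamiltonian and coupling estimates yields a reverse-time Gronwall inequality of the form
$$-\dot\eta(t) \le (C_3 + C_4 L_\phi)\,\eta(t) + (C_1 + C_2 T)$$
with constants depending only on $M$, $M^0$, and the Lipschitz constants and sup-norms of $f^0, g^0, q^0, q$. Integrating backward from $\eta(T) \le C_0$ then gives
$$\eta(t) \le \bigl(C_0 + C_1(T-t) + C_2(T-t)^2\bigr)\exp\bigl[(C_3 + C_4 L_\phi)(T-t)\bigr],$$
which after bounding $T - t$ by $T$ is exactly the claimed estimate.
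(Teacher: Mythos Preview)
Your proposal is correct and follows essentially the same route as the paper's proof: subtract the ODE \eqref{odemajorvalue} at neighboring lattice points, split the result into a Q-matrix part acting on the difference $z=V^{0,N}_\phi(\cdot,\cdot,\cdot+\tfrac1N e_{kl})-V^{0,N}_\phi$ and a source term $F$, bound $F$ by $C/N+(C'+C''L_\phi)\|z\|$ using Lipschitz properties and the a priori sup bound $\|V^{0,N}_\phi\|_\infty\le\|g^0\|_\infty+T\|f^0\|_\infty$, and close by Gronwall. The only packaging differences are that the paper isolates the maximum-principle step as a separate lemma (Lemma~\ref{odebound}, which yields $\|z(t)\|\le\|z(T)\|+\int_t^T\|F(s)\|\,ds$ directly from the Q-matrix structure) rather than differentiating the supremum, and handles your ``envelope argument'' by writing down once and for all the Lipschitz estimate \eqref{esth} for the reduced Hamiltonian $h^0(t,i^0,x,v)=\inf_{\alpha^0}\{f^0+\sum_{j^0\ne i^0}(v_{j^0}-v_{i^0})q^0\}$ in $(x,v)$.
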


We now turn to the problem of the representative minor player. We consider a strategy profile where the major player uses a feedback function $\phi^0$, the first $(N-1)$ minor players use a feedback function $\phi$ and the remaining (de facto the representative) minor player uses the feedback function $\o\phi$. We recall that $(X_t^N, X_t^{0,N}, \mu_t^N)$ is a Markov process with infinitesimal generator $\mathcal{G}^{N}_{\phi^0,\phi,\o\phi}$ defined as in (\ref{generatorNminor}). We are interested in the representative minor player's expected cost:
\begin{align*}
J^{N}_{\phi^0,\phi,\o\phi}(t,i,i^0,x) :=&\; \mathbb{E}\bigg[\int_{t}^{T} f_{\phi^0, \o\phi}(s,X_s^N, X_s^{0,N},\mu_s^N) ds + g(X_T^N, X_T^{0,N}, \mu_T^N) | \; X_t^N = i, X_t^{0,N} = i^0, \mu_t^N = x\bigg]
\end{align*}
as well as the value function of the representative minor player's optimization problem:
\[
V^{N}_{\phi^0,\phi}(t,i,i^0,x) := \sup_{\alpha\leftrightarrow\o\phi\in\mathbb{A}}J^{N}_{\phi^0,\phi,\o\phi}(t,i,i^0,x).
\]
In full analogy with propositions \ref{majorNproperty} and \ref{majorvalueodeproperty}, we state the following results without proof.

\begin{proposition}\label{minorNproperty}
For all Lipschitz feedback functions $\phi^0$ and $\phi$, there exists a constant $L$ only depending on $T$ and the Lipschitz constants and bounds of $\phi^0,\phi,\o\phi, q^0,q,f, g$ such that for all $N>0$, $(t,i^0, x)\in [0,T]\times \{1,\dots, M^0\}\times\mathcal{P}^N$ and $j,k\in\{1,\cdots,M\}, j\neq k$, we have:
\[
|J^{N}_{\phi^0,\phi,\o\phi}(t,i,i^0,x)|\le \|g\|_{\infty} + T \|f\|_{\infty}, 
\quad\text{and}\quad
J^{N}_{\phi^0,\phi,\o\phi}(t,i,i^0,x+\frac{1}{N}e_{jk}) - J^{N}_{\phi^0,\phi,\o\phi}(t,i,i^0,x)|\le \frac{L}{N}.
\]
\end{proposition}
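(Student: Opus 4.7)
The proof plan is to mimic the argument for Proposition \ref{majorNproperty}, which handles the major player's analog. Because the two statements are parallel in structure, the same techniques transfer, but now the process has the extra component $X^N_t$ (the representative minor player's state), and we have three Lipschitz feedback functions $\phi^0,\phi,\o\phi$ rather than two. The uniform bound $|J^{N}_{\phi^0,\phi,\o\phi}(t,i,i^0,x)|\le \|g\|_\infty + T\|f\|_\infty$ is immediate from the definition of $J^{N}_{\phi^0,\phi,\o\phi}$ as an expectation of a bounded integrand plus a bounded terminal cost, using the triangle inequality inside the expectation.

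For the finite-difference estimate, first I would write down the Kolmogorov backward ODE system that characterizes $J^{N}_{\phi^0,\phi,\o\phi}$, in the spirit of Proposition \ref{odemajorpayoffprop}: by Dynkin's formula, $J^{N}_{\phi^0,\phi,\o\phi}$ is the unique classical solution of $\dot\theta + \mathcal{G}^{N}_{\phi^0,\phi,\o\phi}\theta + f_{\phi^0,\o\phi}=0$ with terminal condition $\theta(T,\cdot,\cdot,\cdot)=g$. Existence and uniqueness of this ODE on $[0,T]\times\{1,\dots,M\}\times\{1,\dots,M^0\}\times\mathcal{P}^N$ follow from Cauchy-Lipschitz once we observe that the right-hand side is a Lipschitz function of $\theta$ (the generator involves only finite differences with coefficients that are bounded by Hypothesis \ref{boundassump} times $N\cdot x_i$, but $x_i\in[0,1]$ and there are finitely many states in $\mathcal{P}^N$).

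The main step is then to set $\Delta(t,i,i^0,x) := J^{N}_{\phi^0,\phi,\o\phi}(t,i,i^0,x+\tfrac{1}{N}e_{jk}) - J^{N}_{\phi^0,\phi,\o\phi}(t,i,i^0,x)$ and to carry out the more natural probabilistic coupling argument, which gives transparent control of the constants. I would couple two copies of the $(N+1)$-player system, one with initial state $(i,i^0,x)$ and one with $(i,i^0,x+\tfrac{1}{N}e_{jk})$, by keeping the major player's initial state, the representative minor player's initial state $i$, and all but one of the remaining $(N-1)$ minor players identical, while letting the single discrepant minor player start in state $k$ in the first copy and state $j$ in the second. Under the Lipschitz assumption on $q,q^0,\phi^0,\phi,\o\phi$, the jump rates in the two systems differ by at most $C\|\mu^N-\tilde\mu^N\|=O(1/N)$ as long as only the one discrepant particle has failed to couple. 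Using a thinning/splitting construction of the jumps driven by common Poisson clocks, the two systems can be made to share every jump of every identical particle, so that the only source of divergence between $\mu^N_t$ and $\tilde\mu^N_t$ comes from (i) the single mismatched particle, whose contribution is bounded by $2/N$ at all times, and (ii) the small mismatch in rates propagated through a Gronwall-type argument, which adds another $O(1/N)$ over the bounded horizon $T$.

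Combining the pointwise bound $\|\mu^N_t - \tilde\mu^N_t\|_\infty\le L/N$ with the Lipschitz property of $f$ and $g$ in $(X^0,\mu)$ and the fact that $X_t=\tilde X_t$, $X^0_t=\tilde X^0_t$ under the coupling on the high-probability event of successful synchronization of all non-discrepant particles, yields $|\Delta(t,i,i^0,x)|\le L/N$ after one Gronwall application on $[t,T]$. The main obstacle I anticipate is the careful bookkeeping of the coupling: the discrepant particle never couples, so its state may diverge further, and the cascade of rate differences must be absorbed into the Gronwall constant, which is where the dependence of $L$ on the Lipschitz moduli of $\phi^0,\phi,\o\phi,q^0,q,f,g$ and on $T$ enters. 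Alternatively, one can dispense with the coupling and work entirely with the ODE for $\Delta$, bounding its right-hand side termwise using the Lipschitz properties plus the already-established $L^\infty$ bound on $J^N$; this is essentially the strategy used for Proposition \ref{majorNproperty} and the details are nearly identical here, with the extra sum over $j^0$ and over $j\ne i$ in $\mathcal{G}^N_{\phi^0,\phi,\o\phi}$ posing no new difficulty.
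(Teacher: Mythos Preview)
The paper does not give a separate proof of this proposition; it explicitly states the result ``without proof'' as the direct analog of Proposition~\ref{majorNproperty}, whose proof (in the appendix) proceeds entirely via the ODE route: write $J^N$ as the solution of a linear ODE system with Q-matrix structure, apply Lemma~\ref{odebound} for the uniform bound, then subtract the two ODEs for $\theta(\cdot,\cdot,\cdot,x+\tfrac{1}{N}e_{kl})$ and $\theta(\cdot,\cdot,\cdot,x)$ to get a new ODE for the difference $z$, bound the source term $F$ by $C/N + L\|z\|$ using the Lipschitz and boundedness hypotheses, and conclude with Lemma~\ref{odebound} plus Gronwall. Your ``alternative'' at the end is therefore exactly the paper's intended argument, and your remark that the extra state component $i$ and the third feedback $\o\phi$ introduce nothing new is correct.

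Your primary proposal, the probabilistic coupling, is a genuinely different route. It is intuitively appealing and would also work, but your sketch understates its cost. The claim that under the coupling $X_t=\tilde X_t$ and $X^0_t=\tilde X^0_t$ hold on a high-probability event is not automatic: the jump rates of both $X^0$ and the representative minor player depend on $\mu^N$, so as soon as $\mu^N\ne\tilde\mu^N$ (which happens from time zero because of the single discrepant particle), the major player and the representative minor player can themselves decouple, which in turn perturbs the rates of \emph{all} the other minor players. Controlling this cascade requires a Gronwall argument on the expected number of decoupled particles (or on $\mathbb{E}\|\mu^N_t-\tilde\mu^N_t\|$), not just on the single discrepant particle, and once you write that down carefully you have essentially reproduced the ODE estimate in probabilistic clothing. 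The ODE approach buys you a cleaner bookkeeping because Lemma~\ref{odebound} exploits the Q-matrix structure directly, so the ``coupling'' is hidden in the positivity of the semigroup $K(t,s)$; the coupling approach buys you a more transparent probabilistic picture but at the price of tracking the decoupling event explicitly.
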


\begin{proposition}\label{minorvalueodeproperty}
There exist constants $D_0, D_1, D_2, D_3, D_4, D_5 >0$  depending only on $M^0$, $M$, the Lipschitz constants and bounds of $q^0,q,f, g$ such that for all Lipschitz feedback functions $\phi^0$ and $\phi$ with Lipschitz constants $L_{\phi^0}$ and $L_\phi$ respectively, and for all $N>0$, $(t,i^0, x)\in [0,T]\times \{1,\dots, M^0\}\times\mathcal{P}^N$ and $j,k\in\{1,\cdots,M\}, j\neq k$, we have:
\[
|V^{0,N}_{\phi^0,\phi}(t,i,i^0,x+\frac{1}{N}e_{jk}) - V^{0,N}_{\phi^0,\phi}(t,i,i^0,x)|\le 
 \frac{D_0 + D_1 T + D_2 T^2 + D_3 L_{\phi^0} T}{N}\exp[(D_4 + D_5 L_\phi))T].
\]
\end{proposition}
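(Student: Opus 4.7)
The plan is to mirror the argument of Proposition~\ref{majorvalueodeproperty}, adapted to the representative minor player's value function. The two new features are the additional state variable $i$ for the singled-out minor player and the dependence of its running cost and transition rate on the major player's feedback through $f(\cdot,\cdot,\o\alpha,i^0,\phi^0(t,i^0,x),x)$ and $q(\cdot,\cdot,\cdot,\o\alpha,i^0,\phi^0(t,i^0,x),x)$; this dependence is responsible for the $L_{\phi^0}T$ term in the polynomial prefactor, which is absent from the major-player estimate.

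First I would establish the analog of Proposition~\ref{odemajorvalueprop}: $V^N_{\phi^0,\phi}$ is the unique classical solution of the Bellman ODE obtained by taking the infimum over $\o\alpha\in A$ in the $\o\phi$-dependent terms of $\dot\theta + \cG^N_{\phi^0,\phi,\o\phi}\theta + f_{\phi^0,\o\phi}=0$, with terminal data $g$ (Hypothesis~\ref{assumpopt3} supplies a Lipschitz minimizer, hence existence and uniqueness by Cauchy-Lipschitz). For fixed $j,k\in\{1,\ldots,M\}$ with $j\neq k$, I set
\[
\Delta^N_{jk}(t,i,i^0,x) := V^N_{\phi^0,\phi}(t,i,i^0,x+\tfrac{1}{N}e_{jk})-V^N_{\phi^0,\phi}(t,i,i^0,x),
\qquad K_N(t) := N\sup_{j,k,i,i^0,x}|\Delta^N_{jk}(t,i,i^0,x)|,
\]
write the Bellman ODE at $x$ and at $x+\tfrac{1}{N}e_{jk}$, subtract, and bound the difference of the two infimized Hamiltonians by the envelope inequality (evaluate both at the minimizer of one of them). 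This leads to a differential inequality of the form
\[
-\dot K_N(t) \le (D_4+D_5 L_\phi)K_N(t) + r(t), \qquad K_N(T)\le L_g,
\]
where $L_g$ is the $x$-Lipschitz constant of $g$ from Hypothesis~\ref{lipassump} and $r(t)$ is a source of order $(1+L_{\phi^0})(1+\|V^N_{\phi^0,\phi}\|_\infty)$ collecting the $x$-Lipschitz corrections of $f$, $q$, $q^0$ composed with $\phi^0$. Since $\|V^N_{\phi^0,\phi}\|_\infty\le \|g\|_\infty+T\|f\|_\infty$ by Proposition~\ref{minorNproperty}, $\int_t^T r(s)\,ds$ is a polynomial of degree at most two in $T$ with linear dependence on $L_{\phi^0}$, so Gronwall backward from $T$ gives the claim after division by $N$.

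The main obstacle is the bookkeeping that turns the apparently singular $O(N)$ coefficients in $\cG^N_{\phi^0,\phi,\o\phi}$ into $N$-independent rates in the differential inequality for $K_N$. The key observation is that every $Nx_{k'}$ coefficient multiplies an increment $V^N_{\phi^0,\phi}(x+\tfrac{1}{N}e_{k'j'})-V^N_{\phi^0,\phi}(x)$; forming the discrete $(j,k)$-derivative converts the main piece into the cross-difference $\Delta^N_{jk}(x+\tfrac{1}{N}e_{k'j'})-\Delta^N_{jk}(x)$, of magnitude at most $2K_N/N$, so its product with $Nx_{k'}$ is $O(x_{k'}K_N)$ and summation over $k'$ using $\sum_{k'}x_{k'}\le 1$ gives a bounded $O(K_N)$ contribution. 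The $L_\phi$-dependence of the self-coupling rate reflects the fact that the $x$-Lipschitz constant of the McKean-Vlasov drift $\mathbf{v}$ from Proposition~\ref{existfeller} is $O(L_q(1+L_\phi))$; this controls the sensitivity of the $\mu_t$-evolution to its initial condition, which is the mechanism by which an $O(1/N)$ initial perturbation gets amplified at exponential rate $\exp[(D_4+D_5L_\phi)T]$ in the value function, whereas the $L_{\phi^0}$-contribution enters only through the residual source $r$ and therefore appears linearly, not in the exponent.
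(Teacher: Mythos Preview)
Your approach is correct and essentially the one the paper has in mind: the proposition is stated ``in full analogy'' with Propositions~\ref{majorNproperty} and~\ref{majorvalueodeproperty} and is not proved separately, so the intended argument is exactly the subtraction-of-Bellman-ODEs plus Gronwall that you describe, with the cross-difference mechanism you isolate being precisely the reason the $O(N)$ coefficients collapse.

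Two minor points. First, the paper does not write a differential inequality for the supremum $K_N(t)$ directly (this would require a Dini-derivative argument since the max of finitely many smooth functions is only piecewise smooth); instead it keeps the full vector $z=\Delta^N_{jk}$, observes that after subtraction the ``main'' part retains the Q-matrix structure, and invokes the comparison Lemma~\ref{odebound} to obtain the integral inequality $\|z(t)\|\le \|z(T)\|+\int_t^T\|F(s)\|\,ds$ before applying Gronwall. Your formulation is equivalent but slightly less clean to justify. Second, under Hypothesis~\ref{assumpopt1} (which is in force throughout Section~\ref{se:chaos}), the minor player's $f$ and $q$ do not depend on $\alpha^0$, so the $L_{\phi^0}$ contribution in the source term $r(t)$ does not come from the Hamiltonian $h$ as you suggest; it comes from the residual of the $q^0_{\phi^0}$ term in the generator, whose $x$-Lipschitz constant is $L_{q^0}(1+L_{\phi^0})$ and which multiplies a factor bounded by $2\|V^N_{\phi^0,\phi}\|_\infty\le 2(\|g\|_\infty+T\|f\|_\infty)$. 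The conclusion---a source of size $O((1+L_{\phi^0})(1+T)/N)$ integrating to the claimed polynomial prefactor---is unchanged.
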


\subsection{Propagation of Chaos}

We now prove two important limiting results.
They are related to the propagation of chaos in the sense that they identify the limiting behavior of an individual when interacting with the mean field.
First, we prove uniform convergence of the value functions of the individual players' optimization problems. Combined with the gradient estimates
proven in the previous subsection, this establishes the Lipschitz property of the value functions in the mean field limit. Second, we prove that the  expected costs of the individual players in the $(N+1)$ - player game converge to their mean field limits at the rate $N^{-1/2}$. This will help us show that the Nash equilibrium of the mean field game provides approximative Nash equilibria for the finite player games. 

\begin{theorem}\label{conv1}
For all Lipschitz strategy $\phi^0,\phi$, we have:
\begin{align}
\sup_{t,i^0,x} |V^0_{\phi}(t,i^0,x) - V^{0,N}_{\phi}(t,i^0,x)| \rightarrow 0,& \;\;\;N\rightarrow +\infty \label{convmajorvalue}\\
\sup_{t,i,i^0,x} |V_{\phi^0,\phi}(t,i,i^0,x) - V^{N}_{\phi^0,\phi}(t,i,i^0,x)| \rightarrow 0,& \;\;\;N\rightarrow +\infty \label{convminorvalue}
\end{align}
\end{theorem}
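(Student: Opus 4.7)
The plan is to combine the uniform gradient estimates from the previous subsection with the stability of viscosity solutions under suitable perturbations, and to close the argument using the comparison principle. In essence, I want to view $V^{0,N}_{\phi}$ as a viscosity solution of a discrete HJB equation approximating (\ref{hjbmajor}), extract a subsequential uniform limit by compactness, identify the limit as a viscosity solution of (\ref{hjbmajor}), and then invoke uniqueness to conclude that the whole sequence converges to $V^0_{\phi}$.

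First, I would establish relative compactness of the family $\{V^{0,N}_{\phi}\}_{N \ge 1/\varepsilon}$. Uniform boundedness is a direct consequence of the boundedness of $f^0$ and $g^0$. Proposition \ref{majorvalueodeproperty} gives Lipschitz regularity in $x$ on the grid $\mathcal{P}^N$ with a constant independent of $N$ (the bound $L/N$ applied to a jump of size $1/N$). Using the ODE (\ref{odemajorvalue}) satisfied by $V^{0,N}_{\phi}$, the boundedness of $q^0, q, f^0$ together with the $x$-Lipschitz estimate yields uniform control on $\partial_t V^{0,N}_{\phi}$. Extending $V^{0,N}_\phi(\cdot, i^0, \cdot)$ to $[0,T]\times \mathcal{P}$ by any Lipschitz extension preserving the constant, we may apply Arzelà–Ascoli on the compact space $[0,T]\times\{1,\dots, M^0\}\times\mathcal{P}$ to extract from any subsequence a further subsequence converging uniformly to a function $\hat V^0$ that is Lipschitz in $(t,x)$.

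Next, I would identify $\hat V^0$ as a viscosity solution of (\ref{hjbmajor}). Take $\theta \in C^\infty([0,T]\times\mathcal{P})$ and $(t_0, i^0_0, x_0)$ at which $\hat V^0(\cdot, i^0_0, \cdot) - \theta$ attains a strict local maximum. Uniform convergence gives nearby maximizers $(t_N, x_N) \to (t_0, x_0)$ of $V^{0,N}_\phi(\cdot, i^0_0, \cdot) - \theta$. At these maximizers the inequality $V^{0,N}_\phi(t_N, i^0_0, x_N + \tfrac1N e_{ij}) - V^{0,N}_\phi(t_N, i^0_0, x_N) \le \theta(t_N, x_N + \tfrac1N e_{ij}) - \theta(t_N, x_N)$ holds, and similarly $\partial_t V^{0,N}_\phi(t_N, i^0_0, x_N) = \partial_t \theta(t_N, x_N)$ when the max is interior. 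Inserting these bounds into the discrete HJB of Proposition \ref{odemajorvalueprop} (keeping the $V^{0,N}_\phi(t_N, j^0, x_N)$ terms, $j^0 \ne i^0_0$, which converge by uniform convergence to $\hat V^0(t_0, j^0, x_0) - \hat V^0(t_0, i^0_0, x_0)$) and letting $N \to \infty$, the finite-difference operator on $\theta$ converges to the differential operator appearing in (\ref{hjbmajor}) by the same calculation as in (\ref{generatorapproxest}); the continuity and compactness of $A^0$ guarantees that the infimum passes to the limit. This yields the subsolution inequality at $(t_0, i^0_0, x_0)$. The supersolution property is handled symmetrically. At $t = T$, the terminal condition $V^{0,N}_\phi(T, \cdot, \cdot) = g^0$ passes directly to $\hat V^0(T, \cdot, \cdot) = g^0$.

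By the comparison principle of Theorem \ref{compmajor}, $\hat V^0 = V^0_\phi$. Since every subsequential limit of $V^{0,N}_\phi$ coincides with the same function $V^0_\phi$, the entire sequence converges uniformly, yielding (\ref{convmajorvalue}). The argument for (\ref{convminorvalue}) is strictly analogous: use Proposition \ref{minorvalueodeproperty} for the equi-Lipschitz bound in $x$, the discrete HJB analogue of (\ref{odemajorvalue}) for the minor player, Theorem \ref{convtheominor} for the convergence of the generators, and the comparison principle from Theorem \ref{hjbtheominor} for uniqueness. The main obstacle is the viscosity-stability step: one must verify carefully that the discrete HJB-type relation satisfied by $V^{0,N}_\phi$ passes to the continuous HJB in the limit, which hinges on the uniform approximation of $\mathcal{G}^{0,N}_{\phi^0,\phi}\theta$ by $\mathcal{G}^0_{\phi^0,\phi}\theta$ for smooth $\theta$ (already available from (\ref{generatorapproxest})) and on controlling the infimum over $\alpha^0$ under this passage to the limit.
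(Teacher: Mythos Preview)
Your approach is correct and takes a somewhat different route from the paper's. The paper uses the Barles--Perthame half-relaxed limits $\bar v := \limsup^* V^{0,N}_\phi$ and $\underline v := \liminf_* V^{0,N}_\phi$, shows directly that $\bar v$ is a viscosity subsolution and $\underline v$ a supersolution of (\ref{hjbmajor}), and concludes $\bar v = \underline v = V^0_\phi$ by comparison; uniform convergence then follows. This avoids any a priori compactness argument. You instead exploit the uniform Lipschitz bounds from Proposition \ref{majorvalueodeproperty} (and the ODE for the time regularity) to invoke Arzel\`a--Ascoli, extract subsequential uniform limits, and identify each as a viscosity solution. Both arguments close via the same comparison principle. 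Your route is arguably more direct here because the Lipschitz estimates are already in hand, and it sidesteps a subtlety the paper must handle: in the half-relaxed-limit argument, for $j^0 \ne i^0$ one only knows $v_{N_n}(t_n, j^0, x_n) \to r_{j^0} \le \bar v(\bar t, j^0, \bar x)$, and one must use the nonnegativity of $q^0(t,i^0,j^0,\alpha^0,x)$ to push this inequality through the Hamiltonian. Under your uniform convergence along the subsequence, $V^{0,N}_\phi(t_N, j^0, x_N) \to \hat V^0(t_0, j^0, x_0)$ directly, so no such monotonicity trick is needed.

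One point deserves care: the discrete HJB of Proposition \ref{odemajorvalueprop} is stated for $x \in \mathcal{P}^N$, whereas your maximizers $(t_N, x_N)$ live in $\mathcal{P}$ after extension. You should either choose the extension to be piecewise-linear interpolation (as the paper does), so that the discrete HJB holds on all of $\mathcal{P}$ up to an $O(1/N)$ error coming from the Lipschitz constants of the data, or project $x_N$ to a nearest grid point $\tilde x_N \in \mathcal{P}^N$ and absorb the $O(1/N)$ discrepancy via the uniform Lipschitz bounds. Either fix is routine, but ``any Lipschitz extension'' is too loose as stated.
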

\begin{proof}
We only provide a proof for (\ref{convmajorvalue}) as (\ref{convminorvalue}) can be shown in the exact the same way.

\noindent (i) Fix a Lipschitz strategy $\phi$ for the minor players. To simplify the notation, we set $v_N := V_{\phi}^{0, N}$. Notice that $(t,i^0,x)\rightarrow v_N(t,i^0,x)$ is only defined on $[0,T]\times\{1,\dots,M^0\}\times \mathcal{P}^N$, so our first step is to extend the domain of $v^N(t,i^0,x)$ to $[0,T]\times\{1,\dots,M^0\}\times\mathcal{P}$. This can be done by considering the linear interpolation of $v^N$. More specifically, for any $x\in\mathcal{P}$, we denote $x_k, k=1,\dots, 2^{M-1}$ the $2^{M-1}$ closest neighbors of $x$ in the set $\mathcal{P}^N$. There exists $\alpha_k, k=1,\dots,2^{M-1}$ positive constants such that $x = \sum_{k=1}^{2^{M-1}} \alpha_k x_k$. We then define the extension, still denoted as $v_N$, to be $v^N(t,i^0,x):=\sum_{k=1}^{2^{N-1}} \alpha_k v_N(t,i^0,x_k)$. It is straightforward to verify that $v_N$ is continuous in $(t,x)$, Lipschitz in $x$ uniformly in $(t,i^0)$, and $C^1$ in $t$.  Using the boundedness and Lipschitz property of $v_N, f^0, q^0, q,\phi$, we obtain a straight forward estimation:
\begin{equation}\label{convest1}
\begin{aligned}\allowdisplaybreaks
\frac{L}{N} \ge&\; \bigl |\dot v_N(t,i^0,x) + \inf_{\alpha^0\in A^0}\{f^0(t, i^0, \alpha^0, x) + \sum_{j^0, j^0 \neq i^0} (v_N(t, j^0, x) - v_N(t, i^0, x))   q^0(t, i^0, j^0, \alpha^0, x)\}\\
&+ \sum_{(i,j), j\neq i}(v_N(t, i^0, x + \frac{1}{N}e_{ij} ) -v_N(t, i^0, x )) N x_i  q(t, i, j, \phi(t, i, i^0, x), i^0, x)\bigr |
\end{aligned}
\end{equation}
\begin{equation}\label{convest2}
\frac{L}{N} \ge \; |v_N(t,i^0,x) - g^0(i^0, x)|
\end{equation}
where the constant $L$ only depends on the bounds and Lipschitz constants of $f^0, q^0, q$ and $\phi$.

\vspace{3mm}
\noindent (ii) Now let us denote $\bar v(t,i^0,x):=\lim\sup^*v_N(t,i^0,x)$ and $\munderbar v_N(t,i^0,x):=\lim\inf_*v_N(t,i^0,x)$, see Section 9.3 for definitions of the operators $\lim\sup^*$ and $\lim\inf_*$. We show that $\bar v$ and $\munderbar v$ are  viscosity subsolution and viscosity supersolution of the HJB equation (\ref{hjbmajor}) of the major player
respectively. Recall that we assume now that $q$ does not depend on $\alpha$. Then since $V_{\phi}^{0}$ is also a viscosity solution to (\ref{hjbmajor}), the comparison principle allows us to conclude that $\bar v(t,i^0,x) = \munderbar v(t,i^0,x) = V_{\phi}^{0}$ and the uniform convergence follows by standard arguments.

\vspace{3mm}
\noindent (iii) It remains to show that $\bar v$ is a viscosity subsolution to the PDE (\ref{hjbmajor}). The proof of $\munderbar v$ being a viscosity supersolution can be done in exactly the  same way. Let $\theta$ be a smooth function and $(\bar t,i^0,\bar x)\in [0,T]\times \{1,\dots, M^0\} \times \mathcal{P}$ be such that $(t,x)\rightarrow\bar v(t,i^0,x) - \theta(t,x)$ has maximum at $(\bar t,\bar x)$ and $\bar v(\bar t,i^0,\bar x) =\theta(\bar t,\bar x)$. Then by Lemma 6.1. in \cite{lions}, there exists sequences $N_n\rightarrow +\infty$, $t_n\rightarrow \bar t$, $x_n\rightarrow \bar x$ such that for each $n$, the mapping $(t,x) \rightarrow v_{N_n}(t,i^0,x) - \theta(t,x)$ attains a maximum at $t_n, x_n$ and $\delta_n :=  v_{N_n}(t_n,i^0,x_n) - \theta(t_n,x_n)\rightarrow 0$. Instead of extracting a subsequence, we may assume that $v_{N_n}(t_n,\cdot,x_n) \rightarrow (r_1, \dots, r_{M^0})$, where $r_{j^0} \le \bar v(\bar t,j^0,\bar x)$ and $r_{i^0} = \bar v(\bar t,i^0,\bar x)$.

Assume that $\bar t=T$, then $\bar v (\bar t, i^0, \bar x) \le g^0(i^0, \bar x)$ follows easily from (\ref{convest2}). Now assume that $\bar t<T$. Instead of extracting a subsequence, we may assume that $t_n<T$ for all $n$. Then by maximality we have $\partial_t \theta(t_n,x_n) = \partial_t v_{N_n}(t_n,x_n)$. Again by maximality, we have for all $i,j = 1,\dots, M, i\neq j$:
\[
v_{N_n}(t_n, i^0, x_n + \frac{1}{N_n}e_{i,j}) - v_{N_n}(t_n, i^0, x_n) \le \theta(t_n, i^0, x_n + \frac{1}{N_n}e_{i,j}) - \theta(t_n, i^0, x_n).
\]
Injecting the above inequalities into the estimation (\ref{convest1}) and using the postivity of $q$, we obtain:
\begin{align*}
-\frac{L}{N_n} \le&\; \partial_t\theta(t_n,i^0,x_n) + \inf_{\alpha^0\in A^0}\bigg\{f^0(t_n, i^0, \alpha^0, x_n) + \sum_{j^0, j^0 \neq i^0} (v_{N_n}(t_n, j^0, x_n) - v_{N_n}(t_n, i^0, x_n))   q^0(t_n, i^0, j^0, \alpha^0, x_n)\bigg\}\\
&+ \sum_{(i,j), j\neq i}(\theta(t_n, i^0, x_n + \frac{1}{N_n}e_{ij} ) -\theta(t_n, i^0, x_n )) N_n (x_n)_i  q(t_n, i, j, \phi(t_n, i, i^0, x_n), i^0, x_n).
\end{align*}
Taking limit in $n$ we obtain:
\begin{align*}
0 \le&\; \partial_t\theta(\bar t,i^0,\bar x) + \inf_{\alpha^0\in A^0}\{f^0(\bar t, i^0, \alpha^0, \bar x) + \sum_{j^0, j^0 \neq i^0} (r_{j^0} - r_{i^0})   q^0(\bar t, i^0, j^0, \alpha^0, \bar x)\}\\
&+ \sum_{(i,j), j\neq i}(\mathbbm{1}_{j\neq M} \partial_{x_j}\theta(\bar t, i^0, \bar x)- \mathbbm{1}_{i\neq M} \partial_{x_i}\theta(\bar t, i^0, \bar x)) \bar x_i  q(\bar t, i, j, \phi(\bar t, i, i^0, \bar x), i^0, \bar x)
\end{align*}
Now since $q^0$ is positive and $r_{j^0} \le \bar v(\bar t,j^0,\bar x)$ for all $j^0 \neq i^0$ and $r_{i^0} = \bar v(\bar t,i^0,\bar x)$ we have:
\begin{align*}
&\;\;\inf_{\alpha^0\in A^0}\{f^0(\bar t, i^0, \alpha^0, \bar x) + \sum_{j^0, j^0 \neq i^0} (r_{j^0} - r_{i^0})   q^0(\bar t, i^0, j^0, \alpha^0, \bar x)\} \\
\le&\;\;\inf_{\alpha^0\in A^0}\{f^0(\bar t, i^0, \alpha^0, \bar x) + \sum_{j^0, j^0 \neq i^0} (\o v(\bar t,j^0,\bar x) - \o v(\bar t,i^0,\bar x))   q^0(\bar t, i^0, j^0, \alpha^0, \bar x)\}.
\end{align*}
The desired inequality for viscosity subsolution follows immediately. This completes the proof.
\end{proof}
As an immediate consequence of the uniform convergence and the gradient estimates for the value functions $V_{\phi}^{0,N}$ and $V_{\phi^0,\phi}^{N}$, we have:

\begin{corollary}\label{valuefunctionlipschitzconstant}
Under hypotheses \ref{assumpopt1}-\ref{assumpopt3}, for all Lipschitz feedback functions $\phi^0$ and $\phi$ with Lipschitz constants $L_{\phi^0}$ and $L_\phi$ respectively, the value functions $V_{\phi}^0$ and $V_{\phi^0,\phi}^0$ are Lipschitz in $(t,x)$. More specifically, there exist strictly positive constants $B$, $C_i, i=0,\dots,4$, $D_i, i= 0,\dots, 5$ that only depend on the bounds and Lipschitz constants of $f,f^0,g^0,g,q,q^0$ such that
\begin{align}
|V_{\phi}^0(t,i^0,x) - V_{\phi}^0(s,i^0,y)| \le& B |t-s| + (C_0 + C_1 T + C_2 T^2)\exp((C_3 + C_4 L_\phi)T)\|x - y\|\\
|V_{\phi^0,\phi}(t,i,i^0,x) - V_{\phi^0,\phi}(s,i,i^0,y)| \le& B |t-s| + (D_0 + D_1 T + D_2 T^2 + D_3 L_{\phi^0} T)\exp((D_4 + D_5 L_\phi)T)\|x - y\|.
\end{align}
\end{corollary}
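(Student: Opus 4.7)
The plan is to leverage the two propositions that were just established for the finite-player value functions --- the finite-difference gradient estimate (Proposition \ref{majorvalueodeproperty} and its minor-player analog \ref{minorvalueodeproperty}), and the uniform convergence $V^{0,N}_{\phi}\to V^{0}_{\phi}$, $V^{N}_{\phi^0,\phi}\to V_{\phi^0,\phi}$ from Theorem \ref{conv1} --- together with the ODE characterization of $V^{0,N}_{\phi}$ in Proposition \ref{odemajorvalueprop}. The spatial Lipschitz estimate at the mean-field level will be obtained by chaining one-step lattice increments; the temporal estimate will come from bounding the right-hand side of the ODE uniformly in $N$ and then passing to the limit.

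For the $x$-Lipschitz property of $V^0_\phi$, I first observe that any two points $x,y\in\mathcal{P}^N$ can be joined by a path in $\mathcal{P}^N$ made of elementary moves of the form $\frac{1}{N}e_{ij}$, with the number of such moves controlled by $c_M N \|x-y\|$ for a dimensional constant $c_M$ depending only on $M$. Applying Proposition \ref{majorvalueodeproperty} to each step and summing gives
\[
|V^{0,N}_\phi(t,i^0,x)-V^{0,N}_\phi(t,i^0,y)|\le c_M (C_0+C_1 T+C_2 T^2)\exp\big((C_3+C_4 L_\phi)T\big)\,\|x-y\|,
\]
uniformly in $N$. Then for arbitrary $x,y\in\mathcal{P}$, I approximate by $x^N,y^N\in\mathcal{P}^N$ with $x^N\to x$, $y^N\to y$, and use the uniform convergence (together with the continuity of $V^0_\phi$ and the linear interpolation extension of $V^{0,N}_\phi$ already constructed in the proof of Theorem \ref{conv1}) to pass to the limit. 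After relabeling the constants to absorb the factor $c_M$, this yields the stated spatial Lipschitz estimate for $V^0_\phi$.

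For the $t$-Lipschitz property of $V^0_\phi$, I return to the ODE (\ref{odemajorvalue}) satisfied by $V^{0,N}_\phi$ and bound $|\dot V^{0,N}_\phi(t,i^0,x)|$ uniformly in $N$. The pointwise bound $\|V^{0,N}_\phi\|_\infty\le \|g^0\|_\infty+T\|f^0\|_\infty$ (obtained exactly as in Proposition \ref{majorNproperty}) controls the jump terms $(V^{0,N}_\phi(t,j^0,x)-V^{0,N}_\phi(t,i^0,x))q^0$. The discrete-gradient terms $(V^{0,N}_\phi(t,i^0,x+\tfrac{1}{N}e_{ij})-V^{0,N}_\phi(t,i^0,x))\,Nx_iq$ are bounded by Proposition \ref{majorvalueodeproperty}: the factor of $N$ cancels the $1/N$ in the increment bound, leaving a quantity controlled by $\|q\|_\infty$ times the spatial constant from that proposition. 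Combining these gives $|\dot V^{0,N}_\phi|\le B'$ uniformly, so $V^{0,N}_\phi$ is $B'$-Lipschitz in $t$, and the uniform convergence in Theorem \ref{conv1} transfers this to $V^0_\phi$. The constant $B$ in the statement is then obtained by absorbing the contribution of the spatial constant into a single symbol.

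The argument for $V_{\phi^0,\phi}$ is entirely parallel: the representative minor player's value function satisfies an ODE analogous to (\ref{odemajorvalue}) (derived from dynamic programming with generator $\mathcal{G}_{\phi^0,\phi,\o\phi}$), Proposition \ref{minorNproperty} gives the uniform pointwise bound, Proposition \ref{minorvalueodeproperty} gives the uniform finite-difference estimate in which the dependence on $L_{\phi^0}$ enters linearly inside the bracket, and Theorem \ref{conv1} provides the uniform convergence to $V_{\phi^0,\phi}$. The chaining of lattice increments and the bounding of the time derivative proceed identically, yielding the second inequality with the prescribed dependence on $L_{\phi^0}$ and $L_\phi$. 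The main technical point to be careful about is ensuring the chaining path stays inside $\mathcal{P}^N$ (which is easy using the interpretation of lattice points as state-profiles of $N$ minor players, each single move corresponding to a single-player state flip) and tracking how the dimensional constant introduced by the chaining is absorbed into the constants $C_i$ and $D_i$.
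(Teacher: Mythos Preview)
Your proposal is correct and follows essentially the same strategy as the paper: obtain the $x$-Lipschitz constant by combining the finite-difference estimates of Propositions \ref{majorvalueodeproperty} and \ref{minorvalueodeproperty} with the uniform convergence of Theorem \ref{conv1}, and obtain the $t$-Lipschitz constant by bounding $\dot V^{0,N}_\phi$ from the ODE \eqref{odemajorvalue} and passing to the limit. The paper's write-up is terser --- it calls the spatial part ``an immediate consequence'' and asserts that $\frac{d}{dt}V^{0,N}_\phi$ is ``bounded by the bounds of $f^0,g^0,q^0,q$'' --- whereas you spell out the lattice chaining and, more carefully, note that controlling the discrete-gradient terms $N(\,V^{0,N}_\phi(t,i^0,x+\tfrac{1}{N}e_{ij})-V^{0,N}_\phi(t,i^0,x)\,)$ in the ODE genuinely requires the estimate of Proposition \ref{majorvalueodeproperty}; this is the correct observation, since without it the naive bound on that sum blows up like $N$.
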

\begin{proof}
The Lipschitz property in $x$ is an immediate consequence of Theorem \ref{conv1}, Proposition \ref{majorvalueodeproperty} and Proposition \ref{minorvalueodeproperty}. To prove the Lipschitz property on $t$, we remark that for each $N$, $V^{0,N}_{\phi}$ and $V^{N}_{\phi^0,\phi}$ are Lipschitz in $t$, uniformly in $N$. Indeed $V^{0,N}_{\phi}$ is a classical solution of the system \eqref{odemajorvalue} of ODEs. Then it is clear that $\frac{d}{dt}V^{0,N}_{\phi}$ is bounded by the bounds of $f^0,g^0,q^0,q$. We deduce that $V^{0,N}_{\phi}$ is Lipschitz in $t$ with a Lipschitz constant that only depends on $M, M^0$ and the bounds of $f^0,g^0,q^0,q$. By convergence of $V^{0,N}_{\phi}$, we conclude that $V^{0}_{\phi}$ is also Lipschitz in $t$ and shares the  Lipschitz constant of $V^{0,N}_{\phi}$. The same argument applies to $V_{\phi^0,\phi}$.
\end{proof} 
\begin{remark}\label{remarkassumptionlipschitz}
From Corollary \ref{valuefunctionlipschitzconstant}, we see that Hypothesis \ref{assumpopt4} holds when $T$ is sufficiently small. Indeed we can first choose $L_{\phi^0} > L_a (1 + \max\{B, C_0\})$ and $L_\phi > L_b (1 + \max\{B, D_0\})$ and then choose $T$ sufficiently small, so that the Lipschitz constant of $V_{\phi}^0$ is smaller than $(L_{\phi^0}/L_a-1)$ and the Lipschitz constant of $V_{\phi^0, \phi}$ is smaller than $(L_\phi/L_b-1)$.
\end{remark}

We now state our second result on the propagation of chaos: the expected cost of an individual player in the $(N+1)$-player game converges to the expected cost in the mean field game at a rate of $N^{-1/2}$.

\begin{theorem}\label{propachaos}
There exists a constant $L$ depending only on $T$ and the Lipschitz constants of $\phi^0$, $\phi$, $\o\phi$, $f$, $f^0$, $g$, $g^0$, $q$ and $q^0$ such that for all $N>0$, $t\le T$, $x\in\mathcal{P}^N$, $i=1,\dots, M$ and $i^0=1,\dots,M^0$, we have
\begin{align*}
|J_{\phi^0,\phi}^{0,N}(t,i^0,x) - J_{\phi^0,\phi}^0(t,i^0,x)| \le& L/\sqrt{N}\\
|J_{\phi^0,\phi,\o\phi}^{N}(t,i,i^0,x) - J_{\phi^0,\phi,\o\phi}(t,i,i^0,x)| \le& L/\sqrt{N}.
\end{align*}
\end{theorem}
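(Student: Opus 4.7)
The plan is to derive both estimates by coupling the $N$-player dynamics with the mean-field limits identified in Theorems~\ref{convtheomajor} and \ref{convtheominor}, thereby reducing both bounds to a standard $O(N^{-1/2})$ propagation-of-chaos estimate on the empirical measure. I concentrate on the first inequality; the second is handled by the same scheme augmented with a tagged minor-player coordinate.

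Writing out the probabilistic representations
\begin{align*}
J^{0,N}_{\phi^0,\phi}(t,i^0,x) &= \EE\!\left[\int_t^T f^0_{\phi^0}(s, X^{0,N}_s, \mu^N_s)\,ds + g^0(X^{0,N}_T, \mu^N_T)\right],\\
J^0_{\phi^0,\phi}(t,i^0,x) &= \EE\!\left[\int_t^T f^0_{\phi^0}(s, X^0_s, \mu_s)\,ds + g^0(X^0_T, \mu_T)\right]
\end{align*}
(both started at $(i^0,x)$), I would construct the two processes on a common probability space by driving the jumps with a single family of independent unit-rate Poisson processes, one per ordered pair of states, via thinning against upper bounds on $q^0$ and $q$. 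This synchronous coupling makes the two systems differ only through Lipschitz-in-measure discrepancies of their rates. Using the Lipschitz property of $f^0$ and $g^0$ (Hypothesis~\ref{lipassump}), the cost difference is bounded by
\[
C\,\EE\!\left[\int_t^T\!\bigl(\mathbbm{1}\{X^{0,N}_s\neq X^0_s\} + \|\mu^N_s-\mu_s\|\bigr)ds + \mathbbm{1}\{X^{0,N}_T\neq X^0_T\} + \|\mu^N_T-\mu_T\|\right],
\]
so the task reduces to proving $\Psi(s) := \EE[\|\mu^N_s-\mu_s\|^2 + \mathbbm{1}\{X^{0,N}_s\neq X^0_s\}] = O(1/N)$ uniformly in $s\in[t,T]$, after which Jensen's inequality yields the $L^1$ bound at rate $N^{-1/2}$.

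The heart of the argument is this propagation-of-chaos estimate, which I would derive via Dynkin's formula applied to $\|\mu^N_s-\mu_s\|^2$ and to $\mathbbm{1}\{X^{0,N}_s\neq X^0_s\}$ under the coupling. The predictable drift of $\mu^N_s-\mu_s$ splits into a drift-difference term whose norm is controlled by $C(\|\mu^N_s-\mu_s\| + \mathbbm{1}\{X^{0,N}_s\neq X^0_s\})$, using the Lipschitz property of $q_{\phi^0,\phi}$, plus a pure-jump martingale whose predictable quadratic variation on $[t,s]$ is bounded by $K(s-t)/N$: the empirical measure undergoes $O(N)$ jumps of size $1/N$ per unit time, so the bracket scales like $N\cdot N^{-2}=1/N$. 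In parallel, a Dynkin computation for $\mathbbm{1}\{X^{0,N}_s\neq X^0_s\}$ bounds its rate of growth by $C\,\EE[\|\mu^N_s-\mu_s\|]$ plus a re-coupling term, using the Lipschitz property of $q^0_{\phi^0}$. Combining the two estimates yields an integral inequality $\Psi(s)\le\int_t^s\bigl(K_1\Psi(u)+K_2/N\bigr)du$ with $\Psi(t)=0$, and Gronwall's lemma delivers $\Psi(s)\le C/N$.

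The second inequality follows from the same scheme applied to the enlarged coupled process $(X^{0,N}_s, X^N_s, \mu^N_s)$ versus $(X^0_s, X_s, \mu_s)$, the tagged minor-player coordinate entering the Gronwall loop through $\mathbbm{1}\{X^N_s\neq X_s\}$ whose growth rate is Lipschitz-controlled by $\|\mu^N-\mu\|$ and $\mathbbm{1}\{X^{0,N}\neq X^0\}$. The main obstacle is the martingale-bracket bookkeeping in the Dynkin expansion of $\|\mu^N-\mu\|^2$: one must verify that the cross-terms between the martingale part of $\mu^N$ and the drift part do not destroy the $1/N$ scaling. This is precisely where the Lipschitz and boundedness hypotheses on $q$ and $q^0$ (Hypotheses~\ref{lipassump} and \ref{boundassump}), together with Lipschitzness of $\phi^0,\phi,\o\phi$, close the Gronwall loop.
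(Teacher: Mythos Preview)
Your coupling approach is sound in spirit and would yield the result, but it differs substantially from the paper's proof, which is analytic rather than probabilistic. The paper treats $J^{0,N}_{\phi^0,\phi}$ as the solution of a finite ODE system (Proposition~\ref{odemajorpayoffprop}), extends it Lipschitz-continuously from $\cP^N$ to $\RR^{M-1}$, mollifies in $x$ with a kernel of scale $\epsilon$, and shows that the mollified function $v^N_\epsilon$ satisfies the limiting linear PDE~\eqref{majorJpde} up to an error of order $\epsilon + 1/(N\epsilon)$ (Lemma~\ref{molifyest}). Adding and subtracting $L(\epsilon+1/(N\epsilon))(T-t)+L\epsilon$ produces classical super- and subsolutions, and the comparison principle of Proposition~\ref{pdecharacterizepayoff} squeezes $J^0_{\phi^0,\phi}$ between them; the choice $\epsilon=N^{-1/2}$ balances the two error terms. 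This is the Barles--Souganidis strategy for convergence rates of monotone schemes, transplanted to the present setting. Your probabilistic route bypasses all of the PDE machinery and would extend more readily to path-dependent costs, whereas the paper's argument reuses the viscosity framework already built in Section~\ref{se:optimizations} and never constructs a coupling.

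One step in your sketch does not close as written: the claim that $\Psi(s)=\EE[\|\mu^N_s-\mu_s\|^2]+\PP(X^{0,N}_s\neq X^0_s)$ satisfies a \emph{linear} Gronwall inequality and is $O(1/N)$. Under any synchronous coupling the decoupling rate of the major players is governed by $C\,\EE[\|\mu^N_s-\mu_s\|]$, the first moment, not its square; feeding this into $\Psi$ produces a $\sqrt{\Psi}$ term, and in fact $\PP(X^{0,N}_s\neq X^0_s)$ is generically of order $N^{-1/2}$, not $N^{-1}$, so $\Psi=O(1/N)$ is false. The fix is to run Gronwall at the first-moment level from the start: set $\psi(s)=\EE[\|\mu^N_s-\mu_s\|]+\PP(X^{0,N}_s\neq X^0_s)$, bound the martingale contribution to $\mu^N-\mu$ in $L^1$ by $C/\sqrt{N}$ via the bracket estimate you already identified, and obtain $\psi(s)\le\int_t^s K_1\psi(u)\,du + K_2/\sqrt{N}$, which yields $\psi=O(N^{-1/2})$ directly and suffices for the cost bound without any appeal to Jensen.
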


Our proof is based on standard techniques from the convergence rate analysis of numerical schemes for viscosity solutions of PDEs, c.f. \cite{BrianiCamilliZidani} and \cite{BarlesSouganidis} for example. The key step of the proof is the construction of a smooth subsolution and a smooth supersolution of the PDEs (\ref{majorJpde}) and (\ref{minorJpde}) that characterizing $J_{\phi^0,\phi}^0$ and $J_{\phi^0,\phi,\o\phi}^0$ respectively. See Proposition \ref{pdecharacterizepayoff}. We construct these solutions by mollifying an extended version of $J_{\phi^0,\phi}^{0,N}$. Then we derive the bound by using the comparison principle. In the following we detail the proof for the convergence rate of the major player's expected cost. The case of the generic minor player can be dealt with in exactly the same way.

Since $J_{\phi^0,\phi}^{0,N}(t,i,x)$ is only defined for $x\in\mathcal{P}^N$, in order to mollify $J_{\phi^0,\phi}^{0,N}$, we need to first construct an extension of $J_{\phi^0,\phi}^{0,N}$ defined for all $x\in O$ for an open set $O$ containing $\mathcal{P}$. To this end, we consider the following system of ODE:
\begin{align*}
0 =&\; \dot\theta(t,i^0,x) + \tilde f^0(t, i^0, \tilde\phi^0(t, i^0, x), x) + \sum_{j^0, j^0 \neq i^0} (\theta(t, j^0, x) - \theta(t, i^0, x))   \tilde q^0(t, i^0, j^0, \tilde\phi^0(t, i^0, x), x)\\
&+ \sum_{(i,j), j\neq i}(\theta(t, i^0, x + \frac{1}{N}e_{ij} ) -\theta(t, i^0, x )) N \max\{x_i, 0\}  \tilde q(t, i, j, \tilde\phi(t, i, i^0, x), i^0, \tilde\phi^0(t,i^0,x), x)\\
0 =&\; \theta(T,i^0,x) - \tilde g^0(i^0, x)
\end{align*}
Here $\tilde\phi^0$, $\tilde\phi$, $\tilde f^0$, $\tilde g^0$ and $\tilde q^0$ are respectively extensions of $\phi^0$, $\phi$, $f^0$, $g^0$ and $q^0$ from $x\in\mathcal{P}$ to $x\in\mathbb{R}^{N-1}$, which are Lipschitz in $x$. The following is proved using the same arguments as for Proposition \ref{odemajorpayoffprop}.

\begin{lemma}\label{propertyextension}
The system \eqref{odemajorpayoff} of ODEs admits a unique solution $v^N$ defined in $[0,T]\times \{1,\dots,M^0\}\times \mathbb{R}^{M-1}$. Moreover we have:

\noindent (i) $v^N(t,i^0,x)$ is Lipschitz in $x$ uniformly in $t$ and $i^0$ and the Lipschitz constant only depends on $T$ and the Lipschitz constants of $\phi^0$, $\phi$,$f^0$, $g^0$, $q^0$.

\noindent (ii) $v^N(t,i^0,x) = J_{\phi^0,\phi}^{0,N}(t,i^0,x)$ for all $x \in \mathcal{P}^N$.
\end{lemma}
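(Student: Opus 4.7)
The plan is to address the three claims in order---existence and uniqueness of $v^N$, part (ii), then the Lipschitz estimate (i)---since the last two both rely on the construction produced in the first step.

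\textbf{Existence and uniqueness.} I would exploit the fact that the extended ODE decouples across orbits of the shift lattice: for each starting point $x_0\in\mathbb{R}^{M-1}$, only the values $\theta(t,i^0,y)$ with $y\in x_0+\tfrac{1}{N}\mathbb{Z}^{M-1}$ are coupled through the jump terms. On a single orbit, I would recast the backward Cauchy problem in the Banach space $B=\ell^\infty(\{1,\dots,M^0\}\times(x_0+\tfrac{1}{N}\mathbb{Z}^{M-1}))$ equipped with the sup norm: the ODE then reads $\partial_t\theta_t=-\mathcal{F}_t(\theta_t)$ with $\mathcal{F}_t:B\to B$ affine. Boundedness of $\tilde q^0$, of $\max\{x_i,0\}\tilde q$ and of $\tilde f^0$, combined with the unit $B$-norm of the shift operators $\theta(\cdot)\mapsto\theta(\cdot+\tfrac{1}{N}e_{ij})$ (which absorbs the prefactor $N$), yields a uniform-in-$t$ bound on the Lipschitz constant of $\mathcal{F}_t$. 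The Cauchy-Lipschitz theorem on $B$ then produces the unique $C^1([0,T];B)$ solution with terminal value $\tilde g^0$. Gluing over the pairwise disjoint orbits delivers the unique solution on $[0,T]\times\{1,\dots,M^0\}\times\mathbb{R}^{M-1}$.

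\textbf{Part (ii).} Whenever $x\in\mathcal{P}^N$, the tilde extensions coincide with their originals and $\max\{x_i,0\}=x_i$, so the extended ODE reduces to \eqref{odemajorpayoff}. Moreover any shift $x+\tfrac{1}{N}e_{ij}$ carrying a nonzero weight $Nx_i$ stays in $\mathcal{P}^N$: a short case analysis on $i,j<M$, $i=M$ and $j=M$ using $e_{ij}=e_j\mathbbm{1}_{j\neq M}-e_i\mathbbm{1}_{i\neq M}$ and the quantization $x_i\in\tfrac{1}{N}\mathbb{Z}_{\geq 0}$ confirms this. Hence $v^N|_{\mathcal{P}^N}$ satisfies \eqref{odemajorpayoff} with terminal value $g^0$, and the uniqueness part of Proposition \ref{odemajorpayoffprop} identifies it with $J^{0,N}_{\phi^0,\phi}$.

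\textbf{Lipschitz estimate.} This is the main obstacle. I would run a Gronwall argument on finite differences. For $h\in\mathbb{R}^{M-1}$, set $\Delta_h v^N(t,i^0,x):=v^N(t,i^0,x+h)-v^N(t,i^0,x)$. Subtracting the ODE at $x$ from the one at $x+h$ yields a linear backward equation $\partial_t(\Delta_h v^N)=-\mathcal{L}_t(\Delta_h v^N)-R^h_t$, whose terminal value is bounded by the Lipschitz constant of $\tilde g^0$. The homogeneous operator $\mathcal{L}_t$ only combines bounded multiplication and unit-norm shift operators, so it has $N$-independent operator norm on $B$. The remainder $R^h_t$ collects residues from the Lipschitz dependence on $x$ of $\tilde f^0,\tilde\phi^0,\tilde\phi,\tilde q^0,\tilde q$ and of $x\mapsto\max\{x_i,0\}$. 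The subtle residue is the term $(v^N(t,i^0,x+\tfrac{1}{N}e_{ij})-v^N(t,i^0,x))(Q(x+h)-Q(x))$ with $Q=N\max\{(\cdot)_i,0\}\tilde q$ so that $|Q(x+h)-Q(x)|=O(N\|h\|)$; to handle it I would first establish the auxiliary $N$-independent estimate $|v^N(t,i^0,x+\tfrac{1}{N}e_{ij})-v^N(t,i^0,x)|\le L/N$ on the extended system, by mimicking the proof of Proposition \ref{majorNproperty} (which uses only the bounds and Lipschitz constants of the extended data). With this preliminary bound, the subtle term contributes $O(\|h\|)$, the full $R^h_t$ is $O(\|h\|)$ uniformly in $N$, and Gronwall in $B$ delivers $\|\Delta_h v^N(t,\cdot,\cdot)\|_\infty\le C\|h\|$ for a constant $C$ depending only on $T$ and the Lipschitz constants of $\phi^0,\phi,f^0,g^0,q^0$, as claimed.
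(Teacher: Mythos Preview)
Your argument contains a recurring error: the claim that the linear parts of the ODE operators have $N$-independent $\ell^\infty$ operator norm is false. In the jump term
\[
N\max\{x_i,0\}\,\tilde q(\ldots)\bigl(\theta(\cdot+\tfrac1N e_{ij})-\theta(\cdot)\bigr),
\]
the shift operator indeed has unit sup-norm, but multiplication by $N\max\{x_i,0\}\tilde q$ contributes a factor of order $N$ on any bounded region, and is in fact unbounded on the full orbit $x_0+\tfrac1N\mathbb{Z}^{M-1}$ (so your Cauchy--Lipschitz step is already problematic there; one should work on a bounded neighbourhood of $\mathcal{P}$, which is all that is needed for the mollification). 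Nothing in the shift ``absorbs'' this $N$: for a generic $\theta\in\ell^\infty$ the difference $\theta(\cdot+\tfrac1N e_{ij})-\theta(\cdot)$ has sup-norm up to $2\|\theta\|_\infty$, not $O(1/N)\|\theta\|_\infty$. Consequently the Gronwall step on $\Delta_h v^N$, run with the operator norm of $\mathcal{L}_t$, yields a bound of order $e^{CNT}\|h\|$, which is useless.

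The paper's route---and the fix you need---is to replace the naive operator-norm Gronwall by the maximum-principle estimate of Lemma~\ref{odebound}. The linear operator $\mathcal{L}_t$ acting on $\Delta_h v^N$ has the Q-matrix structure (nonnegative off-diagonal rates, zero row sums) inherited from $\tilde q^0$ and $N\max\{x_i,0\}\tilde q$, provided the extensions are chosen to remain nonnegative off the diagonal. For such operators Lemma~\ref{odebound} gives
\[
\|\Delta_h v^N(t)\|_\infty \le \|\Delta_h v^N(T)\|_\infty + \int_t^T \|R^h_s\|_\infty\,ds
\]
\emph{without} any exponential in the size of the rates. Combined with your (correct) handling of the residue $R^h_t=O(\|h\|)$---which uses the auxiliary bound $|v^N(\cdot,\cdot,x+\tfrac1N e_{ij})-v^N(\cdot,\cdot,x)|\le L/N$ obtained by the very same maximum-principle argument---this delivers the $N$-independent Lipschitz constant. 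This is exactly how the paper proves Proposition~\ref{majorNproperty}; you invoked that machinery for the auxiliary estimate but abandoned it precisely where it is indispensable.
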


To construct smooth super and sub solutions, we use a family of mollifiers $\rho_\epsilon$ defined by $\rho_\epsilon(x) := \rho(x/\epsilon)/\epsilon^{N-1}$, where $\rho$ is a smooth and positive function with compact support in the unit ball of $\mathbb{R}^{M-1}$ and satisfying $\int_{R^{M-1}} \rho(x) dx = 1$. For $\epsilon>0$, we define $v^{N}_\epsilon$ as the mollification of $v^N$ on $[0,T]\times\{1,\dots, M^0\}\times \mathcal{P}$:
\[
v^{N}_\epsilon(t,i^0,x) := \int_{y\in\mathbb{R}^{M-1}} v^N(t,i^0,x-y)\rho_\epsilon(y) dy.
\]
Using the Lipschitz property of $\phi^0$, $\phi$, $f^0$, $g^0$, $q^0$ and straightforward estimates on the mollifier $\rho_\epsilon$, we obtain the following properties on $v^{N}_\epsilon$.
\begin{lemma}\label{molifyest}
$v^{N}_\epsilon$ is $C^{\infty}$ in $x$ and $C^1$ in $t$. Moreover, there exists a constant $C$ that depends only on $T$ and the Lipschitz constants of $\phi^0$, $\phi$, $f^0$, $g^0$ and $q^0$ such that for all $i^0=1,\dots, M^0$, $i=1,\dots, M$, $t\le T$ and $x,y\in\mathcal{P}$, the following estimations hold:
\begin{align}
L\epsilon \ge&\;| [\mathcal{G}_{\phi^0,\phi}^{0, N}v^{N}_\epsilon](t,i^0,x)+  f^0(t, i^0,\phi^0(t,i^0,x),x)|\label{molifyest1}\\
L\epsilon \ge&\; |v^{N}_\epsilon(T,i^0,x) - g^0(i^0, x)|\label{molifyest2}\\
\frac{L}{\epsilon}\|x-y\| \ge&\; |\partial_{x_i}v^{N}_\epsilon(t,i^0,x) - \partial_{x_i}v^{N}_\epsilon(t,i^0, y)|. \label{molifyest3}
\end{align}
\end{lemma}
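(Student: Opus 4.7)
The plan is to use the fact that, by Lemma \ref{propertyextension}, the extension $v^N$ satisfies on $[0,T]\times\{1,\dots,M^0\}\times\mathbb{R}^{M-1}$ the ODE
\[
0 = \partial_t v^N + [\mathcal{G}^{0,N}_{\phi^0,\phi} v^N] + \tilde f^0_{\tilde\phi^0},\qquad v^N(T,i^0,x)=\tilde g^0(i^0,x),
\]
where by an abuse of notation $\mathcal{G}^{0,N}_{\phi^0,\phi}$ denotes the extended generator (with rate $N\max\{x_i,0\}$ in place of $Nx_i$). From there, I would quantify how convolution in $x$ against $\rho_\epsilon$ interacts with this operator, whose coefficients depend on $x$.

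I would first dispose of the two easier estimates. Estimate \eqref{molifyest3} will follow from the standard mollifier smoothing bound: Lemma \ref{propertyextension}(i) gives a Lipschitz constant $L_0$ for $v^N(t,i^0,\cdot)$ that is independent of $N$, and differentiation under the convolution yields $\partial_{x_i}v^N_\epsilon = v^N * \partial_{x_i}\rho_\epsilon$, so
\[
|\partial_{x_i}v^N_\epsilon(t,i^0,x) - \partial_{x_i}v^N_\epsilon(t,i^0,y)| \le L_0 \int |\partial_{x_i}\rho_\epsilon(x-z) - \partial_{x_i}\rho_\epsilon(y-z)|\,dz,
\]
and the scaling $\|\nabla\partial_{x_i}\rho_\epsilon\|_{L^1}\le C/\epsilon$ closes the bound. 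Estimate \eqref{molifyest2} will follow at once from $v^N(T,i^0,\cdot)=\tilde g^0(i^0,\cdot)$, Lipschitz continuity of $\tilde g^0$, and $\tilde g^0 = g^0$ on $\mathcal{P}$; the $\epsilon$ enters through the radius of $\mathrm{supp}(\rho_\epsilon)$.

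For the main step \eqref{molifyest1}, I would convolve the displayed ODE in the $x$ variable against $\rho_\epsilon$. Since $\partial_t$ commutes with this convolution, one obtains
\[
0 = \partial_t v^N_\epsilon(t,i^0,x) + \{[\mathcal{G}^{0,N}_{\phi^0,\phi}v^N] * \rho_\epsilon\}(t,i^0,x) + \{\tilde f^0_{\tilde\phi^0}*\rho_\epsilon\}(t,i^0,x).
\]
It then suffices to bound by $L\epsilon$ the two error terms
\[
A := [\mathcal{G}^{0,N}_{\phi^0,\phi} v^N_\epsilon](t,i^0,x) - \{[\mathcal{G}^{0,N}_{\phi^0,\phi}v^N]*\rho_\epsilon\}(t,i^0,x), \qquad B := f^0_{\phi^0}(t,i^0,x) - \{\tilde f^0_{\tilde\phi^0}*\rho_\epsilon\}(t,i^0,x);
\]
the bound $|B|\le L\epsilon$ is immediate from Lipschitz continuity of $f^0$ and $\phi^0$ (and of their extensions, which agree on $\mathcal{P}$) together with the support of $\rho_\epsilon$.

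The main obstacle is to bound $|A|\le L\epsilon$ \emph{with a constant independent of $N$}, since the minor-player term of $\mathcal{G}^{0,N}_{\phi^0,\phi}$ involves rates of order $N$. I would split $A$ according to the major- and minor-player jumps. The major-player piece reduces to an integral of $[v^N(t,j^0,x-y)-v^N(t,i^0,x-y)][q^0_{\phi^0}(t,i^0,j^0,x)-q^0_{\phi^0}(t,i^0,j^0,x-y)]$ against $\rho_\epsilon(y)$, which is $O(\epsilon)$ since $v^N$ is bounded and $q^0_{\phi^0}$ is Lipschitz in $x$. The minor-player piece becomes the integral against $\rho_\epsilon(y)$ of
\[
N\bigl[v^N(t,i^0,x-y+\tfrac{1}{N}e_{ij}) - v^N(t,i^0,x-y)\bigr] \cdot \bigl[x_i\, q_{\phi^0,\phi}(\cdot,x) - \max\{(x-y)_i,0\}\, q_{\phi^0,\phi}(\cdot,x-y)\bigr];
\]
the first factor is bounded by $L_0$ \emph{uniformly in $N$} (this is the crucial use of Lemma \ref{propertyextension}(i), since that Lipschitz constant is $N$-independent), while the second factor is $O(\epsilon)$ by Lipschitz continuity and the support of $\rho_\epsilon$. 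Summing the two contributions yields $|A|\le L\epsilon$ with $L$ independent of $N$, completing the plan.
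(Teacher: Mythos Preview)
Your plan is correct and is precisely the argument the paper has in mind: the authors do not give a proof, stating only that the lemma follows from ``the Lipschitz property of $\phi^0$, $\phi$, $f^0$, $g^0$, $q^0$ and straightforward estimates on the mollifier $\rho_\epsilon$.'' Your decomposition of \eqref{molifyest1} into the commutator $A$ and the running-cost error $B$, together with the crucial observation that the $N$-independent Lipschitz bound of Lemma \ref{propertyextension}(i) controls the factor $N[v^N(\cdot+\tfrac{1}{N}e_{ij})-v^N(\cdot)]$ uniformly in $N$, is exactly the point of the construction.

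One small slip to clean up in your treatment of \eqref{molifyest3}: the displayed inequality
\[
|\partial_{x_i}v^N_\epsilon(t,i^0,x) - \partial_{x_i}v^N_\epsilon(t,i^0,y)| \le L_0 \int |\partial_{x_i}\rho_\epsilon(x-z) - \partial_{x_i}\rho_\epsilon(y-z)|\,dz
\]
does not follow as written (it would require $\|v^N\|_\infty\le L_0$, not the Lipschitz constant), and the quantity $\|\nabla\partial_{x_i}\rho_\epsilon\|_{L^1}$ scales like $\epsilon^{-2}$, not $\epsilon^{-1}$. The clean route is to change variables first:
\[
\partial_{x_i}v^N_\epsilon(t,i^0,x) - \partial_{x_i}v^N_\epsilon(t,i^0,y) = \int \bigl[v^N(t,i^0,x-w)-v^N(t,i^0,y-w)\bigr]\,(\partial_{x_i}\rho_\epsilon)(w)\,dw,
\]
and then use $|v^N(t,i^0,x-w)-v^N(t,i^0,y-w)|\le L_0\|x-y\|$ together with $\|\partial_{x_i}\rho_\epsilon\|_{L^1}\le C/\epsilon$. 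This yields exactly the claimed $L\|x-y\|/\epsilon$.
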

We are now ready to prove Theorem \ref{propachaos}. We construct viscosity super and sub solutions by adjusting $v^{N}_\epsilon$ with a linear function on $t$. Then the comparison principle allows us to conclude.

\begin{proof}(of Theorem \ref{propachaos})
We denote by $L$ a generic constant that only depends on $T$ and the Lipschitz constants of $\phi^0$, $\phi$,$f^0$, $g^0$ and $q^0$. Using (\ref{molifyest1}), (\ref{molifyest2}) and (\ref{molifyest3}) in Lemma \ref{molifyest}, we obtain:
\begin{align*}
L(\epsilon + \frac{1}{N\epsilon})\ge\;&|[\mathcal{G}_{\phi^0,\phi}^{0}v^{N}_\epsilon](t,i^0,x)+  f^0(t, i^0,\phi^0(t,i^0,x),x)|\\
L\epsilon\ge\;&|v^{N}_\epsilon(T,i^0, x) - g^0(i^0, x)|.
\end{align*}
Next we define:
\[
v_{\pm}^{N}(t,i^0,x) :=v^{N}_\epsilon(t,i^0,x) \pm [ L(\epsilon + \frac{1}{N\epsilon})(T-t)+L\epsilon].
\]
Since $v_{-}^{N}$ and $v_{+}^{N}$ are smooth, the above estimation immediately implies that $v_{-}^{N}$ and $v_{+}^{N}$ are viscosity sub and super solutions of the PDE (\ref{majorJpde}) respectively. By Proposition \ref{pdecharacterizepayoff}, $J^0_{\phi^0,\phi}$ is a continuous viscosity solution to the PDE (\ref{majorJpde}). Then by the comparison principle we have $v_{-}^{N}\le J^0_{\phi^0,\phi} \le v_{+}^{N}$, which implies:
\[
|v^{N}_\epsilon(t,i^0,x) - J^0_{\phi^0,\phi}(t,i^0,x)| \le L(\epsilon + \frac{1}{N\epsilon})(T-t)+L\epsilon
\]
Now using the property of the mollifier and Lemma \ref{propertyextension}, we have for all $t\le T$, $i^0=1,\dots,M^0$ and $x\in\mathcal{P}^N$:
\[
|v^{N}_\epsilon(t,i^0,x) - J^{0,N}_{\phi^0,\phi}(t,i^0,x)| = |v^{N}_\epsilon(t,i^0,x) - v^{N}(t,i^0,x)|\le L\epsilon.
\]
The desired results follow by combining the above inequalities and choosing $\epsilon = 1/\sqrt{N}$.
\end{proof}

\subsection{Approximative Nash Equilibria}
The following is  an immediate consequence of the above propagation of chaos results.

\begin{theorem}
Assume that the Mean Field Game attains Nash equilibrium when the major player chooses a Lipschitz strategy $\hat\alpha$ and all the minor players choose a Lipschitz strategy $\hat\beta$. Denote $L_0$ the Lipschitz constant for $\hat\alpha$ and $\hat\beta$. Then for any $L\ge L_0$, $(\hat\alpha,\hat\beta)$ is an approximative Nash equilibrium within all the $L-$Lipschitz strategies. More specifically, there exist constants $C>0$ and $N_0\in\mathbb{N}$ depending on $L_0$ such that for all $N\ge N_0$, all strategy $\alpha$ for major player and $\beta$ for minor player that are $L$-Lipschitz, we have
\begin{align*}
J^{0,N}(\hat\alpha, \hat\beta,\dots, \hat\beta) \ge\;\;& J^{0,N}(\alpha, \hat\beta,\dots, \hat\beta) - C/\sqrt{N}\\
J^{N}(\hat\alpha, \hat\beta,\dots, \hat\beta) \ge\;\;& J^{N}(\hat\alpha, \hat\beta,\dots, \beta, \dots \hat\beta)- C/\sqrt{N}
\end{align*}
\end{theorem}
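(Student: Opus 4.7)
The plan is to chain the defining Nash inequalities for $(\hat\alpha,\hat\beta)$ in the mean field game with the quantitative propagation of chaos bounds of Theorem~\ref{propachaos}. Write $\hat\alpha=\hat\phi^0$ and $\hat\beta=\hat\phi$ for the Lipschitz feedback functions realizing the MFG Nash equilibrium, with common Lipschitz constant $L_0$, and fix any $L\ge L_0$.

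For the major player's deviation, fix any $L$-Lipschitz feedback $\phi^0$ corresponding to a candidate $\alpha$. Applying Theorem~\ref{propachaos} first to the pair $(\hat\phi^0,\hat\phi)$ and then to $(\phi^0,\hat\phi)$ -- both pairs of feedbacks being $L$-Lipschitz, so the bounding constant $K$ can be taken uniform in the choice of $\phi^0$ -- and inserting the MFG Nash inequality $J^{0}_{\hat\phi^0,\hat\phi}\le J^{0}_{\phi^0,\hat\phi}$ between these two estimates yields
\[
J^{0,N}_{\hat\phi^0,\hat\phi} \;\le\; J^{0}_{\hat\phi^0,\hat\phi}+\frac{K}{\sqrt{N}} \;\le\; J^{0}_{\phi^0,\hat\phi}+\frac{K}{\sqrt{N}} \;\le\; J^{0,N}_{\phi^0,\hat\phi}+\frac{2K}{\sqrt{N}},
\]
evaluated at the initial data. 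This is the approximate Nash property for the major player, with slack $2K/\sqrt{N}$. By symmetry of the model across the minor players, the identical argument applied to the triple $(\hat\phi^0,\hat\phi,\bar\phi)$ -- where $\bar\phi$ is the $L$-Lipschitz deviation of one singled-out minor player while the other $N-1$ minors still use $\hat\phi$ -- together with the MFG inequality $J_{\hat\phi^0,\hat\phi,\hat\phi}\le J_{\hat\phi^0,\hat\phi,\bar\phi}$, produces the analogous $2K/\sqrt{N}$ bound for the minor-player deviation.

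The main obstacle is that the constant in Theorem~\ref{propachaos} is advertised as depending on the Lipschitz constants of the feedback functions involved, so one must verify that this dependence is controlled uniformly over the $L$-Lipschitz class of admissible deviations. Tracing the proof of Theorem~\ref{propachaos} back through Propositions~\ref{majorNproperty}, \ref{minorNproperty}, \ref{majorvalueodeproperty} and \ref{minorvalueodeproperty}, the dependence on the feedback Lipschitz constants is continuous (in fact exponential in $L$ times a polynomial in $T$), and hence is bounded whenever these constants are bounded by $L$. This uniformity yields a single constant $C=C(L_0,L,T)$ and a threshold $N_0$ depending only on $L_0$, $L$, $T$ and the intrinsic Lipschitz constants and bounds of $f,f^0,g,g^0,q,q^0$ under which both approximate Nash inequalities with slack $C/\sqrt{N}$ hold, which completes the argument.
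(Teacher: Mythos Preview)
Your proposal is correct and is precisely the argument the paper has in mind: the paper offers no detailed proof, stating only that the result is ``an immediate consequence of the above propagation of chaos results,'' and your chaining of the MFG Nash inequalities between two applications of Theorem~\ref{propachaos} is exactly that consequence. Your explicit remark that the constant in Theorem~\ref{propachaos} depends on the feedback Lipschitz constants only through an upper bound $L$, and is therefore uniform over the class of $L$-Lipschitz deviations, is the one point worth spelling out, and you have done so.
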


\section{Appendix}
\label{se:appendix}

\subsection{Proof of Lemma \ref{dynamicprogoperator}}
Recall the dynamic programming operator defined in (\ref{dynamicprogoperatordef}). Let $\Phi$ be a mapping on $[0,T]\times\{1,\dots,M^0\}\times\mathcal{P}$ such that $\Phi(\cdot, i^0, \cdot)$ is $\mathcal{C}^1$ in $[0,T]\times\mathcal{P}$ and $\Phi(\cdot, j^0, \cdot)$ is continuous in $[0,T]\times\mathcal{P}$ for $j^0\neq i^0$. We are going to evaluate the following limit:
\[
\lim_{h\rightarrow 0}\frac{1}{h}\left\{[\mathcal{T}_{t,t+h} \Phi(t+h, \cdot,\cdot)](i^0, x) - \Phi(t,i^0,x)\right\}:=\lim_{h\rightarrow 0}I_h
\]
(i) Let us first assume that $\Phi(\cdot, j^0, \cdot)$ is $\mathcal{C}^1$ in $[0,T]\times\mathcal{P}$ for all $j^0$. Consider a constant control $\alpha^0$, then by definition of the operator we have:
\[
I_h\le \frac{1}{h}\left\{\mathbb{E}\left[\int_{t}^{t+h}f^0(u, X_u^0, \alpha^0, \mu_u)du + \Phi(t+h, X_{t+h}^0, \mu_{t+h})|X_t^0=i^0, \mu = x \right] - \Phi(t, i^0, x)\right\}
\]
Using the infinitesimal generator of the process $(X_u^0, \mu_u)$, the RHS of the above inequality has a limit. Take the limit and take the infimum over $\alpha$, we obtain:
\begin{align*}
\limsup_{h\rightarrow 0} I_h\le&\;\partial_t \Phi(t,i^0, x) + \inf_{\alpha^0\in A^0}\bigg\{f^0(t, i^0, \alpha^0, x) + (1-\sum_{k=1}^{M-1} x_k)  \sum_{k=1}^{M-1} \partial_{x_k} \Phi(t,i^0, x)  q(t,M, k, \phi(t,M, i^0, x), i^0, \alpha^0, x)\\
&+\;\sum_{i,j=1}^{M-1} \partial_{x_j} \Phi(t,i^0, x)  x_i  q(t,i,j,\phi(t,i, i^0,x), i^0, \alpha^0, x)+ \sum_{j^0\neq i^0} [\Phi(t,j^0, x) - \Phi(t,i^0, x)] q^0(i^0, j^0, \alpha^0, x)\bigg\}
\end{align*}
On the other hand, for all $h>0$, there exists a control $\phi^{0}_h$ such that
\[
\mathbb{E}\left[\int_{t}^{t+h}f^0(u, X_u^0, \phi_h^0(u, X_u^0, \mu_u), \mu_u)du + \Phi(t+h, X_{t+h}^0, \mu_{t+h})|X_t^0=i^0, \mu = x \right] \le \mathcal{T}_{t,t+h} \Phi(t+h, \cdot,\cdot))(i^0, x) + h^2
\]
This implies:
\[
I_h \ge  \frac{1}{h}\left\{\mathbb{E}\left[\int_{t}^{t+h}f^0(u, X_u^0, \phi^0_h(u, X_u^0, \mu_u), \mu_u)du + \Phi(t+h, X_{t+h}^0, \mu_{t+h})|X_t^0=i^0, \mu = x \right] - \Phi(t, i^0, x)\right\} - h
\]
Since $\Phi(\cdot, j^0, \cdot)$ is $\mathcal{C}^1$ for all $j^0$, this can be further written using the infinitesimal generator:
\begin{align*}
I_h \ge\;\;&\;  \frac{1}{h}\mathbb{E}\left[\int_{t}^{t+h}f^0(u, X_u^0, \alpha^h(u, X_u^0, \mu_u), \mu_u) + [\mathcal{G}^0_{\phi^0_h, \phi}\Phi](u, X_u^0, \mu_u) du|X_t^0=i^0, \mu = x \right] -h
\end{align*}
Taking supremum over the control and applying Dominated Convergence Theorem, we obtain:
\begin{align*}
\;\liminf_{h\rightarrow 0} I_h\ge&\;\partial_t \Phi(t,i^0, x) + \inf_{\alpha^0\in A^0}\bigg\{f^0(t, i^0, \alpha^0, x) + (1-\sum_{k=1}^{M-1} x_k)  \sum_{k=1}^{M-1} \partial_{x_k} \Phi(t,i^0, x)  q(t,M, k, \phi(t,M, i^0, x), i^0, \alpha^0, x)\\
&+\;\sum_{i,j=1}^{M-1} \partial_{x_j} \Phi(t,i^0, x)  x_i  q(t,i,j,\phi(t,i, i^0,x), i^0, \alpha^0, x)+ \sum_{j^0\neq i^0} [\Phi(t,j^0, x) - \Phi(t,i^0, x)] q^0(i^0, j^0, \alpha^0, x)\bigg\}
\end{align*}
This proves the lemma for $\Phi$ such that $\Phi(\cdot, j^0, \cdot)$ is $\mathcal{C}^1$ for all $j^0$.

\vspace{3mm}
\noindent(ii) Now take a continuous mapping $\Phi$ and only assume that $\Phi(\cdot, i^0, \cdot)$ is $\mathcal{C}^1$. Applying Weierstrass approximation theorem, for any $\epsilon>0$, there exists $\mathcal{C}^1$ function $\phi^{\epsilon}_{j^0}$ on $[0,T]\times\mathcal{P}$ for all $j^0\neq i^0$ such that
\[
\sup_{(t,x)\in[0,T]\times\mathcal{P}} | \Phi(t,j^0,x) - \phi^{\epsilon}_{j^0}(t,x)| \le \epsilon
\]
Define $\Phi^{\epsilon}(t,j^0,x):=\phi^{\epsilon}_{j^0}(t,x) + \epsilon$ for $j^0\neq i^0$ and $\Phi^{\epsilon}(t,i^0,x):= \Phi(t,i^0,x)$. Then we have $\Phi^{\epsilon} \ge \Phi$. By the monotonicity of the operator $\mathcal{T}_{t,t+h}$ we have:
\[
\frac{1}{h}\left\{[\mathcal{T}_{t,t+h} \Phi^\epsilon(t+h, \cdot,\cdot)](i^0, x) - \Phi^\epsilon(t,i^0,x)\right\}\ge\frac{1}{h}\left\{[\mathcal{T}_{t,t+h} \Phi(t+h, \cdot,\cdot)](i^0, x) - \Phi(t,i^0,x)\right\} :=I_h
\]
Now apply the results from step (i), we obtain
{\allowdisplaybreaks
\begin{align*}
&\;\limsup_{h\rightarrow 0}I_h \\
\le&\; \partial_t \Phi^\epsilon(t,i^0, x) + \inf_{\alpha^0\in A^0}\bigg\{f^0(t, i^0, \alpha^0, x) + (1-\sum_{k=1}^{M-1} x_k)  \sum_{k=1}^{M-1} \partial_{x_k} \Phi^\epsilon(t,i^0, x)  q(t,M, k, \phi(t,M, i^0, x), i^0, \alpha^0, x)\\
&+\;\sum_{i,j=1}^{M-1} \partial_{x_j} \Phi^\epsilon(t,i^0, x)  x_i  q(t,i,j,\phi(t,i, i^0,x), i^0, \alpha^0, x)+ \sum_{j^0\neq i^0} [\Phi^\epsilon(t,j^0, x) - \Phi^\epsilon(t,i^0, x)] q^0(t,i^0, j^0, \alpha^0, x)\bigg\}\\
=&\; \partial_t \Phi(t,i^0, x) + \inf_{\alpha^0\in A^0}\bigg\{f^0(t, i^0, \alpha^0, x) + (1-\sum_{k=1}^{M-1} x_k)  \sum_{k=1}^{M-1} \partial_{x_k} \Phi(t,i^0, x)  q(t,M, k, \phi(t,M, i^0, x), i^0, \alpha^0, x)\\
&+\;\sum_{i,j=1}^{M-1} \partial_{x_j} \Phi(t,i^0, x)  x_i  q(t,i,j,\phi(t,i, i^0,x), i^0, \alpha^0, x)\\
&+\;\sum_{j^0\neq i^0} [\Phi^\epsilon(t,j^0, x) - \Phi(t,j^0, x) + \Phi(t,j^0, x) - \Phi(t,i^0, x)] q^0(t,i^0, j^0, \alpha^0, x)\bigg\}\\
\le&\; \partial_t \Phi(t,i^0, x) + \inf_{\alpha^0\in A^0}\bigg\{f^0(t, i^0, \alpha^0, x) + (1-\sum_{k=1}^{M-1} x_k)  \sum_{k=1}^{M-1} \partial_{x_k} \Phi(t,i^0, x)  q(t,M, k, \phi(t,M, i^0, x), i^0, \alpha^0, x)\\
&+\;\sum_{i,j=1}^{M-1} \partial_{x_j} \Phi(t,i^0, x)  x_i  q(t,i,j,\phi(t,i, i^0,x), i^0, \alpha^0, x)+ \sum_{j^0\neq i^0} [\Phi(t,j^0, x) - \Phi(t,i^0, x)] q^0(t,i^0, j^0, \alpha^0, x)\bigg\} + \epsilon  L
\end{align*}
}
The last equality is due to the fact that $q^0$ is bounded and $|\Phi^\epsilon(t,j^0, x) - \Phi(t,j^0, x)|\le \epsilon$. We can write a similar inequality for $\liminf_{h\rightarrow 0}I_h$. Then tending $\epsilon$ to $0$ yields the desired result.

\subsection{Proof of Theorem \ref{compmajor}}
In this section we present the proof of the comparison principle for HJB equation associated with major player's optimization problem. The arguments used in this proof can be readily applied to prove uniqueness of solution to minor player's HJB equation (\ref{hjbminor}) (c.f. Theorem \ref{hjbtheominor}). The same argument can also be used to prove the uniqueness result for equations (\ref{majorJpde}) and (\ref{minorJpde}) (c.f. Proposition \ref{pdecharacterizepayoff}).

Let $v$ and $w$ be respectively viscosity subsolution and supersolution to equation (\ref{hjbmajor}). Our objective is to show $v(t, i, x) \le w(t,i,x)$ for all $1\le i\le M^0$, $x\in\mathcal{P}$ and $t\in[0,T]$.

\vspace{3mm}
\noindent (i) Without loss of generality, we may assume that $v$ is a viscosity subsolution of:
\begin{equation}\label{hjbmajoreta}
\begin{aligned}
0=\;\;&\; -\eta + \partial_t v^0(t, i^0, x) + \inf_{\alpha^0 \in A^0}\bigg\{f^0(t, i^0, \alpha^0, x) + \sum_{j^0\neq i^0} [v^0(t,j^0, x) - v^0(t,i^0, x)] q^0(t,i^0, j^0, \alpha^0, x)\\
&\;+(1-\sum_{k=1}^{M-1} x_k)  \sum_{k=1}^{M-1} \partial_{x_k} v^0(t,i^0,x)  q(t,M, k, \phi(t,M, i^0, x), i^0, \alpha^0, x)\\
&\;+\sum_{i,j=1}^{M-1} \partial_{x_j} v^0(t, i^0, x)  x_i  q(t,i,j,\phi(t,i,i^0,x), i^0,\alpha^0, x)\bigg\},\;\;\;\forall (i_0,t,x)\in \{1,\dots, M^0\} \times [0,T[ \times \mathcal{P}\\
0=\;\;&v^0(T,i^0, x) - g^0(i^0, x),\;\;\;\forall x\in \mathcal{P}\\
\end{aligned}
\end{equation}
where $\eta>0$ is a small parameter. Indeed we may consider the function $v_{\eta}(t,i,x):= v(t,i,x) - \eta(T-t)$. Then it is easy to see that $v_\eta$ is a viscosity subsolution to the above equation. If we can prove $v_\eta \le w$, the tending $\eta$ to $0$ yields $v \le w$. In the following, we will only consider the subsolution $v$ to equation (\ref{hjbmajoreta}) and the supersolution $w$ to the equation (\ref{hjbmajor}), and try to prove $v\le w$.

\vspace{3mm}
\noindent(ii) For $\epsilon>0$ and $1\le i^0 \le M^0$, consider the function $\Gamma_{i^0,\epsilon}$ defined on $[0,T]^2\times \mathcal{P}^2$:
\[
\Gamma_{i^0,\epsilon}(t,s,x,y) := v(t,i^0,x) - w(s,i^0,y) - \frac{1}{\epsilon}|t-s|^2 - \frac{1}{\epsilon}\|x-y\|^2
\]
where $\|\cdot\|$ is the euclidian norm on $\mathbb{R}^{(M^0 - 1)}$. Since $\Gamma_{i^0,\epsilon}$ is a continuous function on a compact set, it attains the maximum denote as $N_{i^0,\epsilon}$. Denote $(\bar t, \bar s, \bar x, \bar y)$ the maximizer (which obviously depends on $\epsilon$ and $i^0$, but for simplicity of the notation we suppress the notation). We show that for all $1\le i^0 \le M^0$, there exists a sequence $\epsilon_n \rightarrow 0$ and the corresponding maximizer $(\bar t_n,\bar s_n,\bar x_n, \bar y_n)$ such that
\begin{subequations}
\begin{equation}\label{seq1}
 (\bar t_n,\bar s_n,\bar x_n, \bar y_n) \rightarrow (\hat t, \hat t, \hat x, \hat x), \text{where}\;\;
(\hat t,\hat x) := \arg\sup_{(t,x)\in[0,T]\times\mathcal{P}}\{v(t,i^0,x)- w(t,i^0,x)\}
\end{equation}
\begin{equation}\label{seq2}
\frac{1}{\epsilon_n}|\bar t_n - \bar s_n|^2 + \frac{1}{\epsilon_n}\|\bar x_n - \bar y_n\|^2 \rightarrow 0
\end{equation}
\begin{equation}\label{seq3}
N_{i^0,\epsilon}\rightarrow N_{i^0} := \sup_{(t,x)\in[0,T]\times\mathcal{P}}\{v(t,i^0,x)- w(t,i^0,x)\}
\end{equation}
\end{subequations}
Indeed for any $(t,x)\in[0,T]\times\mathcal{P}$, we have $v(t,i^0,x)-w(t,i^0,x) = \Gamma_{i^0,\epsilon}(t,t,x,x) \le N_{i^0,\epsilon}$. Taking the supremum we obtain $N_{i^0}\le N_{i^0,\epsilon}$ and therefore
\[
\frac{1}{\epsilon}|\bar t - \bar s|^2 + \frac{1}{\epsilon}\|\bar x - \bar y\|^2 \le v(\bar t, i^0, \bar x) - w(\bar s, i^0, \bar y) - N_{i^0} \le 2L - N_{i^0}
\]
The last inequality comes from the fact that $v$ and $w$ are bounded on the compact $[0,T]\times\mathcal{P}$. It follows that $|\bar t - \bar s|^2 + \|\bar x - \bar y\|^2 \rightarrow 0$. Now since the sequence $(\bar t, \bar s, \bar x, \bar y)$ (indexed by $\epsilon$) is in the compact, we can extract a subsequence $\epsilon_n\rightarrow 0$ such that $(\bar t_n,\bar s_n,\bar x_n, \bar y_n) \rightarrow (\hat t, \hat t, \hat x, \hat x)$. We have the following inequality:
\[
N_i \le v(\bar t_n, i^0, \bar x_n) - w(\bar s_n, i^0, \bar y_n) - \frac{1}{\epsilon_n}|\bar t_n - \bar s_n|^2 - \frac{1}{\epsilon_n}\|\bar x_n - \bar y_n\|^2 = N_{i^0,\epsilon_n} \le v(\bar t_n, i^0, \bar x_n) - w(\bar s_n, i^0, \bar y_n)
\]
Notice that $v(\bar t_n, i^0, \bar x_n) - w(\bar s_n, i^0, \bar y_n)\rightarrow v(\hat t, i^0, \hat x_n) - w(\hat t, i^0, \hat x_n) \le N_{i^0}$, we deduce that $N_{i^0} = v(\hat t, i^0, \hat x_n) - w(\hat t, i^0, \hat x_n)$ which implies (\ref{seq1}). (\ref{seq2}) and (\ref{seq3}) follows easily by taking the limit in the above inequality.

\vspace{3mm}
\noindent (iii) Now we prove the comparison principle. Using the notation introduced in step (ii), we need to prove $N_{i^0}\le 0$ for all $1\le i^0\le M^0$. Assume that there exists $1 \le i^0 \le M^0$ such that
\[
N_{i^0} = \sup_{1\le j^0 \le M^0} N_{j^0} >0
\]
We work towards a contradiction. Without loss of generality we assume that $N_{i^0} > N_{j^0}$ for all $j\neq i$. We then consider the subsequence $(\epsilon_n, \bar t_n, \bar s_n, \bar x_n, \bar y_n) \rightarrow (0, \hat t, \hat t, \hat x,\hat x)$ with regard to $i^0$ constructed in step (ii), for which (\ref{seq1}), (\ref{seq2}) and (\ref{seq3}) are satisfied. Since $v(\hat t, i^0, \hat x) - w(\hat t, i^0, \hat x) = N_{i^0} > 0$, we have $\hat t\neq T$. Instead of extracting a subsequence, we may assume that $\bar t_n\neq T$ and $\bar s_n \neq T$ for all $n\ge 0$. Moreover for any $j^0\neq i^0$, we have
\[
v(\hat t, j^0, \hat x) - w(\hat t, j^0, \hat x) \le N_j < N_i = v(\hat t, i^0, \hat x) - w(\hat t, i^0, \hat x)
\]
Since $v(\bar t_n, j^0, \bar x_n) - w(\bar s_n, j^0, \bar y_n) \rightarrow v(\hat t, j^0, \hat x) - w(\hat t, j^0, \hat x)$, instead of extracting a subsequence, we can assume that for all $j^0\neq i^0$ and $n\ge 0$,
\begin{equation}\label{seq4}
v(\bar t_n, j^0, \bar x_n) - w(\bar s_n, j^0, \bar y_n) \le v(\bar t_n, i^0, \bar x_n) - w(\bar s_n, i^0, \bar y_n)
\end{equation}
In the following we suppress the index $n$ for the sequence $(\epsilon_n, \bar t_n, \bar s_n, \bar x_n, \bar y_n)$ for sake of simplicity of notation. By definition of the maximizer, for any $(t,x)\in[0,T]\times\mathcal{P}$, we have:
\[
v(t,i^0,x) - w(\bar s,i^0, \bar y) - \frac{1}{\epsilon}\|x - \bar y\|^2- \frac{1}{\epsilon}|t - \bar s|^2 \le v(\bar t,i^0,\bar x) - w(\bar s,i^0, \bar y) - \frac{1}{\epsilon}\|\bar x - \bar y\|^2- \frac{1}{\epsilon}|\bar t - \bar s|^2
\]
Therefore $v(\cdot,i^0,\cdot) - \phi$ attains maximum at $(\bar t,\bar x)$ where
\begin{equation}\label{constsub1}
\phi(t,x) := \frac{1}{\epsilon}\|x - \bar y\|^2 + \frac{1}{\epsilon}|t - \bar s|^2\;\;\;\;\partial_t\phi(\bar t, \bar x) = \frac{2}{\epsilon}(\bar t - \bar s)\;\;\;\;
\nabla\phi(\bar t, \bar x) = \frac{2}{\epsilon}(\bar x - \bar y)
\end{equation}
Similarly $w(\cdot,i^0,\cdot) - \psi$ attains minimum at $(\bar s,\bar y)$ where
\begin{equation}\label{constsub2}
\psi(t,x) := -\frac{1}{\epsilon}\|x - \bar x\|^2 - \frac{1}{\epsilon}|t - \bar t|^2\;\;\;\;
\partial_t\psi(\bar s, \bar y) = \frac{2}{\epsilon}(\bar t - \bar s)\;\;\;\;
\nabla\psi(\bar s,\bar y) = \frac{2}{\epsilon}(\bar x - \bar y)
\end{equation}
Since $\bar s\neq T$ and $\bar t \neq T$, the definition of viscosity solution and (\ref{constsub1}), (\ref{constsub2}) gives the following inequalities:
\begin{align*}
\eta\le\;\;&\;\frac{2}{\epsilon}(\bar t - \bar s) + \inf_{\alpha^0\in A^0}\bigg\{f^0(\bar t, i^0, \alpha^0, \bar x) +(1-\sum_{k=1}^{M-1} \bar x_k)  \sum_{k=1}^{M-1}\frac{2}{\epsilon}(\bar x_k - \bar y_k)  q(\bar t, M, k, \phi(\bar t,M, i^0, \bar x), i^0, \alpha^0, \bar x)\\
&+\;\sum_{i,j=1}^{M-1} \frac{2}{\epsilon}(\bar x_j - \bar y_j)  \bar x_i  q(\bar t, i,j,\phi(\bar t,i,i^0,\bar x), i^0, \alpha^0, \bar x)+ \sum_{j^0\neq i^0} [v(\bar t,j^0, \bar x) - v(\bar t,i^0, \bar x)] q^0(\bar t, i^0, j^0, \alpha^0, \bar x)\bigg\}\\
0\ge\;\;&\;\frac{2}{\epsilon}(\bar t - \bar s) + \inf_{\alpha^0\in A^0}\{f^0(\bar s, i^0, \alpha^0, \bar y) + (1-\sum_{k=1}^{M-1} \bar y_k)  \sum_{k=1}^{M-1} \frac{2}{\epsilon}(\bar x_k - \bar y_k)  q(\bar s, M, k, \phi(\bar s,M, i^0, \bar y), i^0, \alpha^0, \bar y)\\
&+\; \sum_{i,j=1}^{M-1} \frac{2}{\epsilon}(\bar x_j - \bar y_j)  \bar y_i  q(\bar s, i,j,\phi(\bar s,i,i^0, \bar y), i^0, \alpha^0, \bar y)+ \sum_{j^0\neq i^0} [w(\bar s, j^0, \bar y) - w(\bar s,i^0, \bar y)] q^0(\bar s, i^0, j^0, \alpha^0, \bar y)\bigg\}
\end{align*}
Substracting the above two inequalities, we obtain:
\begin{equation}\label{contra}
0 < \eta \le I_1 + I_2 + I_3
\end{equation}
where the three terms $I_1$, $I_2$ and $I_3$ will be dealt with in the following. For $I_1$ we have:
\begin{align*}
I_1:=\;\;&\;\sup_{\alpha^0\in A^0}\{f^0(\bar t, i^0, \alpha^0, \bar x)-f^0(\bar s, i^0, \alpha^0, \bar y)\} \\
&\;+ \sup_{\alpha^0\in A^0}\{\sum_{j^0\neq i^0} [v(\bar t,j^0, \bar x) - v(\bar t,i^0, \bar x)] q^0(\bar t, i^0, j^0, \alpha^0, \bar x) - \sum_{j^0\neq i^0} [w(\bar s, j^0, \bar y) - w(\bar s,i^0, \bar y)] q^0(\bar s, i^0, j^0, \alpha^0, \bar y)\}\\
\le\;\;&\;  \sup_{\alpha^0\in A^0}\{f^0(\bar t, i^0, \alpha^0, \bar x) - f^0(\bar s, i^0, \alpha^0, \bar y)\} \\
&\;+ \sup_{\alpha^0\in A^0}\{\sum_{j^0\neq i^0}[v(\bar t,j^0, \bar x) - v(\bar t,i^0, \bar x)] (q^0(\bar t, i^0, j^0, \alpha^0, \bar x) - q^0(\bar s, i^0, j^0, \alpha^0, \bar y))\}\\
&\; + \sup_{\alpha^0\in A^0}\{\sum_{j^0\neq i^0}[v(\bar t,j^0, \bar x) - w(\bar s, j^0, \bar y) - v(\bar t,i^0, \bar x) + w(\bar s,i^0, \bar y)] q^0(\bar s, i^0, j^0, \alpha^0, \bar y)\}\\
\le\;\;& L(|\bar t - \bar s| + \|\bar x - \bar y\|) + 2C  L(|\bar t - \bar s| + \|\bar x - \bar y\|) + 0
\end{align*}
In the last inequality we use (\ref{seq4}) and the fact that $q^0(\bar s, i^0, j^0, \alpha^0, \bar y) \ge 0$ for $j^0\neq i^0$. We also use the Lipschitz property of $f^0$ and $q^0$. Now in light of $(\ref{seq1})$, we obtain $I_1\rightarrow 0$ as $\epsilon\rightarrow 0$. Now turning to $I_2$:
\begin{align*}
I_2:=\;\;&\; \sup_{\alpha^0\in A^0}\bigg\{\sum_{i,j=1}^{M-1}\frac{2}{\epsilon}(\bar x_j - \bar y_j)  \left [ \bar x_i  q(\bar t, i,j,\phi(\bar t,i,i^0,\bar x), i^0,\alpha^0, \bar x) - \bar y_i  q(\bar s, i,j,\phi(\bar s,i,i^0, \bar y), i^0,\alpha^0, \bar y)\right]\bigg\}\\
\le\;\;&\;\sup_{\alpha^0\in A^0}\bigg\{ \sum_{i,j=1}^{M-1}\frac{2}{\epsilon}|(\bar x_j - \bar y_j)(\bar x_i - \bar y_i)q(\bar t, i,j,\phi(\bar t,i,i^0,\bar x), i^0, \alpha^0,\bar x) |\bigg\} \\
&\;+ \sup_{\alpha^0\in A^0}\bigg\{\sum_{i,j=1}^{M-1}\frac{2}{\epsilon}|\bar y_i (\bar x_j - \bar y_j)(q(\bar t, i,j,\phi(\bar t,i,i^0,\alpha^0,\bar x), i^0, \alpha^0,\bar x) - q(\bar s, i,j,\phi(\bar s,i,i^0, \bar y), i^0, \alpha^0, \bar y))|\bigg\}\\
\le\;\;& \; \sum_{i,j=1}^{M-1}\frac{2}{\epsilon}C |\bar x_j - \bar y_j|  | \bar x_i - \bar y_i| + \sum_{j=1}^{M-1}\frac{2}{\epsilon}|\bar x_j - \bar y_j| L(M-1)(|\bar t - \bar s| + \|\bar x - \bar y\|)
\end{align*}
where in the last inequality we used the Lipschitz property of $q$ and $\phi$ uniformly in $\alpha$, as well as the boundedness of the function $q$. It follows that $I_2 \le C  \frac{1}{\epsilon}(|\bar t - \bar s|^2 + \|\bar x - \bar y\|^2)$ and by (\ref{seq2}) we see that $I_2\rightarrow 0$ as $\epsilon\rightarrow 0$. Finally we deal with $I_3$, which is defined by:
\begin{align*}
I_3:=\;\;&\; \sup_{\alpha^0\in A^0}\bigg\{(1-\sum_{k=1}^{M-1} \bar x_k)  \sum_{k=1}^{M-1}\frac{2}{\epsilon}(\bar x_k - \bar y_k)  q(\bar t, M, k, \phi(\bar t,M, i^0, \bar x), i^0, \alpha^0, \bar x) \\
&\;- (1-\sum_{k=1}^{M-1} \bar y_k)  \sum_{k=1}^{M-1} \frac{2}{\epsilon}(\bar x_k - \bar y_k)  q(\bar s, M, k, \phi(\bar s,M, i^0, \bar y), i^0,\alpha^0,  \bar y)\bigg\}
\end{align*}
Using a similar estimation as for $I_2$, we obtain $I_3 \le C  \frac{1}{\epsilon}(|\bar t - \bar s|^2 + \|\bar x - \bar y\|^2)$. Therefore by tending $\epsilon$ to $0$ in the inequality (\ref{contra}), we obtain a contradiction. This completes the proof.

\subsection{Proof of Lemma \ref{lemexistencenash}}
The main tool we use for the proof is the theory of limit operation on viscosity solution. We refer the reader to \cite{lions} for an introductory presentation of limit operation on viscosity solution to non-linear second order PDE. Here we adapt the results established therein to the case of coupled system of non-linear first order PDE, which is the HJB equation we are interested in throughout the paper. Let $\mathcal{O}$ be some locally compact subset of $\mathbb{R}^d$ and $(F_n)_{n\ge 0}$ be a sequence of functions defined on $\{1,\dots,M\}\times\mathcal{O}$. We define the limit operator $\limsup^*$ and $\liminf_*$ as follow:
\begin{align*}
{\limsup}^* F_n(i, x) :=& \lim_{n\rightarrow +\infty, \epsilon\rightarrow 0} \sup\{F_k(i, y) | k\ge n, \|y - x\|\le \epsilon\}\\
{\liminf}_* F_n(i, x) :=& \lim_{n\rightarrow +\infty, \epsilon\rightarrow 0} \inf\{F_k(i, y) | k\ge n, \|y - x\|\le \epsilon\}
\end{align*}
Intuitively, we expect that the limit of a sequence of viscosity solutions solves the limiting PDE. It turns out that this is the proper definition of the limit operation, as is stated in the following lemma:
\begin{lemma}\label{lemprooflem1}
Let $\mathcal{O}$ be a locally compact subset of $\mathbb{R}^d$, $(u_n)_{n\ge 0}$ be a sequence of continuous functions defined on $\{1,\dots, M\}\times\mathcal{O}$ and $H_n$ be a sequence of functions defined on $\{1, \dots, M\} \times [0,T] \times \mathcal{O} \times \mathbb{R}^M \times \mathbb{R}^d$, such that for each $n$, $u_n$ is viscosity solutions to the system of PDEs $H_n(i, t, x, u, \partial_t u(\cdot,i,\cdot), \nabla u(\cdot,i,\cdot)) = 0, \forall 1\le i\le M$, in the sense of Definition \ref{viscositydefi}. Assume that for each $i,x,d,p$, the mapping $u \rightarrow H(i,t,x,u,d,p)$ is non-decreasing in the $j$-th component of $u$, for all $j\neq i$. Then $u^* := \limsup^*u_n$ (resp. $u_* := \liminf_* u_n$) is viscosity subsolution (resp. supersolution) to $H^*(i, t, x, u, \partial_t u(i,\cdot), \nabla u(i,\cdot)) = 0, \forall 1\le i\le M$ (resp. $H_*(i, t, x, u,\partial_t u(\cdot,\cdot, i,\cdot), \nabla u(\cdot,\cdot, i,\cdot)) = 0, \forall 1\le i\le M$).
\end{lemma}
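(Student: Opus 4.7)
The strategy is the Barles--Perthame half-relaxed limit method, adapted to a coupled system. The two claims are symmetric, so I would concentrate on showing that $u^* = \limsup^* u_n$ is a viscosity subsolution and indicate at the end the analogous supersolution argument for $u_* = \liminf_* u_n$.

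Fix $i_0 \in \{1,\dots,M\}$, a smooth test function $\theta$ on $[0,T]\times\mathcal{O}$ and a point $(t_0, x_0)$ at which $u^*(\cdot, i_0, \cdot) - \theta$ attains a strict local maximum with $u^*(t_0, i_0, x_0) = \theta(t_0, x_0)$; the reduction from a non-strict to a strict maximum is the standard perturbation of $\theta$ by $|t-t_0|^2 + \|x-x_0\|^2$. By the classical localisation lemma (Barles' Lemma 6.1 of \cite{lions}, already invoked inside the proof of Theorem \ref{conv1}), there exist sequences $n_k\to\infty$ and $(t_k, x_k) \to (t_0, x_0)$ such that the map $(t,x) \mapsto u_{n_k}(t, i_0, x) - \theta(t,x)$ attains a local maximum at $(t_k, x_k)$ and $u_{n_k}(t_k, i_0, x_k) \to u^*(t_0, i_0, x_0)$. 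The subsolution inequality for $u_{n_k}$ at $(t_k, x_k)$ with test function $\theta$ then reads
\[
H_{n_k}\bigl(i_0, t_k, x_k, u_{n_k}(t_k, \cdot, x_k), \partial_t \theta(t_k, x_k), \nabla \theta(t_k, x_k)\bigr) \le 0.
\]

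The next step is to pass to the limit in this inequality. Extracting a further subsequence (using local uniform boundedness of $u_n$) one obtains $u_{n_k}(t_k, j, x_k) \to \bar u_j$ for every $j$, with $\bar u_{i_0} = u^*(t_0, i_0, x_0)$ and, by the very definition of the half-relaxed upper limit, $\bar u_j \le u^*(t_0, j, x_0)$ for every $j\ne i_0$. The definition of $H^* = \limsup^* H_n$ as an upper envelope and the continuity of $\partial_t\theta$ and $\nabla\theta$ then give
\[
H^*\bigl(i_0, t_0, x_0, \bar u, \partial_t \theta(t_0, x_0), \nabla \theta(t_0, x_0)\bigr) \le 0.
\]
The monotonicity hypothesis on $H$ in its off-diagonal arguments $u_j$, $j\ne i_0$, is then invoked to swap $\bar u$ for the vector $u^*(t_0, \cdot, x_0)$, which agrees with $\bar u$ in the $i_0$-th slot and differs from it in the other slots by the sign permitted by the monotonicity. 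This delivers the desired subsolution inequality at $(t_0, i_0, x_0)$. The terminal condition at $t=T$ (of the form $u - g \le 0$) is handled by the same extraction and limit passage, and does not require any monotonicity. The supersolution statement for $u_*$ is proved symmetrically, working with $\liminf_*$, $H_*$, a local minimum of $u_*(\cdot,i_0,\cdot)-\theta$, and the opposite-sign comparison $\bar u_j \ge u_*(t_0, j, x_0)$.

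The main obstacle, and the whole point of the monotonicity hypothesis, is the one-sided nature of half-relaxed convergence in the off-diagonal components: along the extracted subsequence one only gets $\bar u_j \le u^*(t_0, j, x_0)$ for $j\ne i_0$, not equality, so the limit inequality initially lives at the ``wrong'' vector $\bar u$. The structural monotonicity of $H$ in the $u_j$'s is precisely the device that allows the inequality to be transferred from $\bar u$ to $u^*$; in the applications to the HJB operators \eqref{hjbmajor}, \eqref{hjbminor}, \eqref{majorJpde} and \eqref{minorJpde}, this monotonicity reduces to the non-negativity of the off-diagonal entries of $q^0$, which is automatic from the Q-matrix property in Hypothesis \ref{transratemajor}, so the abstract stability lemma can then be used as a black box inside the proof of Lemma \ref{lemexistencenash}.
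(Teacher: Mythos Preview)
Your proof is essentially the same Barles--Perthame half-relaxed limit argument as the paper's: both extract a subsequence via Lemma~6.1 of \cite{lions}, use the one-sided bound $\bar u_j \le u^*(t_0,j,x_0)$ for $j\neq i_0$, and invoke the off-diagonal monotonicity to pass from $\bar u$ to $u^*(t_0,\cdot,x_0)$; the only cosmetic differences are that the paper works with sub/superjets rather than test functions and applies the monotonicity \emph{before} taking the limit (via the envelopes $S^u_{n,\epsilon}$) rather than after. One slip to correct: with the sign convention of Definition~\ref{viscositydefi} the subsolution inequality reads $H_{n_k}(\ldots)\ge 0$, not $\le 0$, and it is this sign together with the non-decreasing dependence on $u_j$ that makes the swap $\bar u \mapsto u^*$ go the right way.
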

The proof requires the definition of viscosity subsolution and supersolution based on the notion of first-order subjet and superjet (see definition 2.2 in \cite{lions}). Let $u$ be a continuous function defined on $[0,T]\times\{1,\dots, M\}\times\mathcal{O}$, we define the first order superjet of $u$ on $(t,i,x)$ to be:
\[
\mathcal{J}^+ u (t,i,x) := \{(d,p)\in\mathbb{R}\times\mathbb{R}^d : u(s,i,y) \le u(t,i,x) + d(s-t) + (y-x) p + o(|t-s| + \|y-x\|), (s,y)\rightarrow(t,x) \}
\]
Then $u$ is a viscosity subsolution (resp. supersolution) to the system $H(i, t, x, u, \partial_t u(i,\cdot), \nabla u(i,\cdot))=0$ if only if for all $(i,t,x)$ and $(d,p)\in \mathcal{J}^+ u (t,i,x)$ (resp. $(d,p)\in \mathcal{J}^- u (t,i,x)$), we have $H(i, t, x, u, d, p) \ge 0$ (resp. $H(i, t, x, u, d, p) \le 0$).

\begin{proof}
Fix $(t,i,x)\in [0,T]\times \{1,\dots,M\}\times\mathcal{O}$ and let $(d,p)\in\mathcal{J}^+ u^*(t,i,x)$. We want to show that:
\[
H^*(i,t,x,u^*(t,\cdot,x), d, p) \ge 0
\]
where $u^*(t, \cdot , x)$ represents the $M-$dimensional vector $[u^*(t, k , x)]_{1\le k\le M}$.
By lemma 6.1., there exists a sequence $n_j\rightarrow+\infty$, $x_j\in\mathcal{O}$, $(d_j, p_j)\in \mathcal{J}^+ u_{n_j}(t_j, i, x_j)$ such that:
\[
(t_j, x_j, u_{n_j}(t_j, i, x_j), d_j, p_j) \rightarrow(t,x,u^*(t, i, x), d, p), \;\;\;j\rightarrow +\infty
\]
Since $u_{n}$ is viscosity subsolution to $H_n = 0$, we have
\[
H_{n_j}(i, t_j, x_j, u_{n_j}(t_j, \cdot , x_j), d_j, p_j) \ge 0
\]
Now let us denote
\[
S^u_{n, \epsilon}(t,k,x):= \sup\{u_j(s,k,y), \max(|s-t|, \|y-x\|)\le\epsilon, j\ge n\}
\]
Then we have $u^*(t,i,x) = \lim_{n\rightarrow +\infty, \epsilon\rightarrow 0} S^u_{n,\epsilon}(t,i,x)$. Fix $\delta >0$, then there exists $\epsilon^0>0$ and $N^0>0$ such that for all $\epsilon\le  \epsilon^0$ and $j\ge N^0$, we have 
\[
|S^u_{n_j, \epsilon}(t,k,x) - u^*(t,k,x)| \le \delta,\;\;\;\forall k\neq i
\]
 Moreover, there exists $N>0$ such that for all $j\ge N$, 
\[
\|(t_j, x_j, u_{n_j}(t_j, i, x_j), d_j, p_j) - (t,x,u^*(t,i,x),d,p)\|\le \delta\wedge \epsilon^0
\]
Then for any $j\ge N^0\vee N$, we have $\|(t_j, x_j) - (t,x)\|\le \epsilon^0$, and by definition of $S^u_{\epsilon,n}$ we deduce that $u_{n_j}(t_j, k, x_j) \le S^u_{n_j,\epsilon^0}(t,k,x)$ for all $k\neq i$. By the monotonicity property of $H_{n_j}$, we have:
\[
H_{n_j}(i, t_j, x_j, S^u_{n_j,\epsilon^0}(t,1,x), \dots, S^u_{n_j,\epsilon^0}(t,i-1,x), u_{n_j}(t_j, i, x_j), S^u_{n_j,\epsilon^0}(t,i +1,x),\dots, S^u_{n_j,\epsilon^0}(t,M,x),   d_j, p_j) \ge 0
\]
Now in the above inequality, all the arguments of $H_{n_j}$ except $i$ is located in a ball of radius $\delta$ centered on the point $(t,x,u^*(t,\cdot,x), d, p)$. We have thus
\[
S^H_{n_j,\delta}(i,t,x,u^*(t,\cdot,x), d, p) \ge 0
\]
where we have defined:
\[
S^H_{n,\delta}(i,t,x,u, d, p):=\sup\{H_{j}(i,s,y, v, e, q), \|(s,y,v,e,q) - (t,x,u,d,p)\|\le\epsilon, j\ge n\}
\]
We have just proved that for any $\delta >0$, there exists $M>0$ such that for any $j\ge M$, we have $S^H_{n_j,\delta}(i,t,x,u^*(t,\cdot,x), d, p) \ge 0$. Since we have
\[
H^*(i,t,x,,u^*(t,\cdot,x), d, p) = \lim_{n\rightarrow +\infty, \epsilon \rightarrow 0} S^H_{n_j,\delta}(i,t,x,u^*(t,\cdot,x), d, p)
\]
We deduce that $H^*(i,t,x,,u^*(t,\cdot,x), d, p)\ge 0$.
\end{proof}
Now going back to the proof of Lemma \ref{lemexistencenash}, we consider a converging sequence of elements $(\phi^0_n, \phi_n) \rightarrow (\phi^0, \phi)$ in $\mathbb{K}$, where we have defined $\mathbb{K}$ to be the collection of  major and minor player's controls $(\phi^0, \phi)$ that are $L-$Lipschitz in $(t,x)$. We denote $V^0_n$ (resp. $V^0$) the value function of major player's control problem associated with the controls $\phi_n$ (resp. $\phi$). We also use the notation $V^{0*} := \limsup V_n^0$ and $V^{0}_* := \liminf V_n^0$. For all $n\ge 0$, we define the operator $H_n$:
\begin{align*}
&H^0_n(i^0,t,x,u,d,p) :=\;\;d + \inf_{\alpha^0\in A^0} \bigg\{f^0(t,i^0,\alpha^0, x) + \sum_{j^0\neq i^0} (u_{j^0} - u_{i^0}) q^0(t,i^0,j^0, \alpha^0,x) \\
&\;\;+ (1-\sum_{k=1}^{M}x_k)  \sum_{k=1}^{M-1}p_k  q(t,M,k,\phi_n(t, M, i^0, x), i^0,\alpha^0, x)+ \sum_{k,l=1}^{M-1}p_l x_k q(t,k,l,\phi_n(t,k,i^0,x),i^0,\alpha^0,x)\bigg\}, \;\;\;\text{if}\;\;\; t<T \\
&H^0_n(i^0,T,x,u,d,p) :=\;\; g^0(i^0,x) - u_{i^0}
\end{align*}
Then $V^0_n$ is viscosity solution to the equation $H^0_n = 0$. It is clear to see that the operator $H_n^0$ satisfies the monotonicity condition in Lemma \ref{lemprooflem1}. To evaluate $H^{0*}:=\limsup H_n^0$ and $H^0_*:=\liminf H_n^0$, we remark that for each $1\le i^0\le M^0$, the sequence of functions $(t,x,u,d,p)\rightarrow H_n(i^0,t,x,u,d,p)$ is equicontinuous. Indeed, this is due to the fact that the sequence $\phi_n$ is equicontinuous and the function $q$ is Lipschitz. Therefore $H^{0*}$ and $H^0_*$ are simply the limit in the pointwise sense when $t<T$. When $t=T$, the boundary condition needs to be taken into account. The following computation is straightforward to verify:
\begin{lemma}\label{lemprooflem2}
Define the operator $H^0$ as:
\begin{align*}
&H^0(i^0,t,x,u,d,p) :=\;\;d + \inf_{\alpha^0\in A^0} \bigg\{f^0(t,i^0,\alpha^0, x) + \sum_{j^0\neq i^0} (u_{j^0} - u_{i^0}) q^0(t,i^0,j^0, \alpha^0,x) \\
&\;\;+ (1-\sum_{k=1}^{M}x_k)  \sum_{k=1}^{M-1}p_k  q(t,M,k,\phi(t, M, i^0, x), i^0, \alpha^0, x)+ \sum_{k,l=1}^{M-1}p_l x_k q(t,k,l,\phi(t,k,i^0,x),i^0,\alpha^0, x)\bigg\},\;\;\;\forall t\le T
\end{align*}
Then we have:
\begin{align*}
H^{0*}(i^0,t,x,u,d,p) &= H_0{*}(i^0,t,x,u,d,p)  = H^0(i^0,t,x,u,d,p), \;\text{if}\;\;\;t<T\\
H^{0*}(i^0,T,x,u,d,p) &= \max \{ (g^0(i^0,x) - u_{i^0}), \;H^{0*}(i^0,t,x,u,d,p) \}\\
H^{0}_*(i^0,T,x,u,d,p) &= \min \{ (g^0(i^0,x) - u_{i^0}), \;H^{0*}(i^0,t,x,u,d,p) \}
\end{align*}
\end{lemma}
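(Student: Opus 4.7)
My plan is to exploit the uniform convergence $(\phi_n^0, \phi_n) \to (\phi^0, \phi)$ in $(\mathbb{V}, \|\cdot\|)$ together with the Lipschitz and boundedness properties of $q, q^0, f^0$ from Hypotheses \ref{lipassump}--\ref{boundassump} to identify the half-relaxed limits of the sequence $H_n^0$.

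First I would establish that, for each fixed $i^0$, the family of maps $(t, x, u, d, p) \mapsto H_n^0(i^0, t, x, u, d, p)$ is equicontinuous on every compact subset of $[0,T) \times \mathcal{P} \times \mathbb{R}^{M^0} \times \mathbb{R} \times \mathbb{R}^{M-1}$, with modulus of continuity independent of $n$. This reduces, via the elementary fact that the infimum over $\alpha^0 \in A^0$ inherits a modulus of continuity uniform in $\alpha^0$, to the uniform-in-$\alpha^0$ Lipschitz regularity of the bracket expression; and the latter follows because $q^0(t, i^0, j^0, \alpha^0, x)$ and $q(t, i, j, \phi_n(t, i, i^0, x), i^0, \alpha^0, x)$ are jointly Lipschitz in $(t, x)$ uniformly in $\alpha^0$ (by Hypothesis \ref{lipassump}), with the Lipschitz constant of the composition controlled by the common Lipschitz constant $L$ of the $\phi_n \in \mathbb{K}$. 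Pointwise convergence $H_n^0 \to H^0$ follows at once from $(\phi_n^0, \phi_n) \to (\phi^0, \phi)$ and the Lipschitz continuity of $q^0, q, f^0$, and combined with equicontinuity it upgrades to uniform convergence on compact subsets of the domain with $t < T$.

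From here the three displayed identities are obtained by unwinding the definitions of $\limsup^*$ and $\liminf_*$. For $t < T$, any approximating sequence $(n_k, t_k, x_k, u_k, d_k, p_k) \to (\infty, t, x, u, d, p)$ eventually has $t_k < T$, and the uniform convergence on compacts gives $H_{n_k}^0 \to H^0(i^0, t, x, u, d, p)$; hence both $H^{0*}$ and $H^0_*$ equal $H^0$, which is the first identity. For $t = T$, an approximating sequence can split into an \emph{interior branch} ($t_k < T$), along which $H_{n_k}^0$ converges to the interior expression evaluated at $(T, x, u, d, p)$, and a \emph{boundary branch} ($t_k = T$), along which $H_{n_k}^0(i^0, T, x_k, u_k, d_k, p_k) = g^0(i^0, x_k) - (u_k)_{i^0}$ converges to $g^0(i^0, x) - u_{i^0}$ by continuity of $g^0$. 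Taking the supremum (resp. infimum) over both branches produces the $\max$ (resp. $\min$) formula claimed.

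There is no substantial technical obstacle; the only real care point is the uniform-in-$n$ control on $H_n^0$ needed to pass the infimum through the limit, which rests on the uniform Lipschitz constant of the $\phi_n \in \mathbb{K}$ and on the joint Lipschitz regularity of the transition rates and running cost. Everything else is a direct translation of the definitions of the half-relaxed limits, which is why the authors call the computation straightforward.
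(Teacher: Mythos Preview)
Your proposal is correct and follows essentially the same approach as the paper: the paper remarks (in the paragraph preceding the lemma) that the sequence $H_n^0$ is equicontinuous because $\phi_n$ is equicontinuous and $q$ is Lipschitz, so the half-relaxed limits coincide with the pointwise limit for $t<T$, while at $t=T$ the boundary condition must be taken into account; you spell out precisely this argument, including the interior/boundary branching at $t=T$ that the paper leaves implicit under ``straightforward to verify.''
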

From the proof of Theorem \ref{hjbtheomajor}, we see immediately that $V^0$ is a viscosity subsolution (resp. supersolution) of $H^{0,*} = 0$ (resp. $H^{0}_* = 0$). By Lemma \ref{lemprooflem2}, $V^{0*}$ (resp. $V^{0}_*$) is a viscosity subsolution of $H^{0,*} = 0$ (resp. supersolution of $H^{0}_* = 0$). Indeed, following exactly the proof of Theorem \ref{compmajor}, we can show that a viscosity supersolution of $H^{0}_* = 0$ is greater than a viscosity subsolution of $H^{0,*} = 0$. By definition of $\limsup$ and $\liminf$, we have $V^{0,*} \ge V^{0}_*$. It follows that $V^0 \le V^{0}_* \le V^{0,*} \le V^0$, and therefore we have $\limsup V_n^0 = \liminf H_n^0 = V^0$. Then we obtain the uniform convergence of $V_n^0$ to $V^0$ following Remark 6.4 in \cite{lions}.

\subsection{Proof of Proposition \ref{majorNproperty} \& \ref{majorvalueodeproperty}}
We use the similar techniques as in \cite{GomesMohrSouza_continuous} where the author provides gradient estimates for N-player game without major player. Let us first remark that the system of ODEs (\ref{odemajorpayoff}) can be written in the following form:
\[
-\dot\theta_{m}(t) = f_m(t) + \sum_{m'\neq m} a_{m'm}(t)(\theta_{m'} - \theta_{m}),\;\;\;\theta_m(T) = b_m
\]
where we denote the index $m:=(i^0,x)\in\{1,\dots,M^0\}\times\mathcal{P}^N$ and we notice that $a_{m'm}\ge 0$ for all $m'\neq m$. This can be further written in the compact form:
\begin{equation}\label{generalode}
-\dot\theta(t) = f(t) + M(t)  \theta,\;\;\;\theta(T) = b
\end{equation}
where $M$ is a matrix indexed by $m$, with all off-diagonal entries being positive and the sum of every row equals $0$. Define $\|\cdot\|$ to be the uniform norm of a vector: $\|b\| := \max_{m} |b_m|$. Instead of proving (i) in Proposition \ref{majorNproperty}, we prove a more general result, which is a consequence of Lemma 4 and Lemma 5 in \cite{GomesMohrSouza_continuous}. 
\begin{lemma}\label{odebound}
Let $\phi$ be a solution to the ODE (\ref{generalode}). Assume $f$ is bounded then we have
\[
\|\theta(t)\| \le \int_t^T \|f(s)\|ds  + \|b\|
\]
\end{lemma}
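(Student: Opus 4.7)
The approach I will take rests on identifying $M(t)$ as the generator of a continuous-time Markov chain on the finite index set: the hypotheses (nonnegative off-diagonal entries, zero row sums) are exactly the defining conditions of a Q-matrix. This lets me represent $\theta$ via a Feynman--Kac-type formula and conclude by the contractivity of stochastic matrices in the sup norm.

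First I will construct, by classical Cauchy--Lipschitz theory for matrix ODEs in finite dimension, the two-parameter family $\{P(t,s)\}_{t\le s}$ defined as the unique solution of the backward Kolmogorov equation $\partial_t P(t,s) = -M(t)P(t,s)$ with $P(s,s)=I$. Two structural properties are needed: (a) the rows of $P(t,s)$ sum to $1$, which follows immediately from $M(t)\mathbf{1}=0$ since then $P(t,s)\mathbf{1}$ solves the same initial-value problem as the constant $\mathbf{1}$; and (b) the entries of $P(t,s)$ are nonnegative. Property (b) I will obtain by a Duhamel/Picard expansion: writing $M(t)=D(t)+N(t)$ with $D(t)$ diagonal and $N(t)\ge 0$ componentwise, the integral equation $P(t,s)=E(t,s)+\int_t^s E(t,u)N(u)P(u,s)\,du$, where $E(t,s)=\exp\bigl(-\int_t^s D(u)\,du\bigr)$ is diagonal with positive entries, yields a convergent series $\sum_k P_k(t,s)$ of nonnegative matrices. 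Together, (a) and (b) say that $P(t,s)$ is a stochastic matrix, hence $\|P(t,s)v\|_\infty\le \|v\|_\infty$ for every vector $v$.

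Second I will verify, by direct differentiation using the backward equation for $P(t,s)$ and the fundamental theorem of calculus, that
\[
\theta(t) = P(t,T)\,b + \int_t^T P(t,s)\,f(s)\,ds
\]
solves $-\dot\theta=f+M\theta$ with $\theta(T)=b$. Uniqueness is standard. The estimate then drops out:
\[
\|\theta(t)\|_\infty \;\le\; \|P(t,T)b\|_\infty + \int_t^T \|P(t,s)f(s)\|_\infty\,ds \;\le\; \|b\|_\infty + \int_t^T \|f(s)\|_\infty\,ds.
\]

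The only delicate point is property (b), the entrywise nonnegativity of the time-inhomogeneous transition kernel, which is the main obstacle. If one wants to avoid the semigroup construction altogether, an equally acceptable alternative is a direct maximum principle on $z(t):=\theta(t)-\eta(t)\mathbf{1}$ with $\eta(t):=\|b\|_\infty+\int_t^T \|f(s)\|_\infty\,ds$: the equation $-\dot z = (f-\|f\|_\infty\mathbf{1})+Mz$ has a nonpositive source and nonpositive terminal value, and at any index realizing $\max_m z_m(t)>0$ the off-diagonal positivity of $M$ together with $M\mathbf{1}=0$ forces $\dot z_{m^\ast}(t)\ge 0$, so $\max_m z_m$ cannot become strictly positive going backward from $T$; applying the same argument to $-\theta$ yields the lower bound. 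Either route gives the stated inequality.
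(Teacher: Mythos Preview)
Your proof is correct and follows essentially the same route as the paper: construct the two-parameter fundamental solution $K(t,s)$ (your $P(t,s)$) of $\partial_t K=-M(t)K$, use that it is a stochastic matrix and hence a contraction for $\|\cdot\|_\infty$, and conclude. The only cosmetic differences are that the paper bounds $f$ by $\|f\|\,e$ before integrating (obtaining a componentwise inequality rather than your exact Feynman--Kac representation) and cites Lemmas~4--5 of \cite{GomesMohrSouza_continuous} for the stochastic-matrix properties you instead sketch via the Duhamel expansion; your alternative maximum-principle argument is a clean self-contained substitute but is not the paper's method.
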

\begin{proof}
For any $t\le s \le T$ we denote the matrix $K(t,s)$ as the solution of the following system:
\[
\frac{dK(t,s)}{dt} = -M(t) K(t,s), \;\;\; K(s,s) = I
\]
where $I$ stand for the identity matrix. Now for $\theta$, we clearly have
\[
-\dot\theta(s) = f(s) + M(s)  \theta(s) \le \|f(s)\| e + M(s)  \theta (s)
\]
where we denote $e$ to be the vector with all the components equal to $1$. Then using Lemma 5 in \cite{GomesMohrSouza_continuous}, we have
\[
-K(t,s)\dot\theta(s) \le K(t,s)  M(t)  \theta (s) + \|f(s)\| K(t,s)  e
\]
Note that $K(t,s)\dot\theta(s) + K(t,s)  M(t)  \theta (s) = \frac{d}{ds}K(t,s)\theta(s)$. We integrate the above inequality between $t$ and $T$ to obtain:
\[
\theta(t) \le K(t,T) b + \int_{t}^{T} \|f(s)\| K(t,s)  e \;ds
\]
Now using Lemma 4 in \cite{GomesMohrSouza_continuous}, we have $\|K(t,T) b\| \le \|b\|$ and $\|K(t,T) e\| \le \|e\|=1$. This implies that:
\[
\max_{m} \theta_m(t) \le \|b\| + \int_t^T \|f(s)\|ds
\]
Indeed starting from the inequality $\dot\theta(s) \ge -\|f(s)\| e +  M(s)  \phi (s)$ and going through the same steps we obtain:
\[
\min_{m} \theta_m(t) \ge -\|b\| - \int_t^T \|f(s)\|ds
\]
The desired inequality follows.
\end{proof}
Now we turn to the proof of Proposition \ref{majorNproperty}. Let $\theta$ be the unique solution to the system of ODEs (\ref{odemajorpayoff}). Recall the notation $e_{ij}:=\mathbbm{1}_{j\neq M}e_j-\mathbbm{1}_{i\neq M}e_i$. For any $k\neq l$ and define $z(t,i^0,x,k,l):=\theta(t,i^0,x+\frac{1}{N} e_{kl}) - \theta(t,i^0,x)$. Then $z(t,\cdot,\cdot,\cdot,\cdot)$ can be viewed as a vector indexed by $i^0, x, k, l$. Substracting the ODEs satisfied by $\theta(t,i^0,x+\frac{1}{N}  e_{kl})$ and $\theta(t,i^0,x)$, we obtain that $z$ solves the following system of ODEs:
\begin{equation}\label{odegradient}
\begin{aligned}
-\dot z(t,i^0,x,k,l) = & F(t,i^0,x,k,l) + \sum_{j^0, j^0\neq i^0}(z(t,j^0,x,k,l) -  z(t,i^0,x,k,l)) q^0_{\phi^0}(t, i^0, j^0,x)\\
& + \sum_{(i,j),j\neq i} (z(t,i^0,x+\frac{1}{N} e_{ij}, k, l) - z(t,i^0,x,k,l))  N x_i q_{\phi}(t,i,i^0,x)\\ 
 z(T, i^0, x, k, l) =&g^0(i^0, x+\frac{1}{N} e_{kl}) - g^0(i^0, x)
\end{aligned}
\end{equation}
Where we have defined the $F$ to be:
\begin{align*}
 F(t,i^0,x,k,l):=&\; f^0_{\phi^0}(t,i^0,x+\frac{1}{N} e_{kl}) - f^0_{\phi^0}(t, i^0, x)\\
&+\sum_{j^0,j^0\neq i^0} [\theta(t,j^0,x+\frac{1}{N}e_{kl})- \theta(t,i^0,x+\frac{1}{N}e_{kl})][q^{0}_{\phi^0}(t,i^0,j^0,x+\frac{1}{N}e_{kl}) - q^{0}_{\phi^0}(t, i^0, j^0,x)]\\
& + \sum_{(i,j),j\neq i} [\theta (t,i^0, x+\frac{1}{N}e_{ij}+\frac{1}{N}e_{kl}) - \theta (t,i^0, x+\frac{1}{N}e_{kl})]\\
&\;\;\;\;\;\;\;\;\;\;\;\;\;\;\; \times N[(x+\frac{1}{N}e_{kl})_{i} q_{\phi}(t,i,j,i^0, x+\frac{1}{N}e_{kl}) - x_i q_{\phi}(t,i,j,i^0, x)]
\end{align*}
Then by the uniform bound provided in Lemma \ref{odebound}, together with the Lipschitz property of $f^0, g^0, q^0, q, \alpha,\beta$, we have:
\[
\|F(t)\| \le \frac{L}{N} + 2(T\|f^0\|_{\infty} + \|g^0\|_{\infty})  \frac{L}{N}  + \|z(t)\|\cdot L,\;\;\;\;\|z(T)\|\le \frac{L}{N}
\]
Where $L$ is a generic Lipschitz constant. Now we are ready to apply Lemma \ref{odebound} to (\ref{odegradient}). We have for all $t\le T$
\[
\|z(t)\|\le \frac{L}{N} + \int_{t}^{T} (\frac{C}{N} + L \|z(s)\|) ds
\]
Where $C$ is a constant only depending on $T, L, \|f^0\|_{\infty}, \|g^0\|_{\infty}$. Finally the Gronwall's inequality allows to conclude.

The proof of Proposition \ref{majorvalueodeproperty} is similar. We consider the solution $\theta$ to the ODE (\ref{odemajorvalue}) and we  keep the notation $z(t,i^0,x,k,l)$ as before. For $v\in\mathbb{R}^{M^0}$, $x\in\mathcal{P}$, $t\le T$ and $i^0 = 1,\dots,M^0$, denote:
\[
h^0(t,i^0,x,v) := \inf_{\alpha^0 \in A^0} \{f^0(t,\alpha^0,i^0,x) + \sum_{j^0\neq i^0} (v_{j^0} - v_{i^0}) q^0(t,i^0,j^0,\alpha^0,x)\}
\]
Then for all $x,y \in \mathcal{P}$, $u,v \in \mathbb{R}^{M^0}$, using the Lipschitz property of $f^0$ and $q^0$ and the boundedness of $q^0$, we have
\begin{equation}\label{esth}
|h^0(t,i^0,x,v) - h^0(t,i^0,y,u)| \le L|x-y| + 2(M^0-1) \max\{\|u\|, \|v\|\}  \|x - y\| + C^q  2(M^0-1)  \|v-u\|
\end{equation}
Substracting the ODEs satisfied by $\theta(t,i^0,x+\frac{1}{N}  e_{kl})$ and $\theta(t,i^0,x)$, we obtain that $z$ solves the following system of ODEs:
\begin{align*}
-\dot z(t,i^0,x,k,l) = & F(t,i^0,x,k,l) + \sum_{(i,j),j\neq i} (z(t,i^0,x+\frac{1}{N} e_{ij}, k, l) - z(t,i^0,x,k,l))  N x_i q_{\phi}(t,i,j,i^0,x)\\ 
 z(T, i^0, x, k, l) =&g^0(i^0, x+\frac{1}{N} e_{kl}) - g^0(i^0, x)
\end{align*}
where $F$ is given by:
\begin{align*}
 F(t,i^0,x,k,l):=&h(t,i^0,x+\frac{1}{N}e_{kl},\theta (t,\cdot, x+\frac{1}{N}e_{kl})) - h(t,i^0,x,\theta (t,\cdot, x))\\
& + \sum_{(i,j),j\neq i} [\theta (t,i^0, x+\frac{1}{N}e_{ij}+\frac{1}{N}e_{kl}) - \theta (t,i^0, x+\frac{1}{N}e_{kl})]\\
&\;\;\;\;\;\;\;\;\;\;\;\;\;\;\; \times N[(x+\frac{1}{N}e_{kl})_{i} q_{\phi}(t,i,j,i^0, x+\frac{1}{N}e_{kl}) - x_i q_{\phi}(t,i,j,i^0, x)]
\end{align*}
By the estimation (\ref{esth}), the uniform bound provided in Lemma \ref{odebound}, together with the Lipschitz property of $f^0, g^0, q^0, q, \phi^0,\phi$, we have:
\[
\|F(t)\| \le \frac{1}{N}(L + 2(M^0 - 1)(T\|f\|^0_{\infty} + \|g\|^0_{\infty})) + 2C_q(M^0-1)\|z(t)\| + M( M - 1)( L_\phi L + C_q )\|z(t)\|
\]
We apply Lemma \ref{odebound} to obtain:
\begin{align*}
\|z(t)\|\le&\;\; \frac{L}{N} + \int_{t}^{T} (M( M - 1)( L_\phi L + C_q )+  2C_q(M^0-1))\|z(s)\| + \frac{1}{N}( L + 2(M^0 - 1)(T\|f\|^0_{\infty} + \|g\|^0_{\infty})) ds\\
:=&\;\;\frac{C_0 + C_1 T + C_2 T^2}{N} + \int_t^T (C_3 + C_4 L_\phi) \|z(s)\| ds
\end{align*}
The Gronwall's inequality allows to conclude.

\bibliography{games}

\end{document}